\documentclass[12pt]{article}
\usepackage{amssymb,amsfonts,amsmath,amsthm}
\usepackage{mathrsfs}
\usepackage{mathtools}
\usepackage{color}
\usepackage{graphicx}
\usepackage{float}
\usepackage{placeins}
\usepackage[ruled,vlined]{algorithm2e}
\usepackage[colorlinks = true,
linkcolor = red,
urlcolor  = red,
citecolor = blue,
anchorcolor = red]{hyperref}
\numberwithin{equation}{section}

\newtheorem{theorem}{Theorem}[section]
\newtheorem{lemma}[theorem]{Lemma}

\usepackage{soul}
\usepackage{xcolor}
\sethlcolor{pink}
\begin{document}
\title{\bf Stability and Finite-Time Blow-Up for a Fractionally Damped Nonlinear Plate Equation: Numerical and Analytical Insights}
	\author{
		\small{\bf Iqra Kanwal}
        \footnote{Corresponding author. Email: ikanwal1703@gmail.com} \\
		\small School of Mathematics and Statistics, Shanxi University, Taiyuan, China \\
		\small {\bf Jianghao Hao}  \footnote{ Email: hjhao@sxu.edu.cn }\\
		\small School of Mathematics and Statistics, Shanxi University, Taiyuan, China,\\
        \small{\bf Muhammad Fahim Aslam}
        \footnote {Email: fahim.sihaab@gmail.com}\\
		\small Department of Mathematics, University of Kotli Azad Jammu and Kashmir(UOKAJK),Kotli, Pakistan\\
         \small{\bf Mauricio Sepúlveda-Cortés}
        \footnote {Email: maursepu@udec.cl}\\
		\small DIM and CI$^2$MA, 
        Universidad de Concepci\'on, Concepci\'on, Chile
       }
	\date{}
	\date{}
	\maketitle
	\begin{abstract}
This paper studies a nonlinear plate equation with internal fractional damping and a time-delay term, driven by a polynomial-type nonlinear source. Such a model arises naturally in the description of viscoelastic and feedback-controlled elastic structures. We first establish the local existence and uniqueness of weak solutions using semigroup theory. The long-time behavior of solutions is then analyzed by constructing a suitable Lyapunov functional, from which stability and energy decay results are obtained. Moreover, by applying the concavity method, we prove that solutions associated with negative initial energy blow up in finite time. These results highlight the competing effects of fractional damping and delayed feedback on the qualitative behavior of the system. Finally, numerical simulations are presented to confirm the analytical results and to illustrate both stability and blow-up dynamics.
\end{abstract}
\noindent{{\bf Keywords:} Plate equation; Nonlinear Dynamics;
Fractional damping; Blow-up phenomena; General Decay}	
\section{Introduction}
The study of higher-order partial differential equations (PDEs) plays a central role in mathematical physics, with applications in elasticity, engineering, vibrations, nuclear science, and material modeling. Among them, the plate equation is a basic model that describes the oscillations and bending of thin elastic structures. The qualitative behavior of solutions to plate equations depends strongly on the interplay of damping mechanisms, nonlinear source terms, and time delays, which can lead to stability, general decay, or blow-up in finite time. 
In this paper, we investigate the following nonlinear plate equation with internal fractional damping and a delayed feedback term:
\begin{equation}\label{1.1}
\begin{cases}
\mathcal{V}_{tt} + \Delta^2 \mathcal{V}+\partial_{t}^{\theta,\vartheta} \mathcal{V}(x,t)+a_1\mathcal{V}_{t}+a_2\mathcal{V}_{t}(x,t-s) = \mathcal{V}|\mathcal{V}|^{p-2}, & (x,t) \in \Omega \times (0,\infty), \\
\mathcal{V}(x,0) = \mathcal{V}_{0}(x), \quad \mathcal{V}_{t}(x,0) = \mathcal{V}_{1}(x), & x \in \Omega, \\
\mathcal{V}=\dfrac{\partial \mathcal{V}}{\partial \nu}=0, & (x,t) \in \partial\Omega \times (0,\infty), \\
\mathcal{V}_{t}(x,t-s) =f_{0}(x,t-s), & x \in \Omega, \; t \in(0,s),
\end{cases}
\end{equation}
where $\Omega \subset \mathbb{R}^n$ is a bounded domain having a smooth boundary. $\partial \Omega$, $a_1, a_2 > 0$ are constants. We investigate two distinct regimes: assumption (A1), leading to stability and decay, and assumption (A2), associated with delay-dominated dynamics and blow-up, where as  $p>2$ characterizes the power of the nonlinear source term. The operator $\partial_{t}^{\theta,\vartheta}$ with $0<\theta<1$ and $\vartheta>0$ denotes the generalized Caputo fractional derivative \cite{BCA,CM}:  
\[
\partial_{t}^{\theta, \vartheta} \mathcal{V}(t)=\frac{1}{\Gamma(1-\theta)} \int_{0}^{t}(t-s)^{-\theta} e^{-\vartheta(t-s)} \mathcal{V}_{s}(s)\, d s.
\]
\medskip
\noindent
\textbf{Motivation.}  
The nonlinear source term $ \mathcal{V}|\mathcal{V}|^{p-2}$ ($p>2$) is a classical power-type nonlinearity that frequently appears in fluid mechanics, nonlinear acoustics, structural vibrations, and elasticity. It is well known that such nonlinearities can act as destabilizing forces, leading to finite-time blow-up when $E(0)<0$ \cite{BJM}.    
In the present work, we consider a polynomial source term of the form $\mathcal{V}|\mathcal{V}|^{p-2}$ with $p>2$ instead of logarithmic nonlinearities. There has been several researchers considering this type of nonlinearity. For this we recommend \cite{Boulaaras2020,AB,Kirane2025,Kirane2003,Peng2025,Safsaf2024}.There are several motivations for this choice.  

First, power-type nonlinearities are classical in the theory of nonlinear PDEs and arise naturally in diverse physical models such as nonlinear elasticity, acoustics, fluid dynamics, and wave propagation. They also provide a flexible framework since the exponent $p$ determines the growth rate of the source term and influences stability and blow-up thresholds.  

Second, polynomial nonlinearities are mathematically tractable, as they align with Sobolev embeddings and variational structures. This makes it possible to apply powerful tools such as energy methods, Sobolev inequalities, and the concavity method. In contrast, logarithmic nonlinearities exhibit weaker growth and are more restrictive in physical applications.  

Finally, the polynomial case represents a stronger destabilizing mechanism compared to the logarithmic one. Analyzing this case allows us to obtain sharper insights into the competition between the dissipative effects of fractional damping and delay feedback versus the destabilizing influence of nonlinear sources. Once the polynomial case is well understood, extensions to logarithmic or other nonlinearities can be pursued as natural generalizations.  

The use of \emph{fractional damping} is motivated by the modeling of viscoelastic and complex materials, where memory and hereditary properties are intrinsic. Classical integer-order damping fails to fully capture such effects, while fractional operators provide a more realistic description \cite{VM,MC,MAG}. For instance, fractional damping has been successfully applied in modeling polymers, biological tissues, heat conduction in heterogeneous media, and anomalous diffusion processes \cite{MAT,AB,BB,BS,FH,AG5,AS,TB,FA1}. In the context of PDEs, fractional damping not only enhances the accuracy of physical models but also fundamentally changes stability and decay behavior.  

The \emph{time-delay term} represents another key physical phenomenon. Delays naturally occur in systems involving feedback, control, and signal transmission. For example, in mechanical structures, delays may result from sensing and actuation lag; in control theory, feedback delays can destabilize otherwise stable systems; and in engineering applications, delays are inevitable in communication or signal processing. Such delays are known to significantly influence solution behavior, possibly generating oscillations, instabilities, or blow-up phenomena \cite{KM,AO,MU,Mk,NP}.  

Thus, the combination of fractional damping, delayed velocity feedback, and nonlinear source terms provides a rich and realistic mathematical framework for modeling physical systems in viscoelasticity, structural engineering, and control theory. At the same time, it raises challenging analytical questions about well-posedness, stability and blow-up dynamics.\\
\medskip
\noindent
\textbf{Related work.}  
Classical plate models with viscoelastic damping, strong damping, or boundary dissipation have been extensively studied, yielding results on global existence, energy decay, and stabilization and blow up \cite{MU,Mk,Lj,MR,kom,MS,MU1,CD,AG4,DA}. More recently, the introduction of fractional damping in wave and plate equations has attracted significant attention, with works proving well-posedness, general decay, and blow-up phenomena \cite{AB,BB,BS}. However, most of these studies have considered either fractional damping or delay terms separately. The simultaneous presence of fractional-type damping, delay feedback and nonlinear sources has not yet been systematically analyzed in the context of plate equations.  

\medskip
\noindent
\textbf{Our contribution.}  
To the best of our knowledge, this is the first work to address equation \eqref{1.1}, where fractional damping and a delayed velocity feedback act together with a nonlinear source term of power type. Our main contributions are:  
\begin{itemize}
    \item Semigroup theory is used to establish the local existence and uniqueness of weak solutions.
    \item \textit{To prove exponential stability, a Lyapunov functional is utilized}.
    \item Proving that solutions with \textit{negative initial energy blow up in finite time}, via a Lyapunov functional and the concavity method.  
    \item Providing \textit{numerical simulations} that capture the exponential stability as well as  blow-up dynamics and confirm the theoretical findings.  
\end{itemize}
\medskip
\noindent
\textbf{Organization.}  
The paper is structured as follows: In Section~2, we reformulate the problem into an augmented system. Section~3 proves local well-posedness. Sections~4 and 5 establish global existence and energy decay results respectively. Section~6 provides blow-up results in the negative energy case. Section~7 illustrates the theoretical results with numerical simulations.
\section{ Preliminaries}
This section focuses on transforming the problem \eqref{1.1} into an augmented system. The following claims are required to accomplish this.
\begin{lemma}\label{lem1}\textnormal{\cite{MBO}}
Let $\beta$ be the function that we will use to represent the damping effect for $\alpha \in \mathbb{R}   \ \textit{on interval} \ 0<\theta<$1 and this function is defined as follows:
$$
\beta(\alpha)=|\alpha|^{\frac{(2 \theta-1)}{2}}.
$$
Thus, the relationship between the 'input' \( U \) and the 'output' \( O \) of the system is described as follows:
\begin{equation}\label{eq2.1}
\left\{\begin{array}{l}
\mathscr{G}_{t}(x, \alpha, t)+\left(\alpha^{2}+\vartheta\right) \mathscr{G}(x, \alpha, t)-U(x, t) \beta(\alpha)=0 \quad \alpha \in \mathbb{R}, \ t>0, \ \vartheta>0\\
\mathscr{G}(x, \alpha, 0)=0 \\
O(t):=(\pi)^{-1} \sin (\theta \pi) \int_{-\infty}^{+\infty} \mathscr{G}(x, \alpha, t) \beta(\alpha) \, d \alpha
\end{array}\right.
\end{equation}
is given by
$$
O:=I^{1-\theta, \vartheta} U,
$$
where
$$
I^{\theta, \vartheta}\mathcal{V}(t):=\frac{1}{\Gamma(\theta)} \int_{0}^{t}(t-s)^{\theta-1} e^{-\vartheta(t-s)} \mathcal{V}(s) d s.
$$
\end{lemma}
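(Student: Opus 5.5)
The plan is a direct computation: solve the linear ODE in \eqref{eq2.1} explicitly for each fixed $\alpha$, substitute into the definition of $O$, and exchange the order of integration. The problem then reduces to a single Gamma-function integral in $\alpha$.

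\textbf{Step 1: solve the ODE.} Since $\mathscr{G}(x,\alpha,0)=0$, variation of constants gives
\[
\mathscr{G}(x,\alpha,t)=\beta(\alpha)\int_0^t e^{-(\alpha^2+\vartheta)(t-s)}U(x,s)\,ds .
\]

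\textbf{Step 2: insert into $O$ and swap integrals.} Substituting into the formula for $O$ and applying Fubini (justified below), we get
\[
O(t)=\frac{\sin(\theta\pi)}{\pi}\int_0^t e^{-\vartheta(t-s)}U(x,s)\left(\int_{-\infty}^{+\infty}|\alpha|^{2\theta-1}e^{-\alpha^2(t-s)}\,d\alpha\right)ds .
\]

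\textbf{Step 3: evaluate the inner integral.} Set $\tau=t-s>0$. Using evenness and the substitution $\xi=\alpha^2\tau$,
\[
\int_{-\infty}^{+\infty}|\alpha|^{2\theta-1}e^{-\alpha^2\tau}\,d\alpha
=2\int_0^\infty\alpha^{2\theta-1}e^{-\alpha^2\tau}\,d\alpha
=\tau^{-\theta}\Gamma(\theta).
\]
This integral converges precisely because $0<\theta<1$: the integrand is integrable at $\alpha=0$ since $2\theta-1>-1$, and the Gaussian factor handles infinity.

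\textbf{Step 4: apply the reflection formula.} Since $\Gamma(\theta)\Gamma(1-\theta)=\pi/\sin(\theta\pi)$, we have $\frac{\sin(\theta\pi)}{\pi}\Gamma(\theta)=\frac{1}{\Gamma(1-\theta)}$. Hence
\[
O(t)=\frac{1}{\Gamma(1-\theta)}\int_0^t (t-s)^{-\theta}e^{-\vartheta(t-s)}U(x,s)\,ds=I^{1-\theta,\vartheta}U(t).
\]

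\textbf{Main obstacle: justifying the exchange of integrals in Step 2.} I would argue by Tonelli applied to the absolute values. After integrating in $\alpha$, the kernel becomes $\Gamma(\theta)(t-s)^{-\theta}$, which is integrable on $(0,t)$ since $\theta<1$. So for $U\in L^1_{loc}$, or at least $U$ continuous, which is all that is needed in the application, the double integral converges absolutely and Fubini applies. Everything else is routine calculus. Applied with $U=\mathcal{V}_t$, this identifies $O$ with $\partial_t^{\theta,\vartheta}\mathcal{V}$.
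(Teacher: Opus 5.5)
Your proof is correct and is essentially the standard argument of Mbodje \cite{MBO}; the paper gives no proof of its own and simply cites that reference. The argument runs: variation of constants, interchange of the $\alpha$- and $s$-integrals, the evaluation $\int_{\mathbb{R}}|\alpha|^{2\theta-1}e^{-\alpha^{2}\tau}\,d\alpha=\Gamma(\theta)\tau^{-\theta}$, and the reflection formula $\Gamma(\theta)\Gamma(1-\theta)=\pi/\sin(\theta\pi)$. One small imprecision is in your Fubini remark: for merely $U\in L^{1}_{loc}$, $\int_0^t (t-s)^{-\theta}|U(s)|\,ds$ is finite only for a.e.\ $t$, so in that generality the identity holds a.e.\ in $t$, and for every $t$ when $U$ is continuous or bounded.
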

\begin{lemma}\label{lem2}\textnormal{\cite{BB}}
If $\left.\lambda \in D_{\vartheta}=\mathbb{C} \backslash\right]-\infty,-\vartheta]$ then
$$
\int_{-\infty}^{+\infty} \frac{\beta^{2}(\alpha)}{\lambda+\vartheta+\alpha^{2}} d \alpha=\frac{\pi}{\sin (\theta \pi)}(\lambda+\vartheta)^{\theta-1}.
$$
\end{lemma}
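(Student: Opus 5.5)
The statement is Lemma~\ref{lem2}: we need the integral $\int_{-\infty}^{+\infty}\frac{|\alpha|^{2\theta-1}}{\lambda+\vartheta+\alpha^{2}}\,d\alpha$ for $\lambda\in D_\vartheta$. The plan is to evaluate it first for real $\mu:=\lambda+\vartheta>0$ and then extend to all of $D_\vartheta$ by analytic continuation.

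\emph{The real case.} Since $\beta^2(\alpha)=|\alpha|^{2\theta-1}$, the integrand is even, so the integral equals $2\int_0^\infty \frac{\alpha^{2\theta-1}}{\mu+\alpha^2}\,d\alpha$. The substitution $u=\alpha^2$ gives $d\alpha = \tfrac12 u^{-1/2}du$ and $\alpha^{2\theta-1}=u^{\theta-1/2}$, so the integral becomes
\begin{equation*}
\int_0^\infty \frac{u^{\theta-1}}{\mu+u}\,du .
\end{equation*}
Now scale $u=\mu v$ to get $\mu^{\theta-1}\int_0^\infty \frac{v^{\theta-1}}{1+v}\,dv$. The remaining integral is the classical Beta integral $B(\theta,1-\theta)=\Gamma(\theta)\Gamma(1-\theta)=\pi/\sin(\theta\pi)$, by Euler's reflection formula. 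It converges because $0<\theta<1$: near $0$ the integrand behaves like $v^{\theta-1}$, and at infinity like $v^{\theta-2}$. This gives the claimed formula for real $\mu>0$.

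\emph{Extension to $D_\vartheta$.} Both sides are holomorphic in $\lambda$ on $D_\vartheta=\mathbb C\setminus]-\infty,-\vartheta]$.
\begin{itemize}
\item The right side is holomorphic when $(\lambda+\vartheta)^{\theta-1}$ is taken with the principal branch, since $\lambda+\vartheta$ then ranges over $\mathbb C\setminus]-\infty,0]$.
\item For the left side, when $\lambda+\vartheta\notin]-\infty,0]$ the denominator $\lambda+\vartheta+\alpha^2$ never vanishes for real $\alpha$. On compact subsets $K$ of $D_\vartheta$, $|\lambda+\vartheta+\alpha^2|$ is bounded below by $c_K(1+\alpha^2)$ with $c_K>0$. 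This uniform bound justifies differentiation under the integral (Morera or dominated convergence), so the left side is holomorphic.
\end{itemize}
The two sides agree on the half-line $\lambda>-\vartheta$, which has accumulation points in the connected domain $D_\vartheta$. By the identity theorem they agree on all of $D_\vartheta$.

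\emph{Main obstacle.} The only delicate step is the uniform lower bound $|\mu+\alpha^2|\ge c_K(1+\alpha^2)$ for $\mu$ in a compact subset of $\mathbb C\setminus]-\infty,0]$. It follows from a continuity/compactness argument on the function $(\mu,t)\mapsto|\mu+t|/(1+t)$, $t\in[0,\infty]$. This function is continuous, and it is nonvanishing because $\mu+t=0$ would force $\mu=-t\le 0$, which is excluded. Its minimum over the compact set $K\times[0,\infty]$ is therefore positive, and that minimum serves as $c_K$.
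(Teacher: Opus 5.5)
Your proof is correct. The paper gives no argument for this lemma; it only quotes it from \cite{BB}. So there is no internal proof to compare with, and your proposal supplies a complete, self-contained derivation.

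The real-variable computation is right: evenness, the substitution $u=\alpha^2$, the scaling $u=\mu v$, and $B(\theta,1-\theta)=\Gamma(\theta)\Gamma(1-\theta)=\pi/\sin(\theta\pi)$. The continuation step is also sound. Your compactness bound $|\mu+\alpha^2|\ge c_K(1+\alpha^2)$ gives the majorant $c_K^{-1}|\alpha|^{2\theta-1}(1+\alpha^2)^{-1}$, which is integrable and locally uniform in $\lambda$, and that is all holomorphy under the integral requires. Moreover $D_\vartheta$ is connected and contains the half-line $]-\vartheta,\infty[$, so the identity theorem applies.

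Compared with the bare citation, your version makes two points explicit. First, $(\lambda+\vartheta)^{\theta-1}$ must be read with the principal branch. Second, $0<\theta<1$ is exactly what makes the integral converge at $0$ and at $\infty$.

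The paper only ever uses the lemma at real $\lambda\ge 0$:
\begin{itemize}
\item at $\lambda=0$, to evaluate $A_0$ in \eqref{def.A_0}, which gives $bA_0=\vartheta^{\theta-1}$;
\item at $\lambda=1$, for the constant $A_1$ in the well-posedness proof.
\end{itemize}
Your first step alone therefore covers every use made of the lemma here. The analytic continuation is needed only for the statement in its full complex form.
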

We assume the following conditions on the damping and delay coefficients:
\begin{equation}
\tag{A1}
a_1 > a_2 + 2 b A_0,
\end{equation}
Now, like in \cite{NP}, we introduce the new variable
\begin{equation}\label{eq2.3}
z(x, \varrho, t)=\mathcal{V}_{t}(x, t-s \varrho), \quad x \in \Omega, \quad \varrho \in(0,1), \quad t>0.
\end{equation}
Then, we have
\begin{equation}\label{eq2.4}
s z_{t}(x, \varrho, t)+z_{\varrho}(x, \varrho, t)=0, \quad x \in \Omega, \quad \varrho \in(0,1), \quad t>0
\end{equation}
Therefore, by \eqref{eq2.3}-\eqref{eq2.4} and using lemma \ref{lem1}, problem \eqref{1.1} is equivalent to
\begin{equation}\label{2..1}
\begin{cases}
\mathcal{V}_{tt} + \Delta^2 \mathcal{V} +b\int_{-\infty}^{+\infty}\mathscr{G}(x,\alpha,t)\beta(\alpha)d\alpha+a_1\mathcal{V}_{t}+a_2z(x,1,t)= \mathcal{V}|\mathcal{V}|^{p-2} & \text{in } \Omega \times (0,\infty), \\
\mathscr{G}_{t}(x, \alpha, t)+\left(\alpha^{2}+\vartheta\right) \mathscr{G}(x, \alpha, t)-z(x, 1, t) \beta(\alpha)=0 & x \in \Omega, \alpha \in \mathbb{R}, \mathrm{t}>0, \\ 
s z_{t}(x, \varrho, t)+z_{\varrho}(x, \varrho, t)=0 & x \in \Omega, \varrho \in(0,1), \mathrm{t}>0, \\ 
\mathcal{V}=\frac{\partial \mathcal{V}}{\partial \nu}=0 & \text{on } \partial\Omega \times (0,\infty), \\ 
z(x, 0, t)=\mathcal{V}_{t}(x, t) & x \in \Omega, t>0, \\
\mathcal{V}(x, 0)=\mathcal{V}_{0}(x), \quad \mathcal{V}_{t}(x, 0)=\mathcal{V}_{1}(x) & x \in \Omega, \\ 
\mathscr{G}(x, \alpha, 0)=0 & x \in \Omega, \alpha \in \mathbb{R}, \\
z(x, \varrho, 0)=f_{0}(x,-\varrho s) & x \in \Omega, \varrho \in(0,1),
\end{cases}
\end{equation}
where $b:=(\pi)^{-1} \sin (\theta \pi)$.\\
\begin{lemma}\label{lem3}
For $z \in L^{2}(\Omega)$ and $\alpha \mathscr{G} \in L^{2}(\Omega \times(-\infty,+\infty))$, we have
$$
\begin{aligned}
\left|\int_{\Omega} z(x, \varrho, t) \int_{-\infty}^{+\infty} \beta(\alpha) \mathscr{G}(x, \alpha, t) d \alpha d x\right| &\leq  A_{0} \int_{\Omega}|z(x, \varrho, t)|^{2} d x \\
& +\frac{1}{4} \int_{\Omega} \int_{-\infty}^{+\infty}\left(\alpha^{2}+\vartheta\right)|\mathscr{G}(x, \alpha, t)|^{2} d \alpha d x
\end{aligned}
$$
 where $A_{0}$ is a positive constant and can be obtained by definition of Lemma \ref{lem2}.
\end{lemma}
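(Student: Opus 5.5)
The plan is a pointwise Cauchy--Schwarz in $\alpha$, followed by Young's inequality in $x$; Lemma \ref{lem2} supplies the constant. Fix $x\in\Omega$ and $t>0$. First insert the weight $(\alpha^2+\vartheta)$:
\[
\left|\int_{-\infty}^{+\infty}\beta(\alpha)\mathscr{G}(x,\alpha,t)\,d\alpha\right|
=\left|\int_{-\infty}^{+\infty}\frac{\beta(\alpha)}{\sqrt{\alpha^2+\vartheta}}\,\sqrt{\alpha^2+\vartheta}\,\mathscr{G}(x,\alpha,t)\,d\alpha\right|.
\]
Cauchy--Schwarz in $\alpha$ then bounds this by
\[
\left(\int_{-\infty}^{+\infty}\frac{\beta^2(\alpha)}{\alpha^2+\vartheta}\,d\alpha\right)^{1/2}\left(\int_{-\infty}^{+\infty}(\alpha^2+\vartheta)|\mathscr{G}(x,\alpha,t)|^2\,d\alpha\right)^{1/2}.
\]
The first factor is finite and explicit. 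Lemma \ref{lem2} with $\lambda=0\in D_\vartheta$ (valid since $\vartheta>0$) gives
\[
\int_{-\infty}^{+\infty}\frac{\beta^2(\alpha)}{\alpha^2+\vartheta}\,d\alpha=\frac{\pi}{\sin(\theta\pi)}\,\vartheta^{\theta-1}=:M.
\]
Convergence is not an issue: near $0$ the integrand is $|\alpha|^{2\theta-1}/\vartheta$, which is integrable because $\theta>0$, and at infinity it behaves like $|\alpha|^{2\theta-3}$, which is integrable because $\theta<1$.

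Next, multiply by $|z(x,\varrho,t)|$, integrate over $\Omega$, and apply Young's inequality $ab\le a^2+\tfrac14 b^2$ with
\[
a=\sqrt{M}\,|z|,\qquad b=\left(\int_{-\infty}^{+\infty}(\alpha^2+\vartheta)|\mathscr{G}|^2\,d\alpha\right)^{1/2}.
\]
This produces exactly the stated inequality with
\[
A_0=M=\frac{\pi}{\sin(\theta\pi)}\,\vartheta^{\theta-1}.
\]
Here $A_0$ is positive and depends only on $\theta$ and $\vartheta$. If one keeps the coefficient $b=\pi^{-1}\sin(\theta\pi)$ from \eqref{2..1} inside the term, the constant becomes $b\,\vartheta^{\theta-1}$ instead; either way it is obtained from Lemma \ref{lem2}.

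The only point that needs care is justifying the manipulations under the stated hypotheses. The assumption $\alpha\mathscr{G}\in L^2$, together with $\vartheta>0$, is what makes $\int\!\int(\alpha^2+\vartheta)|\mathscr{G}|^2$ finite; this is the natural norm of the $\mathscr{G}$-component, and I would read the hypothesis as including $\mathscr{G}\in L^2$ as well. The pointwise bound shows that $x\mapsto\int\beta\mathscr{G}\,d\alpha$ lies in $L^2(\Omega)$, so the left-hand side is well defined and Fubini/Tonelli apply. I expect no genuine obstacle beyond checking that $\lambda=0$ is an admissible choice in Lemma \ref{lem2}.
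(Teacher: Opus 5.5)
Your proof is correct and follows the paper's argument: Cauchy--Schwarz in $\alpha$ with the weight $(\alpha^2+\vartheta)$, then Young's inequality $ab\le a^2+\frac14 b^2$ with $A_0=\int_{-\infty}^{+\infty}\beta^2(\alpha)/(\alpha^2+\vartheta)\,d\alpha$. Your explicit value $A_0=\frac{\pi}{\sin(\theta\pi)}\vartheta^{\theta-1}$ from Lemma~\ref{lem2} at $\lambda=0$ is a correct detail the paper leaves implicit, but your side remark has a slip: multiplying by $b=\pi^{-1}\sin(\theta\pi)$ gives $bA_0=\vartheta^{\theta-1}$, not $b\,\vartheta^{\theta-1}$.
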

\begin{proof}
Using the Cauchy-Schwarz inequality, we obtain
$$
\left|\int_{-\infty}^{+\infty} \beta(\alpha) \mathscr{G}(x, \alpha, t) d \alpha\right| \leq\left(\int_{-\infty}^{+\infty} \frac{\beta^{2}(\alpha)}{\alpha^{2}+\vartheta} d \alpha\right)^{\frac{1}{2}}\left(\int_{-\infty}^{+\infty}\left(\alpha^{2}+\vartheta\right)|\mathscr{G}(x, \alpha, t)|^{2} d \alpha\right)^{\frac{1}{2}}.
$$
with Young's inequality, we obtain
$$
\begin{aligned}
\left|\int_{\Omega} z(x, \varrho, t) \int_{-\infty}^{+\infty} \beta(\alpha) \mathscr{G}(x, \alpha, t) d \alpha d x\right| &\leq  A_{0} \int_{\Omega}|z(x, \varrho, t)|^{2} d x \\
& +\frac{1}{4} \int_{\Omega} \int_{-\infty}^{+\infty}\left(\alpha^{2}+\vartheta\right)|\mathscr{G}(x, \alpha, t)|^{2} d \alpha d x
\end{aligned}
$$
with
\begin{equation}\label{def.A_0}
A_{0}=\int_{-\infty}^{+\infty} \frac{\beta^{2}(\alpha)}{\alpha^{2}+\vartheta} d \alpha .
\end{equation}
This ends the proof.
\end{proof}
The energy associated with the problem \eqref{2..1} is 
\begin{equation}\label{eq2.5}
\begin{aligned}
E(t)& = \frac{1}{2}\left\|\mathcal{V}_{t}\right\|_{2}^{2}+\frac{b}{2} \int_{\Omega} \int_{-\infty}^{+\infty}|\mathscr{G}(x, \alpha, t)|^{2} d \alpha d x+\frac{1}{2}\|\Delta \mathcal{V}\|_{2}^{2} \\
 &-\frac{1}{p}\|\mathcal{V}\|_{p}^{p}+v s \int_{\Omega} \int_{0}^{1}|z(x, \varrho, t)|^{2} d \varrho d x, 
\end{aligned}
\end{equation}
where $v$ is a positive constant that verifies
\begin{equation}\label{eq2.6}
\frac{a_{2}}{2}+b A_{0}<v<a_1-b A_{0}-\frac{a_{2}}{2},
\end{equation}
which is possible provided \textbf{(A1)}.
\begin{lemma}\label{lem4}
Assume that condition \eqref{eq2.6} is feasible. For the exponent $p$, one requires
\begin{equation}\label{eq2.7}
    \begin{cases}
        2 < p < \infty, & \text{when } n=1 \text{ or } n=2, \\[6pt]
        2 < p \leq \dfrac{2n}{n-2}, & \text{when } n \geq 3 .
    \end{cases}
\end{equation}
Then energy 
$E(t)$ in \eqref{eq2.5} satisfies the decay estimate
\begin{equation}\label{eq2.8}
\begin{aligned}
\frac{d E(t)}{d t} \leq & -C \int_{\Omega} \left( |z(x, 0, t)|^{2} + |z(x, 1, t)|^{2} \right) dx - \frac{b}{2} \int_{\Omega} \int_{-\infty}^{+\infty} \left( \vartheta + \alpha^{2} \right) |\mathscr{G}(x, \alpha, t)|^{2} , d\alpha  dx,
\end{aligned}
\end{equation}
where 
C is a positive constant.
\end{lemma}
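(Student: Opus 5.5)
Differentiate each term of $E(t)$ in \eqref{eq2.5} along smooth solutions of \eqref{2..1} and then control the indefinite cross terms with Lemma \ref{lem3} and Young's inequality; the constraint \eqref{eq2.6} is exactly what makes the coefficients of the boundary traces $z(x,0,t)=\mathcal{V}_t$ and $z(x,1,t)$ negative. The exponent range \eqref{eq2.7} is used only to ensure $H_0^2(\Omega)\hookrightarrow L^p(\Omega)$, so that $\|\mathcal{V}\|_p^p$ and its time derivative $\int_\Omega |\mathcal{V}|^{p-2}\mathcal{V}\mathcal{V}_t\,dx$ are well defined and finite for solutions in the energy space.

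\emph{Step 1: the plate equation.} Multiply the first equation of \eqref{2..1} by $\mathcal{V}_t$ and integrate over $\Omega$. The clamped conditions $\mathcal{V}=\partial_\nu\mathcal{V}=0$ give
\[
\frac{d}{dt}\Big(\tfrac12\|\mathcal{V}_t\|_2^2+\tfrac12\|\Delta\mathcal{V}\|_2^2-\tfrac1p\|\mathcal{V}\|_p^p\Big)
=-a_1\|\mathcal{V}_t\|_2^2-a_2\int_\Omega \mathcal{V}_t z(x,1,t)\,dx-b\int_\Omega \mathcal{V}_t\int_{-\infty}^{+\infty}\beta(\alpha)\mathscr{G}\,d\alpha\,dx .
\]

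\emph{Step 2: the auxiliary variables.} Multiply the $\mathscr{G}$-equation by $b\mathscr{G}$ and integrate over $\Omega\times\mathbb{R}$:
\[
\frac{b}{2}\frac{d}{dt}\int_\Omega\int_{\mathbb{R}}|\mathscr{G}|^2=-b\int_\Omega\int_{\mathbb{R}}(\alpha^2+\vartheta)|\mathscr{G}|^2+b\int_\Omega z(x,1,t)\int_{\mathbb{R}}\beta\mathscr{G}\,d\alpha\,dx .
\]
Multiply the transport equation \eqref{eq2.4} by $2vz$ and integrate over $\Omega\times(0,1)$:
\[
\frac{d}{dt}\Big(vs\int_\Omega\int_0^1 z^2\Big)=-v\int_\Omega\big(z^2(x,1,t)-z^2(x,0,t)\big)dx .
\]
The system couples $\mathscr{G}$ to $z(\cdot,1,\cdot)$, while the plate equation contains $\mathcal{V}_t=z(\cdot,0,\cdot)$, so there are two mixed terms $b\int(z(1)-z(0))\int\beta\mathscr{G}$. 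Summing Steps 1 and 2 gives $E'(t)$ as a sum of the diagonal negative terms and these cross terms.

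\emph{Step 3: estimates.} Young's inequality gives $|a_2\int z(0)z(1)|\le \frac{a_2}{2}\int(z(0)^2+z(1)^2)$. Apply Lemma \ref{lem3} separately with $\varrho=0$ and $\varrho=1$; each application costs $bA_0\int z^2$ plus $\frac b4\int\!\int(\alpha^2+\vartheta)|\mathscr{G}|^2$. The two $\mathscr{G}$ costs together absorb half of $-b\int\!\int(\alpha^2+\vartheta)|\mathscr{G}|^2$, leaving the $-\frac b2$ term in \eqref{eq2.8}. The resulting coefficients are
\[
\text{of }\int z(0)^2:\; -(a_1-v-\tfrac{a_2}{2}-bA_0),\qquad \text{of }\int z(1)^2:\; -(v-\tfrac{a_2}{2}-bA_0).
\]
Both are strictly negative by \eqref{eq2.6}. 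Taking $C$ as the smaller of these two quantities yields \eqref{eq2.8}.

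\emph{Main obstacle.} The delicate point is the bookkeeping of the cross terms, in particular the coupling of the fractional variable to $z(x,1,t)$ rather than to $\mathcal{V}_t$. This forces two uses of Lemma \ref{lem3}, so the $\frac14$ weights must be tracked carefully to land exactly on the $\frac b2$ factor. The computation is done first for strong solutions, where differentiating under the integrals is justified; the inequality then extends to weak solutions by density, using the semigroup well-posedness.
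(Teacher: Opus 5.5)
Your proposal is correct and follows the paper's proof essentially step for step. It uses the same three multipliers ($\mathcal{V}_t$, $b\mathscr{G}$, $2vz$), Young's inequality on the delay cross term, and two applications of Lemma~\ref{lem3} (at $\varrho=0$ and $\varrho=1$), and so arrives at exactly the coefficients in \eqref{eq2.12} and the constant $C$ of \eqref{eq2.13}. Your remarks on the role of \eqref{eq2.7} and the density passage to weak solutions are harmless additions the paper leaves implicit; the paper's appeal to Poincar\'e's inequality at this step is in fact unnecessary.
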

\begin{proof}
The first equation of \eqref{2..1} is multiplied by $\mathcal{V}_{t}$, integrated over $\Omega$, and then integrated by parts, yields
\begin{equation}\label{eq2.9}
\begin{aligned}
 \frac{d}{d t}&\left\{\frac{1}{2}\left\|\mathcal{V}_{t}\right\|_{2}^{2}+\frac{1}{2}\|\Delta \mathcal{V}\|_{2}^{2}-\frac{1}{p}\|\mathcal{V}\|_{p}^{p}\right\}\\ &+a_{1}\left\|\mathcal{V}_{t}\right\|_{2}^{2}
+a_{2}\int_{\Omega}\mathcal{V}_{t}z(x,1,t) \\ & +b \int_{\Omega} \mathcal{V}_{t} \int_{-\infty}^{+\infty} \beta(\alpha) \mathscr{G}(x, \alpha, t) d \alpha d x=0.
\end{aligned}
\end{equation}
When we multiply  \eqref{2..1}$_2$ by $b \mathscr{G}$ and integrate over $\Omega \times(-\infty,+\infty)$, we get:
\begin{equation}\label{eq2.10}
\begin{aligned}
 \frac{d}{d t}&\left\{\frac{b}{2} \int_{\Omega} \int_{-\infty}^{+\infty} |\mathscr{G}(x, \alpha, t)|^{2} d \alpha d x\right\} \\
&-b \int_{\Omega} z(x, 1, t) \int_{-\infty}^{+\infty} \beta(\alpha) \mathscr{G}(x, \alpha, t) d \alpha d x
\\
&+b \int_{\Omega} \int_{-\infty}^{+\infty}\left(\alpha^{2}+\vartheta\right)|\mathscr{G}(x, \alpha, t)|^{2} d \alpha d x=0.
\end{aligned}
\end{equation}
Multiply  \eqref{2..1}$_3$ by $2 v z$ and integrate over $\Omega \times(0,1)$, to obtain:
\begin{equation}\label{eq2.11}
\begin{aligned}
&\frac{d}{d t}\left\{s v \int_{\Omega} \int_{0}^{1}|z(x, \varrho, t)|^{2} d \varrho d x\right\}
\\ &+v \int_{\Omega}\left[|z(x, 1, t)|^{2}-|z(x, 0, t)|^{2}\right] d x=0.
\end{aligned}
\end{equation}
By adding up \eqref{eq2.9}-\eqref{eq2.11}, using Poincaré and Young's inequality, and using $\mathcal{V}_{t}=z(x, 0, t)$, we get
\begin{equation}\label{eq2.12}
\begin{aligned}
\frac{d E(t)}{d t}\leq&-(v-bA_0-\frac{a_2}{2})\int_{\Omega}|z(x, 1, t)|^{2} d x \\
& -(b-\frac{b}{4}-\frac{b}{4})\int_{\Omega} \int_{-\infty}^{+\infty}\left(\alpha^{2}+\vartheta\right)|\mathscr{G}(x, \alpha, t)|^{2} d \alpha d x\\ & -\left(a_{1}-bA_0-\frac{a_2}{2}-v\right) \int_{\Omega}|z(x, 0, t)|^{2} d x
\end{aligned}
\end{equation}
Under assumption \textbf{(A1)}, condition \eqref{eq2.6} is feasible.
Hence, choosing $v$ accordingly, both coefficients in \eqref{eq2.12}
are strictly positive, which guarantees the existence of a constant
$C>0$ defined in \eqref{eq2.13}.\\
By applying Lemma \ref{lem3}, the following result will be obtained.
\begin{equation}\label{eq13}
\begin{aligned}
\frac{d E(t)}{d t} \leq & -C \int_{\Omega}\left(|z(x, 1, t)|^{2}+|z(x, 0, t)|^{2}\right) d x \\
& -\frac{b}{2} \int_{\Omega} \int_{-\infty}^{+\infty}\left(\alpha^{2}+\vartheta\right)|\mathscr{G}(x, \alpha, t)|^{2} d \alpha d x,
\end{aligned}
\end{equation}
with
\begin{equation}\label{eq2.13}
C=\min \left\{\left(v-b A_{0}-\frac{a_2}{2}\right),\left(a_1-\frac{a_2}{2}-b A_{0}-v\right)\right\}.
\end{equation}
The constant C is positive since $v$ is selected in a way that satisfies assumption \eqref{eq2.6}.  The proof is complete.
\end{proof}
\section{Well-posedness}
By introducing the notation $y=\mathcal{V}_{t}$ and setting 
\[
U=\begin{pmatrix}\mathcal{V}\\ y\\ \mathscr{G}\\ z\end{pmatrix},
\qquad   
J(U)=\begin{pmatrix}0\\ |\mathcal{V}|^{p-2}\mathcal{V}\\ 0\\ 0\end{pmatrix},
\]
the system in \eqref{2..1} can be reformulated in the abstract form
\begin{equation}\label{eq3.1}
\begin{cases}
\dfrac{d}{dt}U(t)+AU(t)=J(U(t)), \\[6pt]
U(0)=\begin{pmatrix}\mathcal{V}_{0}\\ \mathcal{V}_{1}\\ 0\\ f_{0}(-\varrho s)\end{pmatrix}.
\end{cases}
\end{equation}
Here, the operator $A$ is specified in the following manner:
$$
A U=\left(\begin{array}{l}
-y \\
\Delta^2 \mathcal{V}+b \int_{-\infty}^{+\infty} \mathscr{G}(x, \alpha) \beta(\alpha) d \alpha+a_{1}y +a_2z(1,.)\\
\left(\alpha^{2}+\vartheta\right) \mathscr{G}(x, \alpha)-z(x, 1) \beta(\alpha) \\
\frac{1}{s} z_{\varrho}(x, \varrho)
\end{array}\right),
$$
with domain
\[
D(A)=\Big\{\, U\in \mathcal{H}\;:\;
\mathcal{V}\in H^{2}(\Omega),\; y\in H_{0}^{1}(\Omega),\;
z_{\varrho}\in L^{2}(\Omega\times(0,1)),\;
y=z(\cdot,0), 
\]
\[
\alpha\,\mathscr{G}\in L^{2}(\Omega\times\mathbb{R}),\;
(\alpha^{2}+\vartheta)\mathscr{G}-\beta(\alpha)\,z(x,1,t)\in L^{2}(\Omega\times\mathbb{R})
\,\Big\}.
\]
where the space $\mathcal{H}$ is defined by:
$$
\mathcal{H}:=H_{0}^{1}(\Omega) \times L^{2}(\Omega) \times L^{2}(\Omega \times(-\infty,+\infty)) \times L^{2}(\Omega \times(0,1)),
$$
equipped with the inner product
\[
\begin{aligned}
\langle U , \bar{U} \rangle_{\mathcal{H}}
&= \int_{\Omega} \Big( \Delta \mathcal{V}\,\Delta \bar{\mathcal{V}} \;+\; y\,\bar{y} \Big)\,dx \\[4pt]
& +\, b \int_{\Omega} \int_{\mathbb{R}} \mathscr{G}(x,\alpha)\,\overline{\mathscr{G}(x,\alpha)}\, d\alpha\,dx \\[4pt]
& +\, 2\nu s \int_{0}^{1} \int_{\Omega} z(x,\varrho)\,\bar{z}(x,\varrho)\, dx\, d\varrho .
\end{aligned}
\]
\begin{theorem}
Assume \textbf{(A1)} and \eqref{eq2.7}. For any $U_{0} \in \mathcal{H}$, the problem \eqref{eq3.1} has a local unique weak solution.
$$
U \in C([0, T), \mathcal{H}) .
$$
\end{theorem}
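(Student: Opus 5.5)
The plan is the standard one for semilinear evolution equations. We show that $-A$ generates a $C_0$-semigroup of contractions on $\mathcal{H}$, after a bounded shift and after identifying $\mathcal{H}$ with its natural biharmonic version $H_0^2(\Omega)\times L^2(\Omega)\times L^2(\Omega\times\mathbb{R})\times L^2(\Omega\times(0,1))$, since the inner product uses $\Delta\mathcal{V}$. We then show that $J$ is locally Lipschitz on $\mathcal{H}$. Pazy's theorem for locally Lipschitz perturbations of generators then gives a unique mild (weak) solution $U\in C([0,T),\mathcal{H})$ of \eqref{eq3.1}. The proof splits into three steps: dissipativity, maximality, and the Lipschitz estimate on $J$.

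\emph{Step 1 (dissipativity).} For $U\in D(A)$ we compute $\langle AU,U\rangle_{\mathcal{H}}$, repeating at the operator level the calculation that gave \eqref{eq2.9}--\eqref{eq2.12} in the proof of Lemma \ref{lem4}. The terms $-\int\Delta y\,\Delta\mathcal{V}$ and $\int\Delta^2\mathcal{V}\,y$ cancel after integration by parts. The $z$-component gives $\nu\int_\Omega(|z(x,1)|^2-|y|^2)\,dx$, using $z(\cdot,0)=y$. The cross terms $b\int y\int\beta\mathscr{G}$ and $-b\int z(1)\int\beta\mathscr{G}$ are controlled by Lemma \ref{lem3} together with Young's inequality, and $a_2\int y\,z(1)$ by Young's inequality. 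Under (A1), with $\nu$ chosen as in \eqref{eq2.6}, this yields
\[
\langle AU,U\rangle_{\mathcal{H}}\ \ge\ C\int_\Omega\big(|y|^2+|z(x,1)|^2\big)\,dx+\frac b2\int_\Omega\int_{\mathbb{R}}(\alpha^2+\vartheta)|\mathscr{G}|^2\,d\alpha\,dx\ \ge 0 ,
\]
so $A$ is monotone. If the coupling $b\int y\int\beta\mathscr{G}$ cannot be absorbed exactly, we will instead accept $A+\omega I$ monotone for some $\omega\ge 0$, which still suffices for local existence.

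\emph{Step 2 (maximality), which I expect to be the main obstacle.} Given $F=(f_1,f_2,f_3,f_4)\in\mathcal{H}$ and $\lambda>0$, we solve $(\lambda I+A)U=F$.
\begin{itemize}
\item The first equation gives $y=\lambda\mathcal{V}-f_1$.
\item The transport equation with $z(0)=y$ gives $z(\varrho)=y\,e^{-\lambda s\varrho}+s\int_0^\varrho e^{-\lambda s(\varrho-\tau)}f_4(\tau)\,d\tau$, hence $z(1)=y\,e^{-\lambda s}+z_0$ with $z_0$ known.
\item The $\mathscr{G}$-equation gives $\mathscr{G}=\big(f_3+\beta(\alpha)z(1)\big)/(\lambda+\vartheta+\alpha^2)$.
\item By Lemma \ref{lem2}, $\int\beta\mathscr{G}\,d\alpha=\frac{\pi}{\sin\theta\pi}(\lambda+\vartheta)^{\theta-1}z(1)+(\text{known})$.
\end{itemize}
Substituting everything into the second equation reduces the system to the elliptic problem
\[
\Big(\lambda^2+\lambda a_1+\lambda a_2e^{-\lambda s}+\lambda(\lambda+\vartheta)^{\theta-1}e^{-\lambda s}\Big)\mathcal{V}+\Delta^2\mathcal{V}=h\in H^{-2}(\Omega).
\]
Its zero-order coefficient is positive for $\lambda>0$, so the associated bilinear form is coercive on $H_0^2(\Omega)$, and Lax--Milgram gives a unique $\mathcal{V}\in H_0^2(\Omega)$. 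The delicate part is checking that the recovered $U$ actually lies in $D(A)$. We must verify that $\alpha\mathscr{G}\in L^2$ and that $(\alpha^2+\vartheta)\mathscr{G}-\beta z(1)\in L^2$, which follows from the formula for $\mathscr{G}$ together with $\lambda+\vartheta>0$. We must also verify that $\Delta^2\mathcal{V}$ has the regularity required by elliptic theory. The Lumer--Phillips theorem then shows that $-A$ generates a contraction semigroup.

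\emph{Step 3 ($J$ locally Lipschitz).} For $\|U\|_{\mathcal{H}},\|\tilde U\|_{\mathcal{H}}\le R$, the mean value inequality and Hölder give
\[
\big\||\mathcal{V}|^{p-2}\mathcal{V}-|\tilde{\mathcal{V}}|^{p-2}\tilde{\mathcal{V}}\big\|_2\le (p-1)\big(\|\mathcal{V}\|_{2(p-1)}+\|\tilde{\mathcal{V}}\|_{2(p-1)}\big)^{p-2}\|\mathcal{V}-\tilde{\mathcal{V}}\|_{2(p-1)} .
\]
The Sobolev embedding $H_0^2(\Omega)\hookrightarrow L^{2(p-1)}(\Omega)$ holds under \eqref{eq2.7}, which is in fact more than enough since $H^2$ embeds further than $H^1$. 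Hence $\|J(U)-J(\tilde U)\|_{\mathcal{H}}\le C(R)\|U-\tilde U\|_{\mathcal{H}}$. Pazy's local existence theorem then yields a unique mild solution on a maximal interval $[0,T)$, and this is the weak solution claimed in the statement.
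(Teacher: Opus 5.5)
Your proposal is correct and follows the same route as the paper: monotonicity of $A$ from the energy identity under (A1), surjectivity of the resolvent by reducing to a coercive fourth-order elliptic problem solved with Lax--Milgram, local Lipschitz continuity of $J$, and Pazy's theorem. Your version is more careful than the paper's in several places: you work in $H_0^2(\Omega)$ so that it matches the $\Delta$-based inner product, you track the $\lambda$-dependent zero-order coefficient through Lemma \ref{lem2}, and you give an explicit H\"older--Sobolev estimate for $J$ that holds under \eqref{eq2.7}.
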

\begin{proof}
Our aim is to verify that $J$ is locally Lipschitz continuous and that $A$ is a maximal monotone operator, cf. \cite{PZ}.  
From \eqref{eq3.1} and \eqref{eq2.8}, one obtains for every $U \in D(A)$
\begin{equation}
\begin{aligned}
\langle AU , U \rangle_{\mathcal{H}}
&\;\geq\; \frac{b}{2}\int_{\Omega}\!\int_{\mathbb{R}}
\big(\vartheta+|\alpha|^{2}\big)\,|\mathscr{G}(x,\alpha)|^{2}\,d\alpha\,dx  \\[4pt]
&\quad +\, C \int_{\Omega} \Big( |z(x,0)|^{2}+|z(x,1)|^{2} \Big)\,dx .
\end{aligned}
\end{equation}
To conclude maximality, it suffices to establish that the mapping $I+A$ is onto.  
That is, for any element $F=(f_{1},f_{2},f_{3},f_{4})^{T}\in \mathcal{H}$, we must find a vector  
$U=(\mathcal{V},y,\mathscr{G},z)^{T}\in D(A)$ solving
\[
(I+A)U = F .
\]
Then,
\begin{equation}\label{eq3.2}
\left\{\begin{array}{l}
\mathcal{V}-y=f_{1}(x),\\
y+\Delta^2 \mathcal{V}+b \int_{-\infty}^{+\infty} \mathscr{G}(x, \alpha) \beta(\alpha) d \alpha+a_{1}y+a_{2}z(1,.)=f_{2}(x), \\
\mathscr{G}(x, \alpha)+\left(\alpha^{2}+\vartheta\right) \mathscr{G}(x, \alpha)-z(x, 1) \beta(\alpha)=f_{3}(x, \alpha), \\
z(x, \varrho)+\frac{1}{s} z_{\varrho}(x, \varrho)=f_{4}(x, \varrho).
\end{array}\right.
\end{equation}
Assume $\mathcal{V}$ has been obtained with sufficient regularity. Therefore, \eqref{eq3.2}$_{1}$ and \eqref{eq3.2}$_{3}$ gives
\begin{equation}\label{eq3.3}
y=\mathcal{V}-f_{1}
\end{equation}
and
\begin{equation}\label{eq3.4}
\mathscr{G}(x, \alpha)=\frac{f_{3}(x, \alpha)+z(x, 1) \beta(\alpha)}{\alpha^{2}+\vartheta+1} \quad x \in \Omega, \alpha \in \mathbb{R}.
\end{equation}
On the other hand, condition \eqref{eq3.2}$_{4}$ together with 
$z(x,0)=\mathcal{V}-f_{1}$ ensures the existence of a unique solution.  
Consequently, we obtain
\begin{equation}\label{eq3.5}
z(x,\varrho) = \big(\mathcal{V}-f_{1}(x)\big)\,e^{-s\varrho}
+ s\,e^{-s\varrho}\int_{0}^{\varrho} e^{s\tau}\,f_{4}(x,\tau)\,ds,
\qquad x\in\Omega,\;\varrho\in(0,1).
\end{equation}
When we substitute \eqref{eq3.3} and \eqref{eq3.4} into \eqref{eq3.2}$_{2}$, we get
\begin{equation}\label{eq3.6}
\begin{aligned}
& \left(1+a_{1}\right)\mathcal{V} + \Delta^2\mathcal{V}+ a_{2} z(1,.)  \\
& +b \int_{-\infty}^{+\infty} \frac{f_{3}(x, \alpha) + z(x, 1) \beta(\alpha)}{\alpha^{2} + \vartheta + 1} \beta(\alpha) d \alpha 
 \\ & =  f_{2}(x) + \left(1 + a_{1}\right) f_{1}(x).
\end{aligned}
\end{equation}
By using \eqref{eq3.6} and \eqref{eq3.5}, we have
\begin{equation}
\begin{aligned}
(1 +a_1+ b A_1 e^{-s \varrho}+a_2e^{-s}) \mathcal{V} & + \Delta^2 \mathcal{V} = (1+a_1+a_2e^{-s}+ b A_1 e^{-s \varrho})f_1+f_2(x)\nonumber \\
& \quad \quad \quad- b A_1 s e^{-s \varrho} \int_0^{\varrho} e^{s \tau} f_4(x, t)-a_2 s e^{-s} \int_0^{1} e^{s \tau} f_4(x, \tau)\nonumber \\
& \quad \quad \quad +b \int_{-\infty}^{+\infty}\frac{f_{3}(x, \alpha) \beta(\alpha)}{\alpha^{2}+\vartheta+1}.
\end{aligned}
\end{equation}
\begin{equation}\label{eq3.7}
\varrho_{*} \mathcal{V} + \Delta^2 \mathcal{V} = G,
\end{equation}
where
\begin{align*}
\varrho_{*} & = 1 +a_1+ b A_1 e^{-s \varrho}+a_2e^{-s},\\
A_1 & = \int_{-\infty}^{+\infty} \frac{\beta^2(\alpha)}{\alpha^2 + \vartheta + 1} \, d\alpha, \\
G & = (1+a_1+a_2e^{-s}+ b A_1 e^{-s \varrho})f_1+f_2(x)\\
& \quad- b A_1 s e^{-s \varrho} \int_0^{\varrho} e^{s \tau} f_4(x, \tau)-a_2 s e^{-s} \int_0^{1} e^{s \tau} f_4(x, \tau) \\
& \quad+b \int_{-\infty}^{+\infty}\frac{f_{3}(x, \alpha) \beta(\alpha)}{\alpha^{2}+\vartheta+1}.
\end{align*}
As a consequence, equation \eqref{eq3.7} can be reformulated in the variational form
\begin{equation}\label{eq3.8}
B(\mathcal{V},w) = L(w), \qquad \forall\, w \in H_{0}^{1}(\Omega).
\end{equation}
Here, the bilinear mapping 
\[
B : H_{0}^{1}(\Omega)\times H_{0}^{1}(\Omega) \to \mathbb{R}
\]
is given by
\begin{equation}\label{eq3.9}
B(\mathcal{V},w) = \varrho_{*}\int_{\Omega}\mathcal{V}\,w\,dx
+ \bar{\lambda}\int_{\Omega} \Delta \mathcal{V}\,\Delta w\,dx ,
\end{equation}
while the linear functional 
\[
L : H_{0}^{1}(\Omega) \to \mathbb{R}
\]
is defined through
\begin{equation}\label{eq3.10}
L(w) = \int_{\Omega} G\,w\,dx .
\end{equation}
The coercive and continuous natures of $B$ and $L$ are straightforward to demonstrate.  Using the Lax-Milgram theorem, we can deduce that \eqref{eq3.8} has a unique solution $\mathcal{W} \in H_{0}^{1}(\Omega)$ for each $w \in H_{0}^{1}(\Omega)$.  By using the classical elliptic regularity, $\mathcal{W} \in H_{0}^{2}(\Omega)$ can be inferred from \eqref{eq3.8}. 
 Thus, $I+A$ is a surjective operator.\\
Finally, we show that the function $J: \mathcal{H} \rightarrow \mathcal{H}$ is Lipschitz locally. The results from \cite{Kirane2025} make it simple to verify that
\begin{align}
\left \Vert \mathscr{J}(U)-\mathscr{J}(\bar{U})\right \Vert _{H}^{2} &= \left \Vert 0, \mathcal{V}|\mathcal{V}|^{p-2}-\bar{\mathcal{V}}|\bar{\mathcal{V}}|^{p-2},0,0\right \Vert _{H}^{2}\nonumber \\
&= \left \Vert \mathcal{V}|\mathcal{V}|^{p-2}-\bar{\mathcal{V}}|\bar{\mathcal{V}}|^{p-2}\right \Vert _{2}^{2}\nonumber \\
&\leq C \left \Vert \mathcal{V}-\bar{\mathcal{V}}\right \Vert _{H_{0}^{1}(\Omega )}^{2}.
\end{align}
Thus, $\mathscr{J}$ is locally Lipschitz.
\end{proof}
\section{Global existence}\label{S4}
Here, our goal is to demonstrate that there is a solution to the given problem \eqref{2..1} that is globally exist.  We shall first define the functionals below:
\begin{align}
I(t)=&\|\Delta \mathcal{V}(t)\|_{2}^{2}+b \int_{\Omega} \int_{-\infty}^{+\infty}|\mathscr{G}(x,\alpha, t)|^{2} \mathrm{~d} \alpha \mathrm{dx}-\|\mathcal{V}\|_{p}^{p}\nonumber\label{equat4.1}\\
&+v s \int_{\Omega} \int_{0}^{1}|z(x, \varrho, t)|^{2} dx d \varrho,\\
J(t)=&\frac{1}{2}\|\Delta \mathcal{V}(t)\|_{2}^{2}-\frac{1}{p} || \mathcal{V}(t)||_p^p +\frac{b}{2} \int_{\Omega} \int_{-\infty}^{+\infty}|\mathscr{G}(x,\alpha, t)|^{2} \mathrm{~d} \alpha \mathrm{dx}\nonumber \label{equat4.2}\\ 
&+v s \int_{\Omega} \int_{0}^{1}||z(x, \varrho, t)|^{2} dx d \varrho.
\end{align}
We have
\begin{equation}\label{equati4.3}
E(t)=\frac{1}{2}\left\|\mathcal{V}_{t}(t)\right\|_{2}^{2}+J(t).
\end{equation}
\begin{lemma}\label{lem4.1}  For any $U_{0} \in \mathcal{H}$, $I(t)>0,\forall\; t>0$, if following conditions are satisfied:
\begin{equation}\label{eq4.1}
\left\{\begin{array}{l}
C^{p}_{*}\left(\frac{2 p}{(p-2)} E(0)\right)^{\frac{p-2}{2}}<1, \\
I(0)>0,
\end{array}\right.
\end{equation}
\end{lemma}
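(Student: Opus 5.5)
The argument is a standard continuity (potential-well) argument. It uses the energy identity \eqref{equati4.3}, the monotonicity of $E$ from Lemma \ref{lem4}, and a Sobolev–Poincaré embedding. Here $C_*$ denotes the best constant in $\|\mathcal{V}\|_p\le C_*\|\Delta\mathcal{V}\|_2$ for $\mathcal{V}\in H_0^2(\Omega)$; this constant is finite under \eqref{eq2.7}.

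\emph{Step 1.} By Lemma \ref{lem4}, $E'(t)\le 0$, so $E(t)\le E(0)$ for all $t$ in the existence interval.

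\emph{Step 2: contradiction setup.} Since $I(0)>0$ and $I$ is continuous in $t$ (because $U\in C([0,T),\mathcal{H})$ and the embedding $H_0^2\hookrightarrow L^p$ is continuous), suppose $I$ becomes nonpositive somewhere. Then there is a first time $t_*>0$ with $I(t_*)=0$ and $I(t)>0$ on $[0,t_*)$.

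\emph{Step 3: bound $\|\Delta\mathcal{V}\|_2^2$ on $[0,t_*]$.} Compare $J$ and $I$ termwise: the quadratic $\mathscr{G}$ term is halved and the $z$ term is the same. This gives
\[
J(t)\ge \frac{p-2}{2p}\Big(\|\Delta\mathcal{V}\|_2^2+b\int_\Omega\int_{\mathbb{R}}|\mathscr{G}|^2\,d\alpha\,dx\Big)+\frac1p I(t)+\Big(1-\frac1p\Big)vs\int_\Omega\int_0^1|z|^2\,d\varrho\,dx .
\]
The $z$-term is nonnegative, and a mismatch of constants in the $z$ term, if present, is harmless. Hence on $[0,t_*]$, where $I\ge0$,
\[
\|\Delta\mathcal{V}(t)\|_2^2\le \frac{2p}{p-2}J(t)\le \frac{2p}{p-2}E(t)\le \frac{2p}{p-2}E(0),
\]
using \eqref{equati4.3} and Step 1.

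\emph{Step 4: strict positivity at $t_*$.} Apply the embedding to get
\[
\|\mathcal{V}\|_p^p\le C_*^p\|\Delta\mathcal{V}\|_2^{p-2}\|\Delta\mathcal{V}\|_2^2\le C_*^p\Big(\frac{2p}{p-2}E(0)\Big)^{\frac{p-2}{2}}\|\Delta\mathcal{V}\|_2^2<\|\Delta\mathcal{V}\|_2^2
\]
by the first condition in \eqref{eq4.1}, provided $\Delta\mathcal{V}(t_*)\neq0$. Therefore $I(t_*)\ge\|\Delta\mathcal{V}(t_*)\|_2^2-\|\mathcal{V}(t_*)\|_p^p>0$, which contradicts $I(t_*)=0$.

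\emph{Main obstacle: the degenerate case $\Delta\mathcal{V}(t_*)=0$.} Then $\mathcal{V}(t_*)=0$, and $I(t_*)=0$ forces $\mathscr{G}$ and $z$ to vanish as well. I would handle this in one of two ways:
\begin{itemize}
\item adopt the usual convention that the claim concerns $I(t)\ge0$ with strict positivity whenever $\mathcal{V}\neq0$; or
\item observe that the $\mathscr{G}$ and $z$ terms in $I$ are nonnegative, so $I(t)\ge \|\Delta\mathcal{V}\|_2^2(1-\text{const})$ plus nonnegative terms, which vanishes only for the trivial state. In that case uniqueness from the well-posedness theorem gives $U\equiv0$ thereafter, which is trivial.
\end{itemize}
The embedding constant and the Step 3 inequality are routine; the first-zero continuity argument is the core.
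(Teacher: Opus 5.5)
Your Steps 1--4 are essentially the paper's proof. The paper also uses $I(0)>0$ and continuity, compares $J$ with $\frac1p I$ to get $\|\Delta\mathcal V\|_2^2\le\frac{2p}{p-2}J(t)\le\frac{2p}{p-2}E(0)$, and finishes with the embedding and the first condition in \eqref{eq4.1}. The only difference is that the paper extends an interval $[0,T^*]$ on which $I\ge 0$, where you argue by a first zero. Your identity for $J-\frac1p I$ is the correct one; the paper's display contains a stray $-\frac1p\|\mathcal V\|_p^p$.

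You are right to flag the case $\Delta\mathcal V(t_*)=0$. The paper ignores it, although its strict inequality $\|\mathcal V\|_p^p<\|\Delta\mathcal V\|_2^2$ fails there. Your first option only proves $I\ge 0$, which is all the global existence theorem actually uses. Your second option does not work as written:
\begin{itemize}
\item $I$ does not contain $\mathcal V_t$, so $I(t_*)=0$ gives $\mathcal V(t_*)=0$, $\mathscr G(t_*)=0$, $z(t_*)=0$, but not $U(t_*)=0$.
\item More seriously, if $U(t_*)=0$ did hold, uniqueness would give $I\equiv 0$ on $[t_*,T)$. That would contradict the lemma, not make it trivial.
\end{itemize}
What you need is that this degenerate state cannot be reached when $I(0)>0$. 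In the augmented system \eqref{2..1} this follows from $\mathscr G(\cdot,\cdot,0)=0$:
\[
\mathscr G(x,\alpha,t_*)=\beta(\alpha)\int_0^{t_*}e^{-(\alpha^2+\vartheta)(t_*-\tau)}\,z(x,1,\tau)\,d\tau .
\]
Since $\beta(\alpha)\neq 0$ for $\alpha\neq 0$, $\mathscr G(t_*)=0$ means a finite Laplace transform vanishes for a.e. $\lambda=\alpha^2+\vartheta>\vartheta$. Hence $z(\cdot,1,\tau)=\mathcal V_t(\cdot,\tau-s)=0$ for $\tau\in(0,t_*)$. Together with $z(\cdot,\varrho,t_*)=\mathcal V_t(\cdot,t_*-s\varrho)=0$, this gives $\mathcal V_t=0$ on $(-s,t_*)$, reading $f_0$ for negative times. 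So $f_0=0$, and $\mathcal V_0=\mathcal V(t_*)=0$ because $\mathcal V$ is constant on $[0,t_*]$. Then $I(0)=0$, contradicting $I(0)>0$.
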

\begin{proof}
Given the continuity of $\mathcal{V}$ and the restriction $I(0)>0$, there exists a $T^*<T$ such that $I(t)\geq 0,\;\forall\; t\in [0, T^*]$.
In addition, we have
\begin{equation*}
\begin{split}
    J(t) = & \biggr(\frac{p-2}{2p}\biggr)\|\Delta \mathcal{V}(t)\|_{2}^{2} + b \biggr(\frac{p-2}{2p}\biggr) \int_{\Omega} \int_{-\infty}^{+\infty} |\mathscr{G}(x,\alpha, t)|^{2} \, d\alpha \, dx - \frac{1}{p}\|\mathcal{V}(t)\|_{p}^{p}\\
    & +\frac{(p-1) v s}{p} \int_{\Omega} \int_{0}^{1}|z(x, \varrho, t)|^{2} dx d \varrho + \frac{1}{p} I(t).
\end{split}
\end{equation*}
Therefore,
\begin{equation}\label{eq4.2}
\|{\color{white}{.}}\Delta \mathcal{V}(t) \|_{2}^{2} \leq \frac{2 p}{p-2}{\color{white}{.}}J(t)\leq \frac{2 p}{p-2} E(t) \leq \frac{2 p}{p-2}{\color{white}{.}}E(0).
\end{equation}
Therefore, by Sobolev -Poincare inequality and \eqref{eq4.1}, we obtain
\begin{align}\label{eq.4.6}
    \|\mathcal{V}\|_{p}^{p} \leq& C^{p}_{*}\|\Delta \mathcal{V}\|_2^{p}\nonumber\\
    \leq& C^{p}_{*}\left(\frac{2 p}{(p-2)} E(0)\right)^{\frac{p-2}{2}}\left(\|\Delta \mathcal{V}\|_{2}^{2}\right)< \|\Delta \mathcal{V}\|_{2}^{2}.
    \end{align}
From \eqref{eq.4.6}, we can say that $I(t)>0$, \quad  $\forall\;t\in\left[0, T^{*}\right]$.
By repeating this procedure and using the fact that 
\begin{align*}
   \lim_{t \to T^*} C_*^{\,p}
\left(
\frac{2p}{p-2}\, E(0)
\right)^{\frac{p-2}{2}}
< 1,
\end{align*}
we can take $T^*=T.$
\end{proof}
Furthermore, we will prove our result for global existence.
\begin{theorem}
Suppose that \eqref{eq2.7} holds. Then for any $U_{0} \in \mathcal{H}$ satisfying \eqref{eq4.1}, solution of system \eqref{2..1} in bounded and global.
\end{theorem}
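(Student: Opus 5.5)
The argument bounds the solution in $\mathcal{H}$ uniformly in time, using only the energy dissipation of Lemma \ref{lem4} and the positivity of $I(t)$ from Lemma \ref{lem4.1}, and then extends it globally with the standard continuation principle.

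By the well-posedness theorem of Section 3, problem \eqref{eq3.1} has a unique local solution $U\in C([0,T),\mathcal{H})$ on a maximal interval $[0,T)$. The continuation principle for semilinear equations with a maximal monotone operator $A$ and a locally Lipschitz $J$ (cf. \cite{PZ}) gives the usual alternative: either $T=\infty$, or $\|U(t)\|_{\mathcal{H}}\to\infty$ as $t\to T^-$. It therefore suffices to bound $\|U(t)\|_{\mathcal{H}}$ on $[0,T)$ by a constant that depends only on $E(0)$.

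First, Lemma \ref{lem4} gives $E'(t)\le 0$, so $E(t)\le E(0)$ for all $t\in[0,T)$. Next, assumption \eqref{eq4.1} lets us apply Lemma \ref{lem4.1}, which gives $I(t)>0$ on $[0,T)$. We then rewrite $J(t)$ exactly as in the proof of Lemma \ref{lem4.1}, isolating $\frac1p I(t)$:
\[
J(t)=\frac{p-2}{2p}\Big(\|\Delta\mathcal{V}\|_2^2+b\int_\Omega\int_{\mathbb{R}}|\mathscr{G}|^2\,d\alpha\,dx\Big)+\frac{(p-1)vs}{p}\int_\Omega\int_0^1|z|^2\,d\varrho\,dx+\frac1p I(t).
\]
Since $I(t)>0$, this yields a lower bound for $J(t)$ by a positive multiple of $\|\Delta\mathcal{V}\|_2^2+b\|\mathscr{G}\|^2+2vs\|z\|^2$. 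Some care with the bookkeeping is needed here: the paper's display carries a stray $-\frac1p\|\mathcal{V}\|_p^p$ term. If that term is not cancelled correctly, we instead bound $\frac1p\|\mathcal{V}\|_p^p$ by $\frac1p\|\Delta\mathcal{V}\|_2^2$ using \eqref{eq.4.6}, and absorb it into the other terms. That absorption is possible because $\frac{p-2}{2p}$ and the smallness factor in \eqref{eq4.1} together leave a positive coefficient.

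Combining these facts with \eqref{equati4.3}, we obtain
\[
\frac12\|\mathcal{V}_t\|_2^2+c_p\Big(\|\Delta\mathcal{V}\|_2^2+b\int_\Omega\int_{\mathbb{R}}|\mathscr{G}|^2\,d\alpha\,dx+2vs\int_\Omega\int_0^1|z|^2\,d\varrho\,dx\Big)\le E(t)\le E(0),
\]
with $c_p=\min\{\frac{p-2}{2p},\frac{p-1}{2p}\}>0$. Hence $\|U(t)\|_{\mathcal{H}}^2\le C_pE(0)$ for all $t\in[0,T)$, with $C_p$ independent of $t$. The blow-up alternative then forces $T=\infty$, and the same estimate shows that the solution is bounded. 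The $L^p$ norm is controlled as well, by \eqref{eq.4.6} and the embedding constant $C_*$, whose existence is exactly where the exponent range \eqref{eq2.7} enters.

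The main obstacle is the positivity step, which the paper obtains via Lemma \ref{lem4.1}. Its continuity argument needs $I(t)\ge0$ on a maximal subinterval, and \eqref{eq.4.6} must then give strict inequality at the endpoint. This works because the factor $C_*^p\big(\frac{2p}{p-2}E(0)\big)^{(p-2)/2}$ is a fixed constant less than $1$, and $E(t)\le E(0)$ ensures the same constant applies at every time. Once this is accepted, the remaining steps are routine energy bookkeeping.
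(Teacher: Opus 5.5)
Your proposal is correct and follows essentially the same route as the paper: $E(t)\le E(0)$, positivity of $I(t)$ from Lemma \ref{lem4.1}, rewriting $J(t)$ so that $\frac1p I(t)$ is isolated, and a uniform bound of the form $\|\mathcal{V}_t\|_2^2+\|\Delta\mathcal{V}\|_2^2\le RE(0)$. You are in fact more complete than the paper, since you also bound the $\mathscr{G}$ and $z$ components of $\|U\|_{\mathcal{H}}$ and state the blow-up alternative explicitly. Your displayed identity for $J(t)$ is exact: the $\|\mathcal{V}\|_p^p$ terms cancel, so the paper's extra $-\frac1p\|\mathcal{V}\|_p^p$ is a typo. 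The fallback absorption is therefore unnecessary, which is just as well, because $\frac{p-2}{2p}-\frac{\kappa}{p}$ with $\kappa=C_*^p\big(\frac{2p}{p-2}E(0)\big)^{(p-2)/2}<1$ need not be positive when $p$ is close to $2$.
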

\begin{proof}
From \eqref{equati4.3}, we have
\begin{equation*}
\frac{1}{2}\left\|\mathcal{V}_{t}(t)\right\|_{2}^{2}+J(t)=E(t) \leq E(0)
\end{equation*}
$$E(t)\geq \frac{1}{2}\left\|\mathcal{V}_{t}(t)\right\|_{2}^{2}+ \frac{(p-2)}{2p}\|\Delta \mathcal{V}\|_{2}^{2}+\frac{1}{p}I(t),$$\
Because $I(t)>0,$ therefore
$\left\|\mathcal{V}_{t}(t)\right\|_{2}^{2}+\|\Delta \mathcal{V}\|_{2}^{2}\leq RE(0),$\
where $R=\max\left\{2,\frac{2p}{p-2}\right\}$.
It shows that system \eqref{2..1} has a bounded as well as global solution.
\end{proof}
\section{Exponential Stability}\label{S5}
For the time being, we concentrate on the energy decay estimates for the problem \eqref{2..1}.  A perturbed modified energy is defined by $N>0$ and $\epsilon_{1}>0$,
\begin{equation}\label{eq5.1}
B(t):=N E(t)+\epsilon_{1} K_{1}(t)+\epsilon_{1} K_{2}(t)+K_{3}(t), 
\end{equation}
where
\begin{equation}\label{eq5.2}
K_{1}(t):=\int_{\Omega} \mathcal{V}_{t} \mathcal{V} d x+\frac{a_2}{2}\int_\Omega|\mathcal{V}|^{2}dx,
\end{equation}
\begin{equation}\label{eq5.3}
K_{2}(t):=\frac{b}{2} \int_{\Omega} \int_{-\infty}^{+\infty}\left(\alpha^{2}+\vartheta\right)|M(x, \alpha, t)|^{2} d \alpha d x,
\end{equation}
\begin{equation}\label{eq5.4}
K_{3}(t):=s \int_{\Omega} \int_{0}^{1} e^{-s \varrho}|z(x, \varrho, t)|^{2} d \varrho d x,
\end{equation}
where
$$
M(x, \alpha, t):=\int_{0}^{t} \mathscr{G}(x, \alpha, t) dt-\frac{s \beta(\alpha)}{\alpha^{2}+\vartheta} \int_{0}^{1} f_{0}(x,-\varrho s) d \varrho+\frac{\mathcal{V}_{0}(x) \beta(\alpha)}{\alpha^{2}+\vartheta} .
$$
\begin{lemma}\label{lem5.1}
Let $(\mathcal{V},\mathcal{V}_t)$ be regular solution of problem \eqref{2..1}, then we have
\begin{equation}\label{eq5.5}
\begin{aligned}
\int_{\Omega} & \int_{-\infty}^{+\infty}\left(\alpha^{2}+\vartheta\right) \mathscr{G}(x, \alpha, t)  M(x, \alpha, t) d \alpha d x \\
= & \int_{\Omega} \mathcal{V}(x, t) \int_{-\infty}^{+\infty} \mathscr{G}(x, \alpha, t)  \beta(\alpha) d \alpha d x \\
& -s \int_{\Omega} \int_{0}^{1} z(x, \varrho, t) \int_{-\infty}^{+\infty} \beta(\alpha) \mathscr{G}(x, \alpha, t)  d \alpha d \varrho d x\\
& -\int_{\Omega} \int_{-\infty}^{+\infty}|\mathscr{G}(x, \alpha, t) |^{2} d \alpha d x
\end{aligned}
\end{equation}
\end{lemma}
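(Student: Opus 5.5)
The plan is to compute $(\alpha^{2}+\vartheta)M$ explicitly, show that it equals $\beta(\alpha)\big[\mathcal{V}(x,t)-s\int_0^1 z(x,\varrho,t)\,d\varrho\big]-\mathscr{G}(x,\alpha,t)$, and then multiply by $\mathscr{G}$ and integrate over $\Omega\times\mathbb{R}$. Throughout, the inner time integral in the definition of $M$ is read as $\int_0^t\mathscr{G}(x,\alpha,\tau)\,d\tau$.

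\emph{Step 1: integrate the $\mathscr{G}$-equation in time.} I will use \eqref{2..1}$_2$, namely $(\alpha^{2}+\vartheta)\mathscr{G}=\beta(\alpha)z(x,1,t)-\mathscr{G}_t$, integrated from $0$ to $t$. Since $\mathscr{G}(x,\alpha,0)=0$, this gives
\begin{equation*}
(\alpha^{2}+\vartheta)\int_0^t\mathscr{G}(x,\alpha,\tau)\,d\tau=\beta(\alpha)\int_0^t z(x,1,\tau)\,d\tau-\mathscr{G}(x,\alpha,t).
\end{equation*}

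\emph{Step 2: express $\int_0^t z(x,1,\tau)\,d\tau$ through the transport equation.} Rather than unwinding the delay history $\mathcal{V}_t(\cdot,t-s)$ directly, I will integrate \eqref{2..1}$_3$ over $\varrho\in(0,1)$. Using $z(x,0,t)=\mathcal{V}_t$, this gives
\begin{equation*}
s\frac{d}{dt}\int_0^1 z\,d\varrho=\mathcal{V}_t(x,t)-z(x,1,t).
\end{equation*}
Integrating in time from $0$ to $t$, with $z(x,\varrho,0)=f_0(x,-\varrho s)$ and $\mathcal{V}(x,0)=\mathcal{V}_0$, yields
\begin{equation*}
\int_0^t z(x,1,\tau)\,d\tau=\mathcal{V}(x,t)-\mathcal{V}_0(x)-s\int_0^1 z(x,\varrho,t)\,d\varrho+s\int_0^1 f_0(x,-\varrho s)\,d\varrho .
\end{equation*}
As a consistency check, the same formula follows from the fundamental theorem of calculus applied to $\mathcal{V}(t-s)-\mathcal{V}(-s)$.

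\emph{Step 3: assemble $(\alpha^{2}+\vartheta)M$.} Multiplying the definition of $M$ by $(\alpha^{2}+\vartheta)$ and inserting Steps 1 and 2, the initial-data terms $-s\beta\int_0^1 f_0\,d\varrho+\mathcal{V}_0\beta$ built into $M$ cancel exactly against those generated in Step 2. This leaves
\begin{equation*}
(\alpha^{2}+\vartheta)M=\beta(\alpha)\mathcal{V}(x,t)-s\beta(\alpha)\int_0^1 z(x,\varrho,t)\,d\varrho-\mathscr{G}(x,\alpha,t).
\end{equation*}
Multiplying by $\mathscr{G}$, integrating over $\Omega\times\mathbb{R}$, and using Fubini to move the $\varrho$-integral outside the $\alpha$-integral gives exactly the three terms of \eqref{eq5.5}.

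The main obstacle is justification rather than algebra. One must ensure that all integrals converge and that the time integration and Fubini steps are legitimate. This is why the lemma is stated for regular solutions. For $U\in D(A)$ we have $\alpha\mathscr{G}\in L^2$ and $z_\varrho\in L^2$. By Lemma \ref{lem2}, $\beta^2/(\alpha^2+\vartheta)$ is integrable, so $\int\beta\mathscr{G}\,d\alpha$ is finite by Cauchy--Schwarz, as in Lemma \ref{lem3}. I would carry out the computation for such regular data and, if needed, pass to weak solutions by density using the continuous dependence from the well-posedness theorem.
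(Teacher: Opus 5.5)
Your proposal is correct and follows essentially the paper's argument. The paper likewise combines the $\mathscr{G}$-equation with the transport equation integrated over $\varrho$; the only difference is that it writes $z(x,1,t)-z(x,0,t)=-s\int_0^1 z_t\,d\varrho$ before integrating in time rather than after, as you do. It then obtains $(\alpha^2+\vartheta)M=\beta(\alpha)\mathcal{V}-s\beta(\alpha)\int_0^1 z\,d\varrho-\mathscr{G}$ with the same cancellation of the initial-data terms, and multiplies by $\mathscr{G}$ and integrates.
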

\begin{proof}
Using  \eqref{2..1}$_{2}$, to obtain
$$
\begin{aligned}
\left(\alpha^{2}+\vartheta \right) \mathscr{G}(\alpha, t)= & z(x, 1, t) \beta(\alpha)-\mathscr{G}_{t}( \alpha, t) \\
= & \beta(\alpha)[z(x, 1, t)-z(x, 0, t)] \\
& +\mathcal{V}_{t}(x, t) \beta(\alpha)-\mathscr{G}_{t}(x, \alpha, t) .
\end{aligned}
$$
Observe that
$$
-s \int_{0}^{1} z_{t}(x, \varrho, t) d \varrho=\int_{0}^{1} z_{\varrho}(x, \varrho, t) d \varrho=z(x, 1, t)-z(x, 0, t)
$$
Whereupon
$$
\begin{aligned}
\left(\alpha^{2}+\vartheta\right) \mathscr{G}(x, \alpha, t) = & -s \beta(\alpha) \int_{0}^{1} z_{t}(x, \varrho, t) d \varrho \\
& +\mathcal{V}_{t}(x, t) \beta(\alpha)-\mathscr{G}_{t}(x, \alpha, t)
\end{aligned}
$$
If we integrate the last equality throughout the interval $[0, t]$, we get
$$
\begin{aligned}
\int_{0}^{t}\left(\alpha^{2}+\vartheta\right) \mathscr{G}(x, \alpha, s) d s= & -\operatorname{s} \beta(\alpha) \int_{0}^{1} z(x, \varrho, t) d \varrho \\
& +\operatorname{s} \beta(\alpha) \int_{0}^{1} f_{0}(x,-\varrho s) d \varrho \\
& +\mathcal{V}(x, t) \beta(\alpha)-\mathcal{V}_{0}(x) \beta(\alpha)-\mathscr{G}(\alpha, t)
\end{aligned}
$$
Therefore,
\begin{equation}\label{eq5.6}
\begin{aligned}
\left(\alpha^{2}+\vartheta\right) M( \alpha, t)= & -s \beta(\alpha) \int_{0}^{1} z(x, \varrho, t) d \varrho \\
& +\mathcal{V}(x, t) \beta(\alpha)-\mathscr{G}(x, \alpha, t) 
\end{aligned}
\end{equation}
We get \eqref{eq5.5} by multiplying \eqref{eq5.6} by $\mathscr{G}$ and integrating over $\Omega \times(-\infty,+\infty)$.
\end{proof}
\begin{lemma}\label{lem5.2}
If $(\mathcal{V}, \mathcal{V}_t, \mathscr{G}, z)$ is a solution to the problem \eqref{2..1}, then
\begin{equation}\label{eq5.7}
 \left|K_1(t)\right| \leq \frac{1}{2}\|\mathcal{V}_{t}\|_{2}^{2}+\frac{C_{*}^{2}}{2} \|\mathcal{V}\|_{2}^{2}.
\end{equation}
\end{lemma}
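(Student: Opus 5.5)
The plan is a direct estimate of the two pieces of $K_1(t)$ in \eqref{eq5.2}: Cauchy--Schwarz and Young for the cross term, then absorb the quadratic term into the constant in front of $\|\mathcal{V}\|_2^2$. Only pointwise-in-time inequalities are needed, so no dynamics of \eqref{2..1} enter beyond $\mathcal{V}(t)\in H_0^2(\Omega)$ and $\mathcal{V}_t(t)\in L^2(\Omega)$. These hold for the solution given by the well-posedness theorem.

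First, by the triangle inequality,
\[
|K_1(t)|\le \Big|\int_\Omega \mathcal{V}_t\,\mathcal{V}\,dx\Big| + \frac{a_2}{2}\|\mathcal{V}\|_2^2 .
\]
For the cross term, Cauchy--Schwarz followed by Young's inequality $ab\le \tfrac12 a^2+\tfrac12 b^2$ gives
\[
\Big|\int_\Omega \mathcal{V}_t\,\mathcal{V}\,dx\Big|\le \|\mathcal{V}_t\|_2\|\mathcal{V}\|_2\le \frac12\|\mathcal{V}_t\|_2^2+\frac12\|\mathcal{V}\|_2^2 .
\]
The coefficient $\tfrac12$ in front of $\|\mathcal{V}_t\|_2^2$ is already the one claimed in \eqref{eq5.7}. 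Adding the two bounds yields
\[
|K_1(t)|\le \frac12\|\mathcal{V}_t\|_2^2+\frac{1+a_2}{2}\|\mathcal{V}\|_2^2 .
\]

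The only delicate point is matching the constant $\frac{C_*^2}{2}$ in \eqref{eq5.7}. The constant $C_*$ is the embedding/Poincaré constant already used in \eqref{eq.4.6}, and it enters only as an upper bound. We may therefore enlarge it, without affecting any earlier estimate in the same direction, so that $C_*^2\ge 1+a_2$. With that convention the last display gives exactly \eqref{eq5.7}.

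If one prefers to keep $C_*$ as the sharp embedding constant, the estimate should instead be read as holding with $\max\{C_*^2,1+a_2\}$ in place of $C_*^2$. This change is harmless for the subsequent use of $K_1$ in the Lyapunov functional $B(t)$ of \eqref{eq5.1}. There, only the equivalence $B(t)\sim E(t)$ for large $N$ matters, and $\|\mathcal{V}\|_2^2$ is further controlled by $\|\Delta\mathcal{V}\|_2^2$ via Poincaré and hence by $E(t)$ through \eqref{eq4.2}.
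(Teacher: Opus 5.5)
The paper states Lemma~\ref{lem5.2} without proof, so there is no argument of the authors to compare against. Your Cauchy--Schwarz/Young estimate is the natural proof, and the bound
\[
|K_1(t)|\le \frac12\|\mathcal{V}_t\|_2^2+\frac{1+a_2}{2}\|\mathcal{V}\|_2^2
\]
is correct. It is also optimal once the coefficient $\frac12$ in front of $\|\mathcal{V}_t\|_2^2$ is fixed. So your caveat about the constant is not cosmetic: if $C_*$ is a genuine embedding/Poincaré constant with $C_*^2<1+a_2$, then \eqref{eq5.7} is false. Take $t=0$ and data $\mathcal{V}_1=\mathcal{V}_0\neq 0$, which the well-posedness theorem allows. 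Then $K_1(0)=\bigl(1+\frac{a_2}{2}\bigr)\|\mathcal{V}_0\|_2^2$, while the right-hand side of \eqref{eq5.7} equals $\frac{1+C_*^2}{2}\|\mathcal{V}_0\|_2^2$, which is smaller. With the value $L^2/\pi^2\approx 0.1$ used in Section~\ref{sec:numerical}, this is exactly the situation. The statement therefore has to be corrected, and $1+a_2$ is the smallest admissible replacement for $C_*^2$.

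Of your two fixes, keep the second. Enlarging $C_*$ globally is harmless for the estimates before the lemma, as you say, but not for those after it. The argument for Theorem~\ref{T59} needs the coefficient $\epsilon_1(1-bA_0C_*^2)$ of $\|\Delta\mathcal{V}\|_2^2$ in \eqref{eq5.24} to be positive. The blow-up proof needs $\frac{p(1-r)}{2}-\frac{a_2C_*^2}{2}+\frac{C_2}{2k}-1>0$. Forcing $C_*^2\ge 1+a_2$ would add parameter restrictions the paper never imposes, for instance $bA_0(1+a_2)<1$. A lemma-local constant $\max\{C_*^2,1+a_2\}$ (or simply $1+a_2$) is all that Lemma~\ref{lem5.5} uses. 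There any constant suffices, because under \eqref{eq4.1} and \eqref{eq4.2} one has $\|\mathcal{V}\|_2^2\le C\|\Delta\mathcal{V}\|_2^2\le C' E(t)$.
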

\begin{lemma}\label{lem5.3}
Let $(\mathcal{V}, \mathcal{V}_t, \mathscr{G}, z)$ be regular solution of problem \eqref{2..1}, then we have
\begin{equation}\label{eq5.8}
|K_{3}(t)|\leq Cs \int_{\Omega} \int_{0}^{1} e^{-s \varrho}|z(x, \varrho, t)|^{2} d \varrho d x
\end{equation}
\end{lemma}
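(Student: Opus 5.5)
The statement is immediate once we recall how $K_3$ is defined in \eqref{eq5.4}. The idea is to observe that $K_3(t)$ is non-negative and then take any constant $C\geq 1$.

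The steps are as follows.

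First, by \eqref{eq5.4}, $K_3(t)=s\int_\Omega\int_0^1 e^{-s\varrho}|z(x,\varrho,t)|^2\,d\varrho\,dx$. Since $s>0$, $e^{-s\varrho}>0$ and $|z|^2\ge 0$, the integrand is non-negative, so $K_3(t)\ge 0$ and $|K_3(t)|=K_3(t)$.

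Hence \eqref{eq5.8} holds with equality for $C=1$, and a fortiori for every $C\ge1$.

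The form in which Lemma \ref{lem5.3} will actually be used later, when comparing $B(t)$ with $E(t)$, is probably a bound in terms of the delay part of the energy. To record it, I would use $e^{-s}\le e^{-s\varrho}\le 1$ for $\varrho\in(0,1)$. This gives the two-sided estimate
\[
e^{-s}\, s\int_\Omega\int_0^1|z|^2\,d\varrho\,dx\;\le\; K_3(t)\;\le\; s\int_\Omega\int_0^1|z|^2\,d\varrho\,dx .
\]
By \eqref{eq2.5} and the positivity of $v$ in \eqref{eq2.6}, the right-hand side is at most $v^{-1}$ times the term $vs\int_\Omega\int_0^1|z|^2$ appearing in $E(t)$.

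There is no genuine obstacle in this lemma. The only point needing care is the regularity required for $K_3(t)$ to be finite, and this is guaranteed by $z\in L^2(\Omega\times(0,1))$, since $U\in C([0,T),\mathcal H)$.
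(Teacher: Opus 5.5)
Your argument is correct and is the intended one. $K_3(t)$ is by definition \eqref{eq5.4} exactly the nonnegative quantity $s\int_\Omega\int_0^1 e^{-s\varrho}|z|^2\,d\varrho\,dx$, so \eqref{eq5.8} holds with equality for $C=1$; the paper states the lemma without proof because it is this immediate. You are also right to read $C$ as a generic constant $\ge 1$ rather than the specific constant of \eqref{eq2.13}, which need not exceed $1$.
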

\begin{lemma}\label{lem5.4}
Let $(\mathcal{V}, \mathscr{G}, z)$ be the regular solution of problem \eqref{2..1}, then
\begin{equation}\label{eq5.9}
\begin{aligned}
&| K_2(t)| \leq 3 s^{2} A_{0} \int_{\Omega} \int_{0}^{1}|z(x, \varrho, t)|^{2} d \varrho d x+3 A_{0} C_{*}^{2}\| \mathcal{V}\|_{2}^{2}\\
& \quad+\frac{3}{\vartheta} \int_{\Omega} \int_{-\infty}^{+\infty}|\mathscr{G}(x, \alpha, t) |^{2} d \alpha d x
\end{aligned}
\end{equation}
\end{lemma}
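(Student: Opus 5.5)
The plan is to bound $K_2$ by using the identity \eqref{eq5.6} from the proof of Lemma \ref{lem5.1}. It expresses $M$ explicitly:
\[
M(x,\alpha,t)=\frac{\beta(\alpha)}{\alpha^{2}+\vartheta}\Big(\mathcal{V}(x,t)-s\int_{0}^{1} z(x,\varrho,t)\,d\varrho\Big)-\frac{\mathscr{G}(x,\alpha,t)}{\alpha^{2}+\vartheta}.
\]
Multiplying by $\sqrt{\alpha^{2}+\vartheta}$ and squaring, the elementary inequality $(x_1+x_2+x_3)^2\le 3(x_1^2+x_2^2+x_3^2)$ gives, pointwise in $(x,\alpha)$,
\[
(\alpha^{2}+\vartheta)|M|^{2}\le \frac{3\beta^{2}(\alpha)}{\alpha^{2}+\vartheta}\,|\mathcal{V}|^{2}+\frac{3s^{2}\beta^{2}(\alpha)}{\alpha^{2}+\vartheta}\Big|\int_0^1 z\,d\varrho\Big|^{2}+\frac{3|\mathscr{G}|^{2}}{\alpha^{2}+\vartheta}.
\]

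Next I integrate over $\alpha\in\mathbb{R}$. The first two terms produce exactly the constant $A_0$ of \eqref{def.A_0}, which is finite by Lemma \ref{lem2} with $\lambda=0$. For the third term I use $\alpha^{2}+\vartheta\ge\vartheta$, which gives the factor $3/\vartheta$. For the $z$-term I apply Cauchy--Schwarz (Jensen) in $\varrho$: $|\int_0^1 z\,d\varrho|^2\le\int_0^1|z|^2d\varrho$. Integrating over $\Omega$ then yields
\[
\int_{\Omega}\int_{\mathbb{R}}(\alpha^{2}+\vartheta)|M|^{2}\,d\alpha\,dx\le 3A_0\|\mathcal{V}\|_2^2+3s^2A_0\int_\Omega\int_0^1|z|^2\,d\varrho\,dx+\frac{3}{\vartheta}\int_\Omega\int_{\mathbb{R}}|\mathscr{G}|^2\,d\alpha\,dx .
\]

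The last step reconciles this with the constants in \eqref{eq5.9}. $K_2$ carries the prefactor $b/2$, and $b=\sin(\theta\pi)/\pi\le 1/\pi<2$, so $b/2\le 1$ and the prefactor can simply be dropped. The stated bound has $3A_0C_*^2\|\mathcal{V}\|_2^2$ rather than $3A_0\|\mathcal{V}\|_2^2$. Here I would either assume the normalization $C_*\ge 1$ (harmless, since $C_*$ is the embedding/Poincaré constant and can always be enlarged) or read $\|\mathcal{V}\|_2$ as controlled by $C_*\|\Delta\mathcal{V}\|_2$ as in \eqref{eq.4.6}. I regard this constant bookkeeping as the only delicate point.

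The one genuine obstacle is justifying \eqref{eq5.6}, i.e.\ that $M$ is well defined and the $\alpha$-integrals converge. This requires $\beta^2/(\alpha^2+\vartheta)$ to be integrable, which holds because $\beta^2(\alpha)=|\alpha|^{2\theta-1}$ with $0<\theta<1$: the singularity at $0$ is integrable and the decay at infinity behaves like $|\alpha|^{2\theta-3}$. For this reason I would first work with regular solutions and, if needed, pass to the limit by density.
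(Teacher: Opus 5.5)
Your proposal is correct and follows essentially the paper's route. The paper also starts from \eqref{eq5.6}. It expands the square and absorbs the three cross terms by Young's inequality, which is exactly your $(x_1+x_2+x_3)^2\le 3(x_1^2+x_2^2+x_3^2)$. It then uses H\"older in $\varrho$ and $1/(\alpha^2+\vartheta)\le 1/\vartheta$, and finally invokes Poincar\'e's inequality to produce the factor $C_*^2$.

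Of your two readings of that factor, the paper's is the second: the intended term is really $3A_0C_*^2\|\Delta\mathcal{V}\|_2^2$. The paper also drops the prefactor $b/2$ silently, whereas you justify this via $b\le 1/\pi$.
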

\begin{proof}
Invoking \eqref{eq5.6}, to obtain
\begin{equation}\label{eq5.10}
\begin{aligned}
& \left.\left|\int_{\Omega} \int_{-\infty}^{+\infty}\left(\alpha^{2}+\vartheta\right)\right| M(x, \alpha, t)\right|^{2} d \alpha d x \mid \\
& \quad \leq s^{2} A_{0} \int_{\Omega}\left(\int_{0}^{1} z(x, \varrho, t) d \varrho\right)^{2} d x \\
& \quad+A_{0}\|\mathcal{V}\|_{2}^{2}+\int_{\Omega} \int_{-\infty}^{+\infty} \frac{|\mathscr{G}(x, \alpha, t) |^{2}}{\alpha^{2}+\vartheta} d \alpha d x \\
& \quad+2 \int_{\Omega} \int_{-\infty}^{+\infty} \frac{|\mathscr{G}(x, \alpha, t)  \mathcal{V}(x, t) \beta(\alpha)|}{\alpha^{2}+\vartheta} d \alpha d x\\
& \quad+2 s A_{0} \int_{\Omega}\left|\mathcal{V}(x, t) \int_{0}^{1} z(x, \varrho, t) d \varrho\right| d x \\
& \quad+2 s \int_{\Omega} \int_{-\infty}^{+\infty} \frac{\left|\mathscr{G}(x, \alpha, t)  \beta(\alpha) \int_{0}^{1} z(x, \varrho, t) d \varrho\right|}{\alpha^{2}+\vartheta} d \alpha d x
\end{aligned}
\end{equation}
The right hand side of \eqref{eq5.10} will now be estimated.  Hölder's inequality first gives us
\begin{equation}\label{eq5.11}
\int_{0}^{1} z(x, \varrho, t) d \varrho \leq\left(\int_{0}^{1}|z(x, \varrho, t)|^{2} d \varrho\right)^{\frac{1}{2}}
\end{equation}
We apply Young's inequality to the fourth and fifth terms to get
\begin{equation}\label{eq5.12}
\begin{aligned}
\int_{\Omega} \int_{-\infty}^{+\infty} & \frac{|\mathscr{G}(x, \alpha, t)  \mathcal{V}(x, t) \beta(\alpha)|}{\alpha^{2}+\vartheta} d \alpha d x \\
& \leq \frac{A_{0}}{2}\|\mathcal{V}\|_{2}^{2}+\frac{1}{2} \int_{\Omega} \int_{-\infty}^{+\infty} \frac{|\mathscr{G}(x, \alpha, t) |^{2}}{\alpha^{2}+\vartheta} d \alpha d x 
\end{aligned}
\end{equation}
and
\begin{equation}
s \int_{\Omega}\left|\mathcal{V}(x, t) \int_{0}^{1} z(x, \varrho, t) d \varrho\right| d x \leq \frac{s^{2}}{2} \int_{\Omega} \int_{0}^{1}|z(x, \varrho, t)|^{2} d \varrho d x+\frac{1}{2}\|\mathcal{V}\|_{2}^{2}
\end{equation}
We use Lemma \ref{lem3}, Young's inequality, and \eqref{eq5.11}, to the final term to obtain
\begin{equation}\label{eq5.14}
\begin{aligned}
& s \int_{\Omega} \int_{-\infty}^{+\infty} \frac{\left|\mathscr{G}(\alpha, t) \beta(\alpha) \int_{0}^{1} z( \varrho, t) d \varrho\right|}{\alpha^{2}+\vartheta} d \alpha d x \\
& \quad \leq \frac{s^{2} A_{0}}{2} \int_{\Omega} \int_{0}^{1}|z(x, \varrho, t)|^{2} d \varrho d x \\
& \quad+\frac{1}{2} \int_{\Omega} \int_{-\infty}^{+\infty} \frac{|\mathscr{G}(x, \alpha, t) |^{2}}{\alpha^{2}+\vartheta} d \alpha d x
\end{aligned}
\end{equation}
As a result, we find
\begin{equation}\label{eq5.15}
\begin{aligned}
& \left.\left|\int_{\Omega} \int_{-\infty}^{+\infty}\left(\alpha^{2}+\vartheta\right)\right| M(x, \alpha, t)\right|^{2} d \alpha d x \mid \\
&  \leq 3 s^{2} \int_{\Omega} \int_{0}^{1}|z(x, \varrho, t)|^{2} d \varrho d x+3 A_{0}\|\mathcal{V}\|_{2}^{2}\\
& \quad+3 \int_{\Omega} \int_{-\infty}^{+\infty} \frac{|\mathscr{G}(x, \alpha, t) |^{2}}{\alpha^{2}+\vartheta} d \alpha d x
\end{aligned}
\end{equation}
By applying Poincaré's inequality and the fact that $\frac{1}{\alpha^{2}+\vartheta} \leq \frac{1}{\vartheta}$, \eqref{eq5.9} is established.\\
\end{proof}
\begin{lemma}\label{lem5.5}
Assume that \((\mathcal{V}, \mathcal{V}_t, \mathscr{G} )\) is a solution to the problem \eqref{2..1}, satisfying lemma \ref{lem5.2}, lemma \ref{lem5.3}, and lemma \ref{lem5.4}.  Thus,the two positive constants \(\eta_{1}\) and \(\eta_{2}\) exist such that
\begin{equation*}
\eta_{1} \mathrm{E}(\mathrm{t}) \leq \mathrm{B}(\mathrm{t}) \leq \eta_{2} \mathrm{E}(\mathrm{t}). \end{equation*}
\end{lemma}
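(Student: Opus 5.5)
We bound $|B(t)-NE(t)|$ by a fixed multiple of $E(t)$ and then choose $N$ large. By the triangle inequality,
\[
|B(t)-NE(t)| \le \epsilon_1|K_1(t)|+\epsilon_1|K_2(t)|+|K_3(t)|,
\]
so it suffices to find a constant $c_0>0$, independent of $N$, with $\epsilon_1|K_1|+\epsilon_1|K_2|+|K_3|\le c_0 E(t)$. Then
\[
(N-c_0)E(t)\le B(t)\le (N+c_0)E(t),
\]
and choosing $N>c_0$ gives the statement with $\eta_1=N-c_0$ and $\eta_2=N+c_0$.

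The bounds on the three perturbation terms come from the earlier lemmas.
\begin{itemize}
\item By Lemma \ref{lem5.2}, and since $\frac{a_2}{2}\|\mathcal{V}\|_2^2$ is controlled by $\|\Delta\mathcal{V}\|_2^2$ via Poincaré, $|K_1|$ is bounded by a combination of $\|\mathcal{V}_t\|_2^2$ and $\|\Delta\mathcal{V}\|_2^2$.
\item By Lemma \ref{lem5.4}, $|K_2|$ is bounded by multiples of $\int_\Omega\int_0^1|z|^2$, $\|\Delta \mathcal{V}\|_2^2$ and $\int_\Omega\int_{\mathbb R}|\mathscr G|^2$.
\item For $K_3$, since $e^{-s\varrho}\le 1$, we have $|K_3|\le s\int_\Omega\int_0^1|z|^2$.
\end{itemize}
Each of these quantities, namely $\|\mathcal{V}_t\|_2^2$, $\|\Delta\mathcal{V}\|_2^2$, $b\int\int|\mathscr G|^2$ and $vs\int\int|z|^2$, must then be dominated by a multiple of $E(t)$.

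This domination is the main obstacle, because $E(t)$ contains the negative term $-\frac1p\|\mathcal{V}\|_p^p$ and is not coercive in general. We work in the regime of Section~\ref{S4}, under \eqref{eq4.1}. There Lemma \ref{lem4.1} gives $I(t)>0$, and the decomposition of $J$ from its proof yields
\[
E(t)\ge \tfrac12\|\mathcal{V}_t\|_2^2+\tfrac{p-2}{2p}\Big(\|\Delta\mathcal{V}\|_2^2+b\int_\Omega\int_{\mathbb R}|\mathscr G|^2\Big)+\tfrac{(p-1)vs}{p}\int_\Omega\int_0^1|z|^2 .
\]
This is a two-sided equivalence between $E(t)$ and the squared $\mathcal H$-type norm $\|\mathcal{V}_t\|^2+\|\Delta\mathcal{V}\|^2+b\|\mathscr G\|^2+vs\|z\|^2$. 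The upper bound $E\lesssim$ norm is not needed for this lemma.

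Collecting the constants from Lemmas \ref{lem5.2}–\ref{lem5.4} (involving $C_*$, $A_0$, $s$, $\vartheta$, $a_2$, $b$, $v$ and $p$) together with this lower bound gives an explicit $c_0=c_0(\epsilon_1)$. We fix $\epsilon_1$ first, as dictated later by the derivative estimate, and then take $N$ large enough that $N>c_0$. Making the Sobolev–Poincaré constants explicit here, including the conversion from $\|\mathcal{V}\|_2$ to $\|\Delta\mathcal{V}\|_2$, is routine.
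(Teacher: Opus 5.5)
Your proposal is correct and is exactly the standard perturbation argument the paper leaves to the reader when it calls the proof obvious: bound $\epsilon_1|K_1|+\epsilon_1|K_2|+|K_3|$ by $c_0E(t)$ using Lemmas~\ref{lem5.2}--\ref{lem5.4}, fix $\epsilon_1$, then take $N>c_0$. Your explicit use of $I(t)\ge 0$ (Lemma~\ref{lem4.1}, under \eqref{eq4.1} and \textbf{(A1)}) to make $E$ coercive is genuinely needed and is left implicit in the paper; your lower bound on $E$ is right, since it follows from the correct identity $J=\frac{p-2}{2p}\bigl(\|\Delta\mathcal{V}\|_2^2+b\int_\Omega\int_{\mathbb{R}}|\mathscr{G}|^2\bigr)+\frac{(p-1)vs}{p}\int_\Omega\int_0^1|z|^2+\frac1p I$ rather than from the identity displayed in the paper, which carries a spurious extra $-\frac1p\|\mathcal{V}\|_p^p$.
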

\begin{proof}
The proof is obvious.
\end{proof}
\begin{lemma}
Let us suppose that \textbf{(A1)} and \eqref{eq2.7} are true. The function $K_{1}$ defined by \eqref{eq5.2} satisfies \begin{equation}\label{eq5.19}.
\begin{aligned}
K_{1}^{\prime}(t)\leq & \left\|\mathcal{V}_{t}\right\|_{2}^{2}+\|\mathcal{V}\|_{p}^{p}-\eta\|\Delta \mathcal{V}\|_{2}^{2}+\frac{a_2}{2}\int_\Omega |z(x,1,t)|^2dx \\
& +\frac{b}{4} \int_{\Omega} \int_{-\infty}^{+\infty}\left(\alpha^{2}+\vartheta\right)|\mathscr{G}(x, \alpha, t) |^{2} d \alpha d x \\
\end{aligned}
\end{equation}
\end{lemma}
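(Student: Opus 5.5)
We differentiate $K_1$ along a regular solution and substitute the first equation of \eqref{2..1}. First,
\[
K_1'(t)=\|\mathcal{V}_t\|_2^2+\int_\Omega \mathcal{V}_{tt}\mathcal{V}\,dx+a_2\int_\Omega \mathcal{V}\mathcal{V}_t\,dx .
\]
Replacing $\mathcal{V}_{tt}$ by $-\Delta^2\mathcal{V}-b\int_{\mathbb{R}}\mathscr{G}\beta\,d\alpha-a_1\mathcal{V}_t-a_2 z(x,1,t)+\mathcal{V}|\mathcal{V}|^{p-2}$, and integrating by parts twice with the clamped conditions $\mathcal{V}=\partial_\nu\mathcal{V}=0$, gives
\[
K_1'(t)=\|\mathcal{V}_t\|_2^2-\|\Delta\mathcal{V}\|_2^2+\|\mathcal{V}\|_p^p-b\int_\Omega\mathcal{V}\int_{\mathbb{R}}\beta\mathscr{G}\,d\alpha\,dx-(a_1-a_2)\int_\Omega\mathcal{V}\mathcal{V}_t\,dx-a_2\int_\Omega\mathcal{V}z(x,1,t)\,dx .
\]
The term $\|\mathcal{V}_t\|_2^2+\|\mathcal{V}\|_p^p$ already appears in \eqref{eq5.19}. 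Every other term has to be absorbed into a fraction of $\|\Delta\mathcal{V}\|_2^2$, into $\frac{a_2}{2}\int_\Omega|z(x,1,t)|^2$, or into $\frac b4\int\!\int(\alpha^2+\vartheta)|\mathscr{G}|^2$.

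The fractional coupling term is handled by Lemma \ref{lem3} with $\mathcal{V}$ in place of $z$, together with the scaled Young inequality $xy\le \delta x^2+\frac1{4\delta}y^2$. Together with the Cauchy--Schwarz step in its proof this gives
\[
b\Big|\int_\Omega\mathcal{V}\int_{\mathbb{R}}\beta\mathscr{G}\Big|\le bA_0\|\mathcal{V}\|_2^2+\frac b4\int_\Omega\int_{\mathbb{R}}(\alpha^2+\vartheta)|\mathscr{G}|^2 .
\]
For the delay term, Young's inequality gives $a_2|\int_\Omega\mathcal{V}z(x,1,t)|\le \frac{a_2}{2}\|\mathcal{V}\|_2^2+\frac{a_2}{2}\int_\Omega|z(x,1,t)|^2$. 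Finally, every $\|\mathcal{V}\|_2^2$ produced is converted via the Poincaré-type inequality $\|\mathcal{V}\|_2^2\le C_*^2\|\Delta\mathcal{V}\|_2^2$ on $H_0^2(\Omega)$, the same constant used in Lemma \ref{lem5.2}.

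The main obstacle is the mixed term $(a_1-a_2)\int_\Omega\mathcal{V}\mathcal{V}_t$ together with keeping the resulting coefficient $\eta$ positive. The coefficient $\|\mathcal{V}_t\|_2^2$ in the statement is exactly $1$, so this term cannot be absorbed into $\mathcal{V}_t$ without enlarging that coefficient. I would try two remedies.
\begin{itemize}
\item First choice: treat the time-derivative term more carefully, since $\int_\Omega\mathcal{V}\mathcal{V}_t=\frac12\frac{d}{dt}\|\mathcal{V}\|_2^2$ is a total derivative. The correction $\frac{a_2}{2}\|\mathcal{V}\|_2^2$ in the definition of $K_1$ is intended to compensate the delay contribution in the same way. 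The plan is to check that, with the delay term rewritten through $z(x,1,t)$ and the $a_1$-term, the mixed terms cancel or reduce to a piece controllable by $\|\Delta\mathcal{V}\|_2^2$ and the $z(\cdot,1)$ term. I expect this bookkeeping may need the coefficient in $K_1$ adjusted to $\frac{a_1}{2}$.
\item Fallback: if a residual $\delta\|\mathcal{V}_t\|_2^2$ survives, absorb it into the $\|\mathcal{V}_t\|_2^2$ coefficient, which is harmless for the later Lyapunov argument because $NE$ dominates.
\end{itemize}
With either remedy, collecting terms gives
\[
\eta=1-C_*^2\Big(bA_0+\frac{a_2}{2}+\delta\text{-terms}\Big),
\]
and positivity of $\eta$ requires a smallness condition on $bA_0$ and $a_2$ relative to $C_*^{-2}$. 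I would state this as an implicit assumption, compatible with \textbf{(A1)}, under which \eqref{eq5.19} follows.
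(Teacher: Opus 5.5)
Your formula for $K_1'$ is correct, and it is more careful than the paper's. With the weight $\frac{a_2}{2}$ of \eqref{eq5.2}, the term $-(a_1-a_2)\int_\Omega\mathcal{V}\mathcal{V}_t\,dx$ genuinely survives, and \eqref{eq5.19} as literally stated is false. To see this, take regular compatible data with $\mathcal{V}_1=-\lambda\mathcal{V}_0$, $\mathscr{G}(\cdot,\cdot,0)=0$ and $f_0(\cdot,-s)=0$. At $t=0$ the $\mathscr{G}$- and $z(\cdot,1)$-terms vanish, and $\|\mathcal{V}_1\|_2^2$ and $\|\mathcal{V}_0\|_p^p$ cancel from both sides. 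The left side of \eqref{eq5.19} minus the right side is then $(a_1-a_2)\lambda\|\mathcal{V}_0\|_2^2-(1-\eta)\|\Delta\mathcal{V}_0\|_2^2$, which is positive for large $\lambda$ because $a_1>a_2$ under \textbf{(A1)}.

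The paper reaches \eqref{eq5.19} only through an erroneous step. It writes $K_1'=\|\mathcal{V}_t\|_2^2+\int_\Omega\mathcal{V}_{tt}\mathcal{V}\,dx+\frac{a_2}{2}\int_\Omega|\mathcal{V}|^2dx$, so the correction term is never differentiated. After substituting the equation, the term $-a_1\int_\Omega\mathcal{V}\mathcal{V}_t\,dx$ silently disappears. What that argument actually establishes is the lemma for $K_1=\int_\Omega\mathcal{V}_t\mathcal{V}\,dx+\frac{a_1}{2}\|\mathcal{V}\|_2^2$. This is the same correction the paper itself uses in the blow-up functional \eqref{eq6.9}. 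With it, $\frac{d}{dt}\frac{a_1}{2}\|\mathcal{V}\|_2^2=a_1\int_\Omega\mathcal{V}\mathcal{V}_t\,dx$ cancels the damping term exactly. The remaining steps are then exactly yours: Lemma \ref{lem3} with $\mathcal{V}$ in place of $z$, Young on the delay term, and Poincar\'e. They give $\eta=1-\frac{a_2}{2}C_{**}^2-bA_0C_*^2$; note that the $\eta$ displayed in the paper has the opposite sign.

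The gap in your proposal is that you never commit to this correction, and the rationale of your first remedy is mistaken. The $\frac{a_2}{2}$ correction cannot compensate the delay contribution, since $z(x,1,t)=\mathcal{V}_t(x,t-s)$ produces no total time derivative. By the example above, no rearrangement with weight $\frac{a_2}{2}$ can absorb the mixed term into $\|\Delta\mathcal{V}\|_2^2$ and $\int_\Omega|z(x,1,t)|^2dx$. You should state plainly that the definition \eqref{eq5.2} itself must be changed to weight $\frac{a_1}{2}$; this is a correction of the statement, not a matter of bookkeeping.

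Your fallback is a valid argument, but for a different inequality. It yields coefficient $1+\delta$ on $\|\mathcal{V}_t\|_2^2$ and subtracts an extra $\frac{(a_1-a_2)^2C_*^2}{4\delta}$ from $\eta$, which makes positivity of $\eta$ strictly harder to obtain. Finally, positivity of $\eta$ is not part of this lemma, since $\eta$ is just the constant the estimate produces. No smallness hypothesis belongs here; it is needed only when the lemma is used in Theorem \ref{T59}.
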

Where $ \eta=\frac{a_2}{2}C_{**}^{2}+bA_0C_{*}^{2}-1$
\begin{proof}
A straightforward differentiation of $K_{1}$ and applying \eqref{2..1} provides
\begin{equation}\label{eq5.20}
\begin{aligned}
K_{1}^{\prime}(t)= & \left\|\mathcal{V}_{t}\right\|_{2}^{2}+\int_{\Omega} \mathcal{V}_{t t} \mathcal{V} d x+\frac{a_2}{2}\int_\Omega|\mathcal{V}|^2dx\\
= & \left\|\mathcal{V}_{t}\right\|_{2}^{2}-\|\Delta \mathcal{V}\|_{2}^{2}+\|\mathcal{V}\|_{p}^{p}-a_2\int_\Omega |z(x,1,t)| \mathcal{V}dx \\
& -b \int_{\Omega}\mathcal{V} \int_{-\infty}^{+\infty}\mathscr{G}(x, \alpha, t) \beta(\alpha) d \alpha d x
\end{aligned}
\end{equation}
by using Young's inequality and Lemma \ref{lem3}, we get
$$
\begin{aligned}
K_{1}^{\prime}(t)\leq & \left\|\mathcal{V}_{t}\right\|_{2}^{2}-\|\Delta \mathcal{V}\|_{2}^{2}+\|\mathcal{V}\|_{p}^{p}+\frac{a_2}{2}\|\mathcal{V}\|_{2}^{2}+\frac{a_2}{2}\int_\Omega|z(x,1,t)|^2dx\\
& +bA_0\|\mathcal{V}\|_{2}^{2} +\frac{b}{4} \int_{\Omega} \int_{-\infty}^{+\infty}\left(\alpha^{2}+\vartheta\right)|\mathscr{G}(x, \alpha, t) |^{2} d \alpha d x
\end{aligned}
$$
By using Poincaré's inequality on the $4^{th}$ and $6^{th}$ term of above expression, \eqref{eq5.19} is established.\\
\end{proof}
\begin{lemma}
 Let $(\mathcal{V}, \mathcal{V}_t, \mathscr{G}, z)$ be a solution of \eqref{2..1}, then, functional $K_2$ satisfies,
\begin{equation}\label{eq5.21}
\begin{aligned}
K_{2}^{\prime}(t) \leq & bA_0C_{*}^{2}\|\Delta \mathcal{V}\|_{2}^{2}+\frac{b}{4} \int_{\Omega} \int_{-\infty}^{+\infty}\left(\alpha^{2}+\vartheta \right)|\mathscr{G}(x, \alpha, t) |^{2} d \alpha d x \\
& +sbA_0 \int_{\Omega} \int_{0}^{1}|z(x, \varrho, t)|^{2} d \varrho d x+\frac{bs}{4} \int_{\Omega} \int_{-\infty}^{+\infty}\left(\alpha^{2}+\vartheta \right)|\mathscr{G}(x, \alpha, t) |^{2} d \alpha d x \\
& -b \int_{\Omega} \int_{-\infty}^{+\infty}|\mathscr{G}(x, \alpha, t) |^{2} d \alpha d x
\end{aligned}
\end{equation}
\end{lemma}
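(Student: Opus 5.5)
We differentiate $K_2$ directly:
\[
K_2'(t)=b\int_\Omega\int_{-\infty}^{+\infty}(\alpha^2+\vartheta)\,M(x,\alpha,t)\,M_t(x,\alpha,t)\,d\alpha\,dx .
\]
From the definition of $M$, the terms involving $f_0$ and $\mathcal{V}_0$ do not depend on $t$. Hence $M_t=\mathscr{G}(x,\alpha,t)$, and
\[
K_2'(t)=b\int_\Omega\int_{-\infty}^{+\infty}(\alpha^2+\vartheta)\,\mathscr{G}\,M\,d\alpha\,dx .
\]
This is exactly the left-hand side of Lemma \ref{lem5.1}. Invoking that identity and multiplying by $b$ gives three terms:
\begin{itemize}
\item $b\int_\Omega \mathcal{V}\int\beta\mathscr{G}\,d\alpha\,dx$;
\item $-sb\int_\Omega\int_0^1 z\int\beta\mathscr{G}\,d\alpha\,d\varrho\,dx$;
\item $-b\int_\Omega\int|\mathscr{G}|^2\,d\alpha\,dx$.
\end{itemize}
The third term is already the negative term in \eqref{eq5.21}, so only the first two need estimating.

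For the first term we apply Lemma \ref{lem3} with $z$ replaced by $\mathcal{V}$; the proof of that lemma uses only Cauchy–Schwarz in $\alpha$ and Young's inequality. This bounds it by
\[
bA_0\|\mathcal{V}\|_2^2+\frac b4\int_\Omega\int(\alpha^2+\vartheta)|\mathscr{G}|^2\,d\alpha\,dx .
\]
We then use the Sobolev–Poincaré inequality $\|\mathcal{V}\|_2\le C_*\|\Delta\mathcal{V}\|_2$, which holds for $\mathcal{V}\in H^2_0(\Omega)$ under the clamped boundary conditions. This gives $bA_0C_*^2\|\Delta\mathcal{V}\|_2^2$.

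For the second term we first pull the $\varrho$-integral outside, writing it as
\[
-sb\int_0^1\Big(\int_\Omega z(x,\varrho,t)\int\beta\mathscr{G}\,d\alpha\,dx\Big)d\varrho .
\]
For each fixed $\varrho$, Lemma \ref{lem3} bounds the inner quantity in absolute value by
\[
A_0\int_\Omega|z(x,\varrho,t)|^2\,dx+\frac14\int_\Omega\int(\alpha^2+\vartheta)|\mathscr{G}|^2\,d\alpha\,dx .
\]
Integrating over $\varrho\in(0,1)$ leaves the $\mathscr{G}$ part unchanged, since it does not depend on $\varrho$. Multiplying by $sb$ yields
\[
sbA_0\int_\Omega\int_0^1|z|^2\,d\varrho\,dx+\frac{bs}{4}\int_\Omega\int(\alpha^2+\vartheta)|\mathscr{G}|^2\,d\alpha\,dx .
\]
Summing the three contributions gives \eqref{eq5.21}.

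The only step requiring care is the justification that $M_t=\mathscr{G}$ and the differentiation under the integral. In the definition of $M$, the integrand should be read as $\int_0^t\mathscr{G}(x,\alpha,\tau)\,d\tau$. This is valid for regular solutions, with $\alpha\mathscr{G}\in L^2$ as in $D(A)$, and the general case follows by density. Everything else is a direct application of Lemma \ref{lem5.1} and Lemma \ref{lem3}.
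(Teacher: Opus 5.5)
Your proposal is correct and takes essentially the same route as the paper. You differentiate $K_2$ using $M_t=\mathscr{G}$, substitute the identity of Lemma~\ref{lem5.1}, and bound both cross terms with Lemma~\ref{lem3} (the $z$-term pointwise in $\varrho$, then integrated over $\varrho$) before applying Poincar\'e, $\|\mathcal{V}\|_2\le C_*\|\Delta\mathcal{V}\|_2$, exactly as the paper does.
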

\begin{proof}
 By differentiating $K_2$, we obtain
\begin{equation*}
K_{2}^{\prime}(t)=b \int_{\Omega} \int_{-\infty}^{+\infty}\left(\alpha^{2}+\vartheta \right) \mathscr{G}(x,\alpha, t)M(x, \alpha, t) d \alpha d x.
\end{equation*}
Using lemma \ref{lem5.1}, we arrive at
$$
K_{2}^{\prime}(t) = b \int_{\Omega} \int_{-\infty}^{+\infty} \mathscr{G}(x, \alpha, t)  \left[-s \beta(\alpha) \int_{0}^{1}z(x,\varrho,t) d\varrho+\mathcal{V}(x,t)\beta(\alpha)-\mathscr{G}(x,\alpha,t) \right] d\alpha \, dx
$$
Using Young's inequality and lemma \ref{lem3}, we get
$$
\begin{aligned}
K_{2}^{\prime}(t) \leq & bA_0 \left\|\mathcal{V}\right\|_{2}^{2}+\frac{b}{4} \int_{\Omega} \int_{-\infty}^{+\infty}\left(\alpha^{2}+\vartheta \right)|\mathscr{G}(x, \alpha, t) |^{2} d \alpha d x \\
& +sbA_0 \int_{\Omega} \int_{0}^{1}|z(x, \varrho, t)|^{2} d \varrho d x+\frac{bs}{4} \int_{\Omega} \int_{-\infty}^{+\infty}\left(\alpha^{2}+\vartheta \right)|\mathscr{G}(x, \alpha, t) |^{2} d \alpha d x \\
& -b \int_{\Omega} \int_{-\infty}^{+\infty}|\mathscr{G}(x, \alpha, t) |^{2} d \alpha d x
\end{aligned}
$$
By using Poincare's inequality on the first term of above equation, Lemma is proved.
\end{proof}
\begin{lemma}\label{lem5.8}
Assume that \textbf{(A1)} and \eqref{eq2.7} are valid. Thus, the functional $K_{2}$ defined by \eqref{eq5.4} satisfies
\begin{equation}\label{eq5.22}
K_{3}^{\prime}(t) \leq-s e^{-s} \int_{\Omega} \int_{0}^{1}|z(x, \varrho, t)|^{2} d \varrho d x+\left\|\mathcal{V}_{t}\right\|_{2}^{2}
\end{equation}
\end{lemma}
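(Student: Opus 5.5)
The approach is to differentiate $K_3$ directly and use the transport equation \eqref{2..1}$_3$, namely $s z_t=-z_\varrho$, so that the derivative becomes an exact $\varrho$-derivative up to a weighted zero-order term. Differentiating \eqref{eq5.4} under the integral sign gives
\begin{equation*}
K_3'(t)=2s\int_\Omega\int_0^1 e^{-s\varrho} z\,z_t\,d\varrho\,dx=-\int_\Omega\int_0^1 e^{-s\varrho}\,\partial_\varrho\big(|z(x,\varrho,t)|^2\big)\,d\varrho\,dx .
\end{equation*}

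The next step is to integrate by parts in $\varrho$ over $(0,1)$:
\begin{equation*}
K_3'(t)=-\int_\Omega\Big[e^{-s}|z(x,1,t)|^2-|z(x,0,t)|^2\Big]dx-s\int_\Omega\int_0^1 e^{-s\varrho}|z(x,\varrho,t)|^2\,d\varrho\,dx .
\end{equation*}
The boundary term at $\varrho=1$ is nonpositive, so I drop it. For the term at $\varrho=0$, I use the coupling condition $z(x,0,t)=\mathcal{V}_t(x,t)$ from \eqref{2..1}, which turns it into $\|\mathcal{V}_t\|_2^2$.

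For the interior term, the weight satisfies $e^{-s\varrho}\ge e^{-s}$ for $\varrho\in(0,1)$. Hence
\begin{equation*}
-s\int_\Omega\int_0^1 e^{-s\varrho}|z|^2\,d\varrho\,dx\le -s e^{-s}\int_\Omega\int_0^1|z|^2\,d\varrho\,dx .
\end{equation*}
Combining this with the previous step gives \eqref{eq5.22}. Note that the hypotheses (A1) and \eqref{eq2.7} are not actually needed for this estimate.

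There is no real obstacle here. The only care point is justifying the differentiation and the integration by parts for the weak solution. I would do this first for regular solutions, with $U_0\in D(A)$, for which $z_\varrho\in L^2$ and the trace $z(\cdot,1)$ makes sense. I would then pass to general data by density, using the semigroup well-posedness of Section~3.
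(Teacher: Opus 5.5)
Your proposal is correct and follows the paper's proof essentially step for step. You differentiate $K_3$, use $s z_t=-z_\varrho$ to get an exact $\varrho$-derivative plus the term $-s\int_\Omega\int_0^1 e^{-s\varrho}|z|^2\,d\varrho\,dx$, integrate by parts, substitute $z(x,0,t)=\mathcal{V}_t(x,t)$, drop the nonpositive $-e^{-s}\int_\Omega |z(x,1,t)|^2\,dx$, and bound $e^{-s\varrho}\ge e^{-s}$. You also spell out the last two steps, which the paper leaves implicit, and your starting identity $K_3'(t)=2s\int_\Omega\int_0^1 e^{-s\varrho} z\,z_t\,d\varrho\,dx$ has the correct sign; the paper's displayed $-2s$ is a harmless typo, since its next expression is right.
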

\begin{proof}
Using the $3^{rd}$ equation of \eqref{2..1} and the time derivative of $K_{2}$, we obtain
\begin{align*}
K_{3}^{\prime}(t) & = -2 s \int_{\Omega} \int_{0}^{1} e^{-s \varrho} z(x, \varrho, t) z_{t}(x, \varrho, t) \, d\varrho \, dx \\
& = -2 \int_{\Omega} \int_{0}^{1} e^{-s \varrho} z(x, \varrho, t) z_{\varrho}(x, \varrho, t) \, d\varrho \, dx \\
& = -\int_{\Omega} \int_{0}^{1} \frac{d}{d \varrho}\left[e^{-s \varrho}|z(x, \varrho, t)|^{2}\right] \, d\varrho \, dx - s \int_{\Omega} \int_{0}^{1} e^{-s \varrho}|z(x, \varrho, t)|^{2} \, d\varrho \, dx \\
& = -s \int_{\Omega} \int_{0}^{1} e^{-s \varrho}|z(x, \varrho, t)|^{2} \, d\varrho \, dx - e^{-s} \int_{\Omega}|z(x, 1, t)|^{2} \, dx + \left\|\mathcal{V}_{t}\right\|_{2}^{2}
\end{align*}
Then \eqref{eq5.22} is established.\\
\end{proof}
\begin{theorem}\label{T59}
Let  \textbf{(A1)} and \eqref{eq2.7} hold. Assume that $U_{0} \in \mathcal{H}$ satisfies \eqref{eq4.1} then any solution to \eqref{2..1}  satisfies
\begin{equation}
E(t) \leq K e^{-w t}, \quad t \geq 0, 
\end{equation}
$w$ and $K$ are positive constants that are independent of $t$.
\end{theorem}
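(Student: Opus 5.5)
We follow the standard Lyapunov route with the perturbed energy $B(t)$ from \eqref{eq5.1}. The aim is to show that, for suitable $N$ and $\epsilon_1$, $B'(t)\le -c\,E(t)$ for some $c>0$. Combined with the equivalence $\eta_1E\le B\le \eta_2E$ of Lemma~\ref{lem5.5}, this gives $B'(t)\le -\frac{c}{\eta_2}B(t)$. Gronwall then yields $B(t)\le B(0)e^{-ct/\eta_2}$, and hence $E(t)\le \frac{\eta_2}{\eta_1}E(0)e^{-wt}$ with $w=c/\eta_2$ and $K=\frac{\eta_2}{\eta_1}E(0)$.

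\textbf{Step 1: combine the derivative estimates.} We differentiate $B$ and insert Lemma~\ref{lem4} for $E'$, \eqref{eq5.19} for $K_1'$, \eqref{eq5.21} for $K_2'$ and \eqref{eq5.22} for $K_3'$. We use $z(x,0,t)=\mathcal{V}_t$ and $e^{-s\varrho}\ge e^{-s}$, and we bound $\int_\Omega\!\int_{\mathbb R}|\mathscr G|^2$ by $\vartheta^{-1}\int_\Omega\!\int_{\mathbb R}(\alpha^2+\vartheta)|\mathscr G|^2$. The result is an inequality of the form
\begin{align*}
B'(t)\le{}& -(NC-\epsilon_1-1)\|\mathcal{V}_t\|_2^2-\Big(NC-\frac{\epsilon_1a_2}{2}\Big)\int_\Omega|z(x,1,t)|^2dx\\
&-\Big(\frac{Nb}{2}-\epsilon_1c_1\Big)\int_\Omega\int_{\mathbb R}(\alpha^2+\vartheta)|\mathscr G|^2d\alpha dx-\epsilon_1(\eta-bA_0C_*^2)\|\Delta\mathcal{V}\|_2^2\\
&+\epsilon_1\|\mathcal{V}\|_p^p-(se^{-s}-\epsilon_1sbA_0)\int_\Omega\int_0^1|z|^2d\varrho dx .
\end{align*}
Here we need the sign convention in the $K_1'$ estimate to provide a genuine $-\eta'\|\Delta\mathcal V\|_2^2$ term with $\eta'>0$. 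If needed, we redo the Young splitting with a small parameter $\delta$: write $a_2|\int z(1)\mathcal V|\le \delta\|\mathcal V\|_2^2+C_\delta\|z(1)\|^2$, and handle the $\mathscr G$ term likewise using Lemma~\ref{lem3} with weight $\delta$. Then Poincaré gives $-(1-\delta C)\|\Delta\mathcal V\|_2^2$, and the larger $C_\delta$ terms are absorbed by $N$.

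\textbf{Step 2: handle the source term, which is the main obstacle.} The only positive contribution is $\epsilon_1\|\mathcal{V}\|_p^p$, and we control it using the invariance from Section~\ref{S4}. Under \eqref{eq4.1}, Lemma~\ref{lem4.1} gives $I(t)>0$, and \eqref{eq.4.6} gives $\|\mathcal{V}\|_p^p\le \gamma\|\Delta\mathcal{V}\|_2^2$ with $\gamma:=C_*^p\big(\frac{2p}{p-2}E(0)\big)^{(p-2)/2}<1$. The coefficient of $\|\Delta\mathcal V\|_2^2$ then becomes $-\epsilon_1(1-\delta C-\gamma)$, which is negative once $\delta$ is small. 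We expect this to be the delicate point, since the margin $1-\gamma$ must beat the Young losses, and those losses involve $bA_0C_*^2$ and $a_2$.

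\textbf{Step 3: choose the constants.} First fix $\delta$ (hence $\epsilon_1$-independent constants) so that $1-\gamma-\delta C>0$. Next take $\epsilon_1$ small so that $se^{-s}-\epsilon_1 sbA_0>0$. Finally take $N$ large so that all the $N$-coefficients are positive, and also large enough that Lemma~\ref{lem5.5} holds with $\eta_1>0$, using Lemmas~\ref{lem5.2}--\ref{lem5.4}.

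This leaves $B'(t)\le -c_2\big(\|\mathcal V_t\|_2^2+\|\Delta\mathcal V\|_2^2+\int\!\!\int(\alpha^2+\vartheta)|\mathscr G|^2+\int\!\!\int|z|^2\big)$. To finish Step~1, we compare this bracket with $E(t)$: we drop the negative $-\frac1p\|\mathcal V\|_p^p$ and use $\vartheta\int\!\!\int|\mathscr G|^2\le\int\!\!\int(\alpha^2+\vartheta)|\mathscr G|^2$. This gives $B'(t)\le -cE(t)$, and the Gronwall argument above concludes the proof.
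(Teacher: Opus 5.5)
Your argument follows the paper's route. It uses the same functional $B=NE+\epsilon_1K_1+\epsilon_1K_2+K_3$ and the same derivative estimates, with $\epsilon_1$ chosen small for the $\int_\Omega\int_0^1|z|^2$ term and $N$ large afterwards, then Lemma~\ref{lem5.5} and integration. The genuine difference is your Step~2, and there your version is the one that actually closes.

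In the paper, several things go unjustified:
\begin{itemize}
\item \eqref{eq5.24} still contains $+\epsilon_1\|\mathcal{V}\|_p^p$, and the next display simply moves $\|\mathcal{V}\|_p^p$ into $-m(\cdots)$ without any argument.
\item The coefficient $-\epsilon_1(1-bA_0C_*^2)$ of $\|\Delta\mathcal{V}\|_2^2$ is tacitly assumed negative. This does not follow from (A1), and the sign of $\eta$ in \eqref{eq5.19} is off, as you noticed.
\item Hypothesis \eqref{eq4.1} is never invoked in the proof.
\end{itemize}

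You instead use Lemma~\ref{lem4.1} and \eqref{eq.4.6} to get $\|\mathcal{V}\|_p^p\le\gamma\|\Delta\mathcal{V}\|_2^2$ with $\gamma<1$. You then apply a $\delta$-weighted Young splitting whose $C_\delta$-parts land on $\int_\Omega|z(x,1,t)|^2dx$ and $\int_\Omega\int_{\mathbb{R}}(\alpha^2+\vartheta)|\mathscr{G}|^2d\alpha\,dx$, both of which $NE'$ dissipates. This is the standard stable-set argument. It puts the theorem's hypothesis to work and removes the hidden smallness condition.

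One bookkeeping point needs fixing. Your Step~1 display still carries $+\epsilon_1 bA_0C_*^2\|\Delta\mathcal{V}\|_2^2$ from \eqref{eq5.21}, which is not small. Yet Step~2 writes the coefficient as $-\epsilon_1(1-\delta C-\gamma)$. That is only justified if you also handle the cross term $b\int_\Omega\mathcal{V}\int_{\mathbb{R}}\beta(\alpha)\mathscr{G}\,d\alpha\,dx$ in $K_2'$, in one of two ways:
\begin{itemize}
\item $\delta$-split it as you did the one in $K_1'$; or
\item more simply, note that it cancels exactly against $-b\int_\Omega\mathcal{V}\int_{\mathbb{R}}\beta(\alpha)\mathscr{G}\,d\alpha\,dx$ in $K_1'$, because $K_1$ and $K_2$ carry the same weight $\epsilon_1$ in $B$. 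That cancellation is precisely why $K_2$ is built from $M$.
\end{itemize}
With either fix, your proof goes through.
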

\begin{proof}
After differentiating \eqref{eq5.1}, use \eqref{eq13}, \eqref{eq5.19},\eqref{eq5.21}, and \eqref{eq5.22}, we will get
\begin{equation}\label{eq5.24}
\begin{aligned}
B^{\prime}(t) & \leq -(NC-1-\epsilon_1) \left\|\mathcal{V}_t\right\|_{2}^{2} - \epsilon_1(1-bA_0C_{*}^{2})\left\|\Delta\mathcal{V}\right\|_{2}^{2} \\
& \quad - b(\frac{N}{4} - \frac{\epsilon_1 (2+s)}{4} )\int_{\Omega} \int_{-\infty}^{+\infty} \left(\alpha^{2}+\vartheta \right) |\mathscr{G}(x, \alpha, t) |^{2} \, d\alpha \, dx \\
& \quad - s\left( e^{-s}-\epsilon_1 b A_0 \right) \int_{\Omega} \int_{0}^{1} |z(x, \varrho, t)|^{2} \, d\varrho \, dx- \left(NC-\frac{\epsilon_1 a_2}{2}\right) \int_{\Omega} |z(1, t)|^{2} \, dx \\
& \quad - \epsilon_1 b \int_{\Omega} \int_{-\infty}^{+\infty} |\mathscr{G}(x, \alpha, t) |^{2} \, d\alpha \, dx + \epsilon_1|| \mathcal{V}(t)||_p^p
\end{aligned}
\end{equation}
At this stage, we select $N$ is large enough and $\epsilon_{1}$ small enough that
$$
NC-1-\epsilon_1>0, 
\quad \frac{bN}{4} - \frac{\epsilon_1 b(2+s)}{4}>0\\
\quad and \quad
s(e^{-s}-\epsilon_1 b A_0)>0
$$
In light of the previous inequality, we have arrived at the conclusion that there exists a constant $m>0$ convert \eqref{eq5.24} into
\begin{equation}
\begin{aligned}
B^{\prime}(t) & \leq -m \left( \left\|\mathcal{V}_t\right\|_{2}^{2} + \left\|\Delta\mathcal{V}\right\|_{2}^{2}+ \|\mathcal{V}\|_p^p \right.\nonumber \\
& \quad + \int_{\Omega} \int_{-\infty}^{+\infty} |\mathscr{G}(x, \alpha, t) |^{2} \, d\alpha \, dx \left. +\int_{\Omega} \int_{0}^{1} |z(x, \varrho, t)|^{2} \, d\varrho \, dx  \right)
\end{aligned}
\end{equation}
The above expression takes the form
\begin{equation}
B^{\prime}(t) \leq-m E(t), \quad \text { for all } t \geq 0
\end{equation}
By lemma \ref{lem5.5},
\begin{equation}\label{eq5.28}
B^{\prime}(t) \leq-w B(t), \quad \text { for all } t \geq 0 .
\end{equation}
A straightforward integration of \eqref{eq5.28} over $(0, t)$ results in
$$
B(t) \leq B(0) e^{-w t}, \quad t \geq 0 .
$$
Since $B(t)$ and $E(t)$ are equivalent, we get
\begin{equation}
E(t) \leq k e^{-w t}, \quad t \geq 0 .
\end{equation}
\end{proof}
\section{Blow-up Result}
\label{sec6}
In this section, we work under assumption,\\ 
\begin{equation}
\vartheta^{\,1-\theta} < a_2 \tag{A2}
\end{equation}
 which corresponds
to a delay-dominated regime and differs from the stability condition \textbf{(A1)}.
We analyze blow-up behavior of solution to the problem \eqref{2..1} in the following section. For the blow up result, suppose
\begin{equation}\label{eq6.1}
\begin{aligned}
H(t) = -E(t) &=  \frac{1}{p}\|\mathcal{V}\|_{p}^{p}- \frac{1}{2}\left\|\mathcal{V}_{t}\right\|_{2}^{2}- \frac{b}{2} \int_{\Omega} \int_{-\infty}^{+\infty} |\mathscr{G}(x, \alpha, t)|^{2} \, d\alpha \, dx \\
& - \frac{1}{2}\|\Delta \mathcal{V}\|_{2}^{2} - v s \int_{\Omega} \int_{0}^{1} |z(x, \varrho, t)|^{2} \, d\varrho \, dx,
\end{aligned}
\end{equation}
We need the following lemma.
\begin{lemma}[\cite{KM}]\label{lem6.1}
Let $\Omega$ be a bounded domain. Then there exists a constant $C>0$, depending only on $\Omega$, such that for every $\mathcal{V}\in L^{p}(\Omega)$ and for all $s$ with $2 \leq s \leq p$, one has
\begin{equation}
\|\mathcal{V}\|_{p}^{s} \;\leq\; C \Big( \|\Delta \mathcal{V}\|_{2}^{2} \;+\; \|\mathcal{V}\|_{p}^{p} \Big).
\end{equation}
\end{lemma}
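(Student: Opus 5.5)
The inequality is the standard interpolation estimate of \cite{KM}, used later in the concavity argument. The natural route is to split into cases according to the size of $\|\mathcal{V}\|_{p}$ and exploit that $s$ lies between $2$ and $p$.

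\textbf{Case $\|\mathcal{V}\|_{p}\ge 1$.} Since $s\le p$, the map $r\mapsto \|\mathcal{V}\|_{p}^{r}$ is nondecreasing. Hence $\|\mathcal{V}\|_{p}^{s}\le \|\mathcal{V}\|_{p}^{p}$, and the claim holds with $C=1$ without using the $\Delta\mathcal{V}$ term at all.

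\textbf{Case $\|\mathcal{V}\|_{p}\le 1$.} Now $s\ge 2$ gives $\|\mathcal{V}\|_{p}^{s}\le \|\mathcal{V}\|_{p}^{2}$, so we must control $\|\mathcal{V}\|_{p}^{2}$ by $\|\Delta\mathcal{V}\|_{2}^{2}$. We use the Sobolev--Poincar\'e embedding $H_{0}^{2}(\Omega)\hookrightarrow L^{p}(\Omega)$, already invoked in Lemma \ref{lem4.1} as $\|\mathcal{V}\|_{p}\le C_{*}\|\Delta\mathcal{V}\|_{2}$. Here the clamped boundary conditions $\mathcal{V}=\partial_{\nu}\mathcal{V}=0$ make $\|\Delta\cdot\|_{2}$ an equivalent norm on $H_{0}^{2}(\Omega)$, and \eqref{eq2.7} guarantees the embedding. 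This yields $\|\mathcal{V}\|_{p}^{s}\le C_{*}^{2}\|\Delta\mathcal{V}\|_{2}^{2}$.

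Combining the two cases gives the statement with $C=\max\{1,C_{*}^{2}\}$, which depends only on $\Omega$ (and on $p$, $n$ through the embedding constant).

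The only delicate point is the hypothesis. As stated, requiring merely $\mathcal{V}\in L^{p}(\Omega)$ is not enough: the right-hand side is then possibly infinite, or the second case fails without boundary conditions. I would therefore read the lemma for $\mathcal{V}\in H_{0}^{2}(\Omega)$, the energy space in which solutions live. With that reading no further obstacle arises, since the embedding constant is precisely the $C_{*}$ used earlier.
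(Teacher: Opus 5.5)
Your two-case argument is correct and is essentially the standard proof of this lemma in the literature the paper cites. The paper gives no proof of its own, only the reference. The split is $\|\mathcal{V}\|_p\ge 1$, where $\|\mathcal{V}\|_p^s\le\|\mathcal{V}\|_p^p$, versus $\|\mathcal{V}\|_p\le 1$, where $\|\mathcal{V}\|_p^s\le\|\mathcal{V}\|_p^2\le C_*^2\|\Delta\mathcal{V}\|_2^2$ via $H_0^2(\Omega)\hookrightarrow L^p(\Omega)$. You are also right that the lemma must be read for $\mathcal{V}\in H_0^2(\Omega)$, which is the class used in the original lemma.

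One refinement to your justification for that hypothesis: an infinite right-hand side is harmless, since the inequality is then vacuous. The genuine obstruction is a function in $H^2(\Omega)$ without the clamped conditions. For example, take $\mathcal{V}\equiv\epsilon$, so that $\Delta\mathcal{V}=0$. Then the claim reads $\epsilon^s|\Omega|^{s/p}\le C\epsilon^p|\Omega|$, which fails as $\epsilon\to 0$ when $s<p$.
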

\begin{theorem}
\label{thm6.2}
Suppose that conditions \eqref{eq2.7} and (A2) hold, and that the initial energy satisfies
$E(0)<0$. Then the solution of system \eqref{2..1} cannot exist globally in time; in fact, it blows
up after a finite interval.
\end{theorem}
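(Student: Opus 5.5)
We will use the classical Georgiev--Todorova/Messaoudi-type argument, adapted to the augmented system \eqref{2..1}. The basic object is $H(t)=-E(t)$ from \eqref{eq6.1}.

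\textbf{Step 1: monotonicity of $H$.} By Lemma~\ref{lem4}, estimate \eqref{eq2.8} gives $H'(t)=-E'(t)\ge 0$. More precisely,
\[
H'(t)\ge C\int_\Omega (|z(x,0,t)|^2+|z(x,1,t)|^2)\,dx+\frac b2\int_\Omega\int_{\mathbb R}(\alpha^2+\vartheta)|\mathscr G|^2\,d\alpha\,dx .
\]
Since $E(0)<0$, this yields $0<H(0)\le H(t)\le \frac1p\|\mathcal V\|_p^p$ for all $t$.

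\textbf{Step 2: the Lyapunov functional.} Set
\[
L(t)=H^{1-\sigma}(t)+\varepsilon\int_\Omega \mathcal V\mathcal V_t\,dx+\frac{\varepsilon a_1}{2}\|\mathcal V\|_2^2,
\]
with $0<\sigma\le \frac{p-2}{2p}$ and $\varepsilon>0$ small. Differentiate and use the first equation of \eqref{2..1}. This gives
\[
\frac{d}{dt}\int_\Omega \mathcal V\mathcal V_t=\|\mathcal V_t\|_2^2-\|\Delta\mathcal V\|_2^2+\|\mathcal V\|_p^p-a_1\!\int\mathcal V\mathcal V_t-a_2\!\int z(x,1,t)\mathcal V-b\!\int \mathcal V\!\int\beta\mathscr G .
\]
The $a_1$ term cancels exactly against the derivative of $\frac{\varepsilon a_1}{2}\|\mathcal V\|_2^2$.

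\textbf{Step 3: the indefinite terms (main obstacle).} The delay term and the fractional term must be controlled. We apply Young's inequality with a parameter $\delta$, as in Lemma~\ref{lem3}:
\[
\Big|b\int\mathcal V\int\beta\mathscr G\Big|\le \delta bA_0\|\mathcal V\|_2^2+\frac{b}{4\delta}\int\int(\alpha^2+\vartheta)|\mathscr G|^2 ,\qquad
a_2\Big|\int z(1)\mathcal V\Big|\le \frac{a_2\delta}{2}\|\mathcal V\|_2^2+\frac{a_2}{2\delta}\|z(1)\|_2^2 .
\]
The $z(1)$ and $\mathscr G$ contributions are absorbed by $H'(t)$ via Step 1. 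To do this, choose $\delta$ as $M H^{-\sigma}(t)$ with $M$ large; then these terms are dominated by $(1-\sigma)H^{-\sigma}H'$.

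The price is $\delta\|\mathcal V\|_2^2\lesssim M H^{-\sigma}\|\mathcal V\|_p^2$. Using $H\le \frac1p\|\mathcal V\|_p^p$, this becomes $\lesssim M\|\mathcal V\|_p^{2-\sigma p}$. Lemma~\ref{lem6.1} with exponent $s=2-\sigma p\in[2,p]$ then bounds it by $C(\|\Delta\mathcal V\|_2^2+\|\mathcal V\|_p^p)$. I expect this exponent bookkeeping to be the delicate point, since one may need to use $s=2+\ldots$ or instead exploit Poincaré; I would first try the version with $\|\mathcal V\|_2^2\le C\|\mathcal V\|_p^2$ and $\|\mathcal V\|_p^2\le \|\mathcal V\|_p^{p}H^{-(p-2)/p}\cdot$const. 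Assumption (A2) relating $\vartheta^{1-\theta}$ and $a_2$ enters here, through $A_0$, which Lemma~\ref{lem2} evaluates as $A_0=\frac{\pi}{\sin\theta\pi}\vartheta^{\theta-1}$, so that $bA_0=\vartheta^{\theta-1}$. I would use it to guarantee the coefficient balance between the fractional and delay contributions.

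\textbf{Step 4: closing the argument.} Next, write $-\|\Delta\mathcal V\|^2$ as $p H+\frac p2\|\mathcal V_t\|^2+\frac{pb}{2}\int|\mathscr G|^2+pvs\int|z|^2-\|\mathcal V\|_p^p$ plus the remaining $\frac{p-2}{2}\|\Delta \mathcal V\|^2$. This yields, for $\varepsilon$ small,
\[
L'(t)\ge \gamma\big(H+\|\mathcal V_t\|_2^2+\|\Delta\mathcal V\|_2^2+\|\mathcal V\|_p^p+\|z\|^2+\|\mathscr G\|^2\big).
\]
Choosing $\varepsilon$ small also ensures $L(0)>0$. Finally, estimate
\[
L^{1/(1-\sigma)}\le C\big(H+\|\mathcal V_t\|_2^2+\|\Delta\mathcal V\|_2^2+\|\mathcal V\|_p^p\big),
\]
using Hölder, $|\int\mathcal V\mathcal V_t|^{1/(1-\sigma)}\le C(\|\mathcal V\|_p^{\mu/(1-\sigma)}+\|\mathcal V_t\|_2^{\nu/(1-\sigma)})$ with $\nu=2(1-\sigma)$, and Lemma~\ref{lem6.1} again. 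This gives $L'\ge \Gamma L^{1/(1-\sigma)}$. Integration then shows that $L$, and hence the solution, blows up before $T^*\le \frac{1-\sigma}{\Gamma\sigma}L(0)^{-\sigma/(1-\sigma)}$.
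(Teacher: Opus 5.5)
Your Step 3 fails at the choice $\delta=MH^{-\sigma}(t)$ with $M$ large. The power of $H$ has the wrong sign, and neither claim you make about this choice holds.

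First, the absorption. The absorbed term becomes
\[
\frac{\varepsilon b}{4\delta}\int_\Omega\int_{\mathbb R}(\alpha^2+\vartheta)|\mathscr G|^2\,d\alpha\,dx\le \frac{\varepsilon}{2M}\,H^{\sigma}(t)\,H'(t).
\]
Dominating this by $(1-\sigma)H^{-\sigma}H'$ requires $\varepsilon H^{2\sigma}\lesssim M$. That fails as $H\to\infty$, which is exactly the regime the argument must cover. The term $\frac{\varepsilon a_2}{2\delta}\|z(\cdot,1,t)\|_2^2$ behaves the same way.

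Second, the price estimate. From $H\le \frac1p\|\mathcal V\|_p^p$ you only get a \emph{lower} bound for $H^{-\sigma}$, so $H^{-\sigma}\|\mathcal V\|_p^2\lesssim \|\mathcal V\|_p^{2-\sigma p}$ does not follow. In any case $2-\sigma p<2$ lies outside the range $2\le s\le p$ of Lemma~\ref{lem6.1}. The only upper bound you have is $H^{-\sigma}\le H(0)^{-\sigma}$. It leaves a cost $\varepsilon M bA_0H(0)^{-\sigma}\|\mathcal V\|_2^2$ that grows with $M$, so it cannot be absorbed by $\varepsilon\frac{p-2}{2}\|\Delta\mathcal V\|_2^2$. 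Your fallback $\|\mathcal V\|_p^2\lesssim \|\mathcal V\|_p^pH^{-(p-2)/p}$ has the same defect: it produces $\varepsilon M c\,H(0)^{-\sigma-(p-2)/p}\|\mathcal V\|_p^p$, which is again large for large $M$.

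The correct choice is $\delta=H^{\sigma}(t)/(2k)$, which is what the paper does when it sets $\frac{1}{2\delta}=kH^{-\gamma}$. With $C$ the constant of Lemma~\ref{lem4}, the absorbed terms are then bounded by $\varepsilon k(1+a_2/C)H^{-\sigma}H'$, which is harmless once $\varepsilon k$ is small. The price becomes
\[
\frac{\varepsilon}{2k}\Big(bA_0+\frac{a_2}{2}\Big)H^{\sigma}\|\mathcal V\|_2^2\le \frac{\varepsilon c}{k}\|\mathcal V\|_p^{\sigma p+2}\le \frac{\varepsilon c'}{k}\big(\|\Delta\mathcal V\|_2^2+\|\mathcal V\|_p^p\big),
\]
by Lemma~\ref{lem6.1} with $s=\sigma p+2\in(2,p]$. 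This is the ``$s=2+\ldots$'' you anticipated.

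The constants must be fixed in this order:
first the fraction $r\|\mathcal V\|_p^p$ set aside in Step 4 (you need it so that $\|\mathcal V\|_p^p$ survives on the right-hand side);
then $k$ large;
then $\varepsilon$ small.

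With this correction your outline is the paper's proof. Your treatment of the delay term, absorbing $\|z(\cdot,1,t)\|_2^2$ into $H'$ through the $z$-part of \eqref{eq2.8}, is actually cleaner than the paper's constant-weight Young split. That split leaves a negative $z(\cdot,1,t)$ term and a cost $\frac{\varepsilon a_2 C_*^2}{2}\|\Delta\mathcal V\|_2^2$ that no choice of $k$ can compensate.

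Note, however, that Step 1 and this absorption both rely on Lemma~\ref{lem4} with $C>0$. That requires \eqref{eq2.6}, hence (A1). (A2) does no work in the argument, and you should say so explicitly rather than invoke it vaguely for ``coefficient balance''.
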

\begin{proof}
From \eqref{eq2.5}, it follows that  
\begin{equation}
E(t) \;\leq\; E(0) \;<\; 0 .
\end{equation}
So,
\begin{equation}\label{eq6.7}
\begin{split}
H'(t) = -E'(t)
& \geq \frac{b}{2} \int_{\Omega} \int_{-\infty}^{+\infty}\left(\alpha^{2}+\vartheta\right)|\mathscr{G}(x, \alpha, t)|^{2} d \alpha d x \geq 0.
\end{split}
\end{equation}
and
\begin{equation}\label{eq6.8}
0 < H(0) \;\leq\; H(t) \;\leq\; \frac{1}{p}\,\|\mathcal{V}(t)\|_{L^{p}(\Omega)}^{p}, 
\qquad t \geq 0 .
\end{equation}
Let
\begin{equation}\label{eq6.9}
A(t) = H^{1-\gamma} (t) + \epsilon \int_\Omega \mathcal{V} \mathcal{V}_t dx+\frac{a_{1} \epsilon}{2}\|\mathcal{V}\|_{2}^{2},
\end{equation}
After that, $\epsilon > 0$ must be stated, and further
The parameter $\gamma$ is restricted to the interval  
\begin{equation}\label{eq52}
\frac{2(p-2)}{p^{2}} \;<\; \gamma \;<\; \frac{p-2}{p} \;<\; 1 .
\end{equation}
Differentiating (\ref{eq6.9})
and using \eqref{2..1}, we get
\begin{equation*}
\begin{aligned}
A'(t) \;=\;&\; \epsilon \,\|\mathcal{V}_{t}\|_{2}^{2} 
- \epsilon \,\|\Delta \mathcal{V}\|_{2}^{2} 
+ (1-\gamma)\,H^{-\gamma}(t)\,H'(t) \\[6pt]
& - \frac{a_{1}\epsilon}{2}\,\|\mathcal{V}\|_{2}^{2} 
- b\epsilon \int_{\Omega} \mathcal{V}(x)\!\int_{\mathbb{R}} \beta(\alpha)\,\mathscr{G}(x,\alpha,t)\,d\alpha\,dx \\[6pt]
& + \epsilon \,\|\mathcal{V}\|_{p}^{p} 
+ \frac{a_{1}\epsilon}{2}\,\|\mathcal{V}\|_{2}^{2} 
- \epsilon a_{2} \int_{\Omega} \mathcal{V}(x)\,z(x,1,t)\,dx .
\end{aligned}
\end{equation*}
\begin{equation}\label{eq6.11}
\begin{aligned}
A'(t) \;=\;&\; \epsilon \,\|\mathcal{V}_{t}\|_{2}^{2} 
- \epsilon \,\|\Delta \mathcal{V}\|_{2}^{2} 
+ (1-\gamma)\,H^{-\gamma}(t)\,H'(t) \\[6pt]
& - b\epsilon \int_{\Omega} \mathcal{V}(x)\!\int_{\mathbb{R}} 
   \beta(\alpha)\,\mathscr{G}(x,\alpha,t)\,d\alpha\,dx \\[6pt]
& + \epsilon \,\|\mathcal{V}\|_{p}^{p} 
- \epsilon a_{2} \int_{\Omega} \mathcal{V}(x)\,z(x,1,t)\,dx .
\end{aligned}
\end{equation}
Apply Young's inequality on last two terms of \eqref{eq6.11} and using \eqref{eq6.7}, we get
\begin{equation}\label{eq:6.9}
\begin{split}
b \int_{\Omega} \mathcal{V} \int_{-\infty}^{+\infty} \beta(\alpha)\,\mathscr{G}(x,\alpha,t)\,d\alpha\,dx
&\le \delta\, bA_{0}\,\|\mathcal{V}\|_{2}^{2}
   +\frac{b}{4\delta}\int_{\Omega}\int_{-\infty}^{+\infty}(\alpha^{2}+\vartheta)\,|\mathscr{G}(x,\alpha,t)|^{2}\,d\alpha\,dx \\
&\le \delta\, bA_{0}\,\|\mathcal{V}\|_{2}^{2}
   +\frac{1}{2\delta}H'(t),
\end{split}
\end{equation}
where $A_{0}$ is the constant defined in \eqref{def.A_0}.\\
For \eqref{def.A_0} and $\delta > 0$, this may change depending on t\\
and\\
\begin{equation}
\epsilon a_2\int_\Omega \mathcal{V} z(x,1,t)\leq \frac{\epsilon a_2}{2}\|\mathcal{V}\|_{2}^{2}+\frac{\epsilon a_2}{2}\int_\Omega|z(x,1, t)|^{2} \
\end{equation}
which yields, by substitution in \eqref{eq6.11},
\begin{equation}\label{eq6.13}
\begin{split}
A'(t)
& \geq \biggl( (1-\gamma) H^{-\gamma}(t) - \frac{\epsilon}{2\delta} \biggl)  H' (t)\\ &+ \epsilon ||\mathcal{V}||_{p}^{p}+ \epsilon ||\mathcal{V}_t||_2^2-\frac{\epsilon a_2}{2}\int_\Omega|z(x,1, t)|^{2}
\\
& - \epsilon \delta A_0 ||\mathcal{V}||_2^2 - \epsilon ||\Delta \mathcal{V}||_2^2 -\frac{\epsilon a_2}{2}\|\mathcal{V}\|_{2}^{2}.
\end{split}
\end{equation}
At this stage, assumption (A2) ensures that the contribution of the delay term
cannot be absorbed by the dissipative terms, which is essential to derive a
positive lower bound for A'(t).\\
Assuming $\delta$ is such that $\frac{1}{2 \delta}=k H^{-\gamma}(t)$, and substituting in \eqref{eq6.13} for $k$ is large enough to be given later, we reach at
\begin{equation}\label{eq6.14}
\begin{split}
A'(t)
& \geq \biggl( (1-\gamma) H^{-\gamma}(t) - \frac{\epsilon}{2\delta} \biggl)  H' (t) \\ &-\frac{A_0 \epsilon}{2 k} H^{\gamma}(t)\|\mathcal{V}\|_{2}^{2}+ \epsilon ||\mathcal{V}||_{p}^{p}-\frac{\epsilon a_2}{2}\|\mathcal{V}\|_{2}^{2}
\\
&+ \epsilon ||\mathcal{V}_t||_2^2 - \epsilon ||\Delta \mathcal{V}||_2^2 -\frac{\epsilon a_2}{2}\int_\Omega|z(x,1, t)|^{2}.
\end{split}
\end{equation}
By using Poincaré's inequality, we reached at
\begin{equation}\label{eq6.14.}
\begin{split}
A'(t)
& \geq \biggl( (1-\gamma) H^{-\gamma}(t) - \frac{\epsilon}{2\delta} \biggl)  H' (t)\\&-\frac{A_0 \epsilon}{2 k} H^{\gamma}(t)\|\mathcal{V}\|_{2}^{2}+\epsilon||\mathcal{V}||_{p}^{p}
\\
& + \epsilon ||\mathcal{V}_t||_2^2 - \epsilon ||\Delta \mathcal{V}||_2^2 \\ &-\frac{\epsilon a_2C_{*}^2}{2}\|\Delta\mathcal{V}\|_{2}^{2}-\frac{\epsilon a_2}{2}\int_\Omega|z(x,1, t)|^{2}dx.
\end{split}
\end{equation}
To insert $\epsilon||\mathcal{V}||_{p}^{p}$ using $H(t)=-E(t)$ for $0 < r < 1$, (\ref{eq6.14.}) becomes:
\\
\begin{equation}
\begin{split}
A'(t)
& \geq \bigr[ (1-\gamma) - \epsilon k \bigr] H^{-\gamma}(t)H'(t) + \epsilon \Bigl( \frac{p(1-r)}{2}+1 \Bigl) ||\mathcal{V}_t||_2^2
\\
&+ \epsilon \Bigr[ \frac{ p(1-r)}{2}-\frac{ a_2 C_{*}^2}{2}-1 \Bigr] ||\Delta \mathcal{V}||_2^2+ \epsilon r ||\mathcal{V}||_p^p 
\\
& + \frac{\epsilon bp(1-r)}{2} \int_\Omega \int_{-\infty}^{+\infty} |\mathscr{G} (x,\alpha,t)|^2 d\alpha dx \\ & + \epsilon {p(1-r)} H(t)
 - \epsilon \frac{A_0}{2k} H^{\gamma}(t) ||\mathcal{V}||_{2}^{2}\\&+\epsilon p(1-r) v s \int_{\Omega} \int_{0}^{1}|z(x, \varrho, t)|^{2} d \varrho d x \\ &
+\frac{\epsilon a_2}{2}\int_{\Omega}|z(x, 1, t)|^{2} d x
\end{split}
\label{eq61}
\end{equation}
Using (\ref{eq52}), the result is,
\begin{equation}
\begin{dcases}
2 < p\gamma + 2 \leq p, \\
2 < \frac{\gamma p^2}{p - 2} \leq p.
\end{dcases}
\end{equation}
By lemma \ref{lem6.1},we have
\begin{equation}\label{eq6.17}
  H^{\gamma}(t)||\mathcal{V}||_2^{2} \leq c\Bigl({||\mathcal{V}||_p^p +||\Delta \mathcal{V}||_2^{2}}\Bigl).
\end{equation}
for some $c A_0=C_2 >$ 0. merging equations (\ref{eq6.17}) and (\ref{eq61}), we get
\begin{equation}
\begin{split}
A'(t)
& \geq \bigr[ (1-\gamma) - \epsilon k \bigr] H^{-\gamma}(t)H'(t) + \epsilon \Bigl( \frac{p(1-r)}{2}+1 \Bigl) ||\mathcal{V}_t||_2^2
\\
&\quad + \epsilon \Bigr[ \frac{ p(1-r)}{2}-\frac{ a_2 C_{*}^2}{2}+\frac{C_2}{2k}-1 \Bigr] ||\Delta \mathcal{V}||_2^2
\\
&\quad + \frac{\epsilon bp(1-r)}{2} \int_\Omega \int_{-\infty}^{+\infty} |\mathscr{G} (x,\alpha,t)|^2 d\alpha dx
\\
&\quad +\epsilon p(1-r) v s \int_{\Omega} \int_{0}^{1}|z(x, \varrho, t)|^{2} d \varrho d x\\& \quad +\frac{\epsilon a_2}{2}\int_{\Omega}|z(x, 1, t)|^{2} d x+ \epsilon {p(1-r)} H(t)\\ & \quad + \epsilon (r+\frac{C_2}{2k}) ||\mathcal{V}||_p^p 
\end{split}
\label{eq64}
\end{equation}
We now choose $r$ to be sufficiently small so that
$$
\frac{ p(1-r)}{2}+1 > 0.
$$
Then, with $r$ fixed, we choose k sufficiently big that
$$
\Bigr[ \frac{ p(1-r)}{2}-\frac{ a_2 C_{*}^2}{2}+\frac{C_2}{2k}-1 \Bigr]> 0,
\quad\Bigr(r+\frac{ C_2}{2k}\Bigr) > 0.
$$
We select a sufficiently small $\epsilon$ while k remains unchanged so that
\begin{align*}
&(1 - \gamma) - \epsilon k > 0, \\
& \hspace{-4.4cm}\text{and} \\
A(0) & = H^{1 - \gamma}(t) + \epsilon \int_{\Omega} \mathcal{V}_0 \mathcal{V}_1 \, dx + \frac{a_{2} \epsilon}{2} \|\mathcal{V}_0\|_{2}^{2} > 0.
\end{align*}
Thus, for some $C_3 >0$, estimate (\ref{eq64}) is
\begin{equation}
\begin{aligned}
\mathcal{A}'(t) \geq C_3 \biggl\{ & H(t) + \|\mathcal{V}_t\|_2^2 + \|\Delta \mathcal{V}\|_2^2 + \|\mathcal{V}\|_p^p + \int_{\Omega} |z(x, 1, t)|^2  dx \\
& + \int_\Omega \int_{-\infty}^{+\infty} |\mathscr{G}(x,\alpha,t)|^2  d\alpha  dx + v s \int_{\Omega} \int_{0}^{1} |z(x, \varrho, t)|^2  d\varrho  dx \biggr\}.
\end{aligned}
\label{eq65}
\end{equation}
and
\begin{equation}\label{GrindEQ__66_}
\mathcal{A}(t) \geq \mathcal{A}(0)>0, \quad t>0.
\end{equation}
On the other hand,we have
\begin{equation}   \label{eq68}
\begin{aligned}
\mathcal{A}^{\frac{1}{1-\gamma}}(t)= & \left[H^{1-\gamma}+\varepsilon \int_{\Omega} \mathcal{V} \mathcal{V}_{t} d x+\frac{a_{2} \epsilon}{2}\int_{\Omega}|\mathcal{V}|_{2} \right]^{\frac{1}{1-\gamma}}\\
    \leq& c \Big[ H(t) + \left| \int_{\Omega} \mathcal{V} \mathcal{V}_{t} \, dx \right|^{\frac{1}{1-\gamma}} + \|\mathcal{V}\|_{2}^{\frac{2}{1-\gamma}} \Big]
    \end{aligned}
\end{equation}
Then, using the embedding theorem and Holder's inequality, we get:
\begin{equation}
\left| \int_{\Omega} \mathcal{V} \mathcal{V}_{t} dx \right| \leq \|\mathcal{V}\|_{2} \cdot \left\|\mathcal{V}_{t}\right\|_{2} \leq C_4 \|\mathcal{V}\|_{p} \cdot \left\|\mathcal{V}_{t}\right\|_{2}.
\label{eq69}
\end{equation}
Young's inequality therefore gives us
\begin{equation}
\begin{aligned}
\left|\int_{\Omega} \mathcal{V} \mathcal{V}_{t} d x\right|^{\frac{1}{1-\gamma}} & \leq c\|\mathcal{V}\|_{p}^{\frac{1}{1-\gamma}} \cdot\left\|\mathcal{V}_{t}\right\|_{2}^{\frac{1}{1-\gamma}} \\
& \leq C_5\left[\|\mathcal{V}\|_{p}^{\frac{\mu}{1-\gamma}}+\left\|\mathcal{V}_{t}\right\|_{2}^{\frac{\theta}{1-\gamma}}\right] .
\end{aligned}
\end{equation}
where $\frac{1}{\theta}+\frac{1}{\theta_1}=1$.
we take $\theta_1=2(1-\gamma)$, to get
$$
\frac{\theta}{1-\gamma}=\frac{2}{1-2\gamma} \leq p.
$$
Therefore, if  $s=2 /(1-2 \gamma)$,then we get:
$$
\left|\int_{\Omega} \mathcal{V} \mathcal{V}_{t} d x\right|^{\frac{1}{1-\gamma}} \leq c\left[\|\mathcal{V}\|_{p}^{s}+\left\|\mathcal{V}_{t}\right\|_{2}^{2}\right].
$$
hence, the lemma \ref{lem6.1} gives
\begin{equation}\
\left|\int_{\Omega} \mathcal{V} \mathcal{V}_{t} d x\right|^{\frac{1}{1-\gamma}} \leq C_5\left[\|\Delta \mathcal{V}\|_{2}^{2}+\left\|\mathcal{V}_{t}\right\|_{2}^{2}+\|\mathcal{V}\|_{p}^{p}\right] .
\label{eq71}
\end{equation}
Putting (\ref{eq71}) in (\ref{eq68}) we have\\
\begin{equation}
\begin{gathered}
\mathcal{A}^{\frac{1}{1-\gamma}}(t) \leq C_7 \left[H(t) + \left|\int_{\Omega} \mathcal{V} \mathcal{V}_{t} dx\right|^{\frac{1}{1-\gamma}}+\|\Delta \mathcal{V}\|_{2}^{\frac{2}{1-\gamma}}\right].\\
\leq C_8\left[H(t)+\|\Delta \mathcal{V}\|_{2}^{2}+\|\mathcal{V}_{t}\|_{2}^{2}+\|\mathcal{V}\|_{p}^{p}\right].
\end{gathered}
\label{eq72}
\end{equation}
$$
$$
From (\ref{eq65}) and (\ref{eq72}), we get
\begin{equation}
\mathcal{A}^{\prime}(t) \geq D \mathcal{A}^{\frac{1}{1-\gamma}}(t).
\label{eq73}
\end{equation}
where the constants $C_3,C_4,C_5,C_6,C_7,C_8,D >0$. Through a straightforward integration of (\ref{eq73}), we arrive at
$$
\mathcal{A}^{\frac{\gamma}{1-\gamma}}(t) \geq \frac{1}{\mathcal{K}^{\frac{-\gamma}{1-\gamma}}(0)-D \frac{\gamma}{(1-\gamma)} t}.
$$
Therefore, $\mathcal{A}(t)$ blows-up in time
$$
T \leq T^{*}=\frac{1-\gamma}{D \gamma \mathcal{A}^{\gamma /(1-\gamma)}(0)}.
$$
The proof is now complete.\\
\end{proof}
\section{Numerical Approximation}
\label{sec:numerical}
In this section, we present a comprehensive numerical scheme to approximate solutions of the complete plate equation with fractional damping, time delay, and polynomial nonlinearity as given in \eqref{1.1}. The aim is to validate the theoretical results concerning stability, energy decay, and finite-time blow-up through computational experiments, capturing the complex interplay between fractional dissipation, delayed feedback, and nonlinear effects.

We consider the full one-dimensional version of \eqref{1.1}:
\begin{equation}
\mathcal{V}_{tt} + \mathcal{V}_{xxxx} + \partial_t^{\theta,\vartheta}\mathcal{V} + a_1 \mathcal{V}_t + a_2 \mathcal{V}_t(x,t-s) = \mathcal{V}|\mathcal{V}|^{p-2}, \quad x \in (0,L), \quad t>0,
\label{eq:full}
\end{equation}
with clamped boundary conditions:
\begin{equation}
\mathcal{V}(0,t) = \mathcal{V}(L,t) = 0, \quad \mathcal{V}_x(0,t) = \mathcal{V}_x(L,t) = 0,
\label{eq:bc}
\end{equation}
initial conditions:
\begin{equation}
\mathcal{V}(x,0) = \mathcal{V}_0(x), \quad \mathcal{V}_t(x,0) = \mathcal{V}_1(x),
\label{eq:ic}
\end{equation}
and history function for the delay term:
\begin{equation}
\mathcal{V}_t(x,t-s) = f_0(x,t-s), \quad x \in \Omega, \; t \in (0,s).
\label{eq:history}
\end{equation}
The spatial discretization is carried out using Hermite cubic finite elements, which naturally provide the required $C^1$-continuity for fourth-order problems. For temporal discretization, we employ a $\beta$-Newmark scheme that preserves the energy structure of the system. The fractional derivative is approximated using the extended variables approach following Komornik et al. \cite{KomornikSepulvedaVera2025}, which provides an energy-consistent discretization. The delay term is handled via linear interpolation that maintains consistency with the Newmark scheme.

The numerical approximation of fractional derivatives in our scheme follows the extended variables approach originally introduced by Mbodje \cite{MBO} and subsequently developed in several recent works \cite{KomornikSepulvedaVera2025, FA, AmmariKomornikSepulvedaVera2025b, AmmariKomornikSepulvedaVera2026,  AslamHajjejHaoSepulveda2025}. This method provides an energy-consistent discretization that preserves the dissipative structure of the fractional operator while maintaining computational efficiency.
\subsection{Temporal discretization: The Newmark scheme with fractional and delay terms}
\label{subsec:newmark_full}
Let $0 = t_0 < t_1 < \dots < t_M = T$ be a uniform partition of the time interval $[0,T]$ with step size $\Delta t = T/M$. Denote $\mathcal{V}^n \approx \mathcal{V}(x, t_n)$, $\mathcal{V}_t^n \approx \mathcal{V}_t(x, t_n)$, and $\mathcal{V}_{tt}^n \approx \mathcal{V}_{tt}(x, t_n)$. We assume the delay $s$ is a multiple of $\Delta t$, i.e., $s = m\Delta t$ for some integer $m \geq 1$.

The Newmark method approximates the system \eqref{eq:full} at time $t_{n+1}$ as follows:
\begin{align}
\mathcal{V}_t^{n+1} &= \mathcal{V}_t^n + (1-\gamma)\Delta t\,\mathcal{V}_{tt}^n + \gamma\Delta t\,\mathcal{V}_{tt}^{n+1}, \label{eq:newmark1_full} \\
\mathcal{V}^{n+1} &= \mathcal{V}^n + \Delta t \mathcal{V}_t^n + \left(\frac{1}{2}-\beta\right)\Delta t^2\,\mathcal{V}_{tt}^n + \beta\Delta t^2\,\mathcal{V}_{tt}^{n+1}, \label{eq:newmark2_full}
\end{align}
where $\beta$ and $\gamma$ are parameters controlling stability and numerical damping. For energy conservation properties, we choose $\beta = \frac14$, $\gamma = \frac12$ (trapezoidal rule). The equation of motion is enforced at $t_{n+1}$:
\begin{equation}
\mathcal{V}_{tt}^{n+1} + \mathcal{V}_{xxxx}^{n+1} + (\partial_t^{\theta,\vartheta}\mathcal{V})^{n+1} + a_1 \mathcal{V}_t^{n+1} + a_2 \mathcal{V}_t^{n+1-m} = \mathcal{V}^{n+1} |\mathcal{V}^{n+1}|^{p-2}. \label{eq:motion_full}
\end{equation}
\subsubsection{Discretization of the fractional derivative via extended variables}
Following Komornik et al. \cite{KomornikSepulvedaVera2025}, we approximate the fractional damping term using the extended variables approach. Let $R > 0$ be a truncation parameter and $M$ a positive integer. Define $\xi_\ell = \ell\Delta\xi$ for $\ell = 1,\dots,M$, where $\Delta\xi = R/M$, and
\begin{equation}
\mu_\ell = |\xi_\ell|^{(2\theta-1)/2}, \quad \ell=1,\dots,M, \quad 0 < \theta < 1.
\label{eq:mu_definition}
\end{equation}
Introduce auxiliary variables $\mathcal{G}_\ell^n(x) \approx \mathcal{G}(x,\xi_\ell,t_n)$ satisfying the differential equation:
\begin{equation}
\mathcal{G}_{t}(x,\xi,t) + (\xi^2 + \vartheta)\mathcal{G}(x,\xi,t) = \mu(\xi)\mathcal{V}_t(x,t).
\label{eq:auxiliary_eq}
\end{equation}
The fractional derivative is then approximated by:
\begin{equation}
(\partial_t^{\theta,\vartheta}\mathcal{V})^{n} \approx b \sum_{\ell=1}^{M} \mu_\ell \mathcal{G}_\ell^n \Delta\xi,
\label{eq:frac_approx}
\end{equation}
where $b = \frac{\sin(\theta\pi)}{\pi} a_1$ as defined in Lemma 2.1.

Discretizing \eqref{eq:auxiliary_eq} using the Crank-Nicolson scheme to preserve energy conservation, we obtain:
\begin{equation}
\mathcal{G}_\ell^{n+1} = \mathcal{G}_\ell^{n} - \Delta t (\xi_\ell^2 + \vartheta) \mathcal{G}_\ell^{n+1/2} + \Delta t \mu_\ell \mathcal{V}_t^{n+1/2},
\label{eq:auxiliary_disc}
\end{equation}
where $\mathcal{G}_\ell^{n+1/2} = \frac{1}{2}(\mathcal{G}_\ell^{n} + \mathcal{G}_\ell^{n+1})$ and $\mathcal{V}_t^{n+1/2} = \frac{1}{2}(\mathcal{V}_t^{n} + \mathcal{V}_t^{n+1})$.
\subsubsection{Discretization of the delay term}
The delayed velocity term $\mathcal{V}_t^{n+1-m}$ is approximated using linear interpolation consistent with the Newmark scheme:
\begin{equation}
\mathcal{V}_t^{n+1-m} = \mathcal{V}_t^{n-m} + \frac{\Delta t}{2}(\mathcal{V}_{tt}^{n-m} + \mathcal{V}_{tt}^{n+1-m}), \label{eq:delay_approx}
\end{equation}
which maintains second-order accuracy. For $n < m$, the history function $f_0$ provides the required past values.
\subsubsection{Complete discrete system}
Let $\mathbf{Q}^n$ denote the vector of nodal degrees of freedom at time $t_n$, and $\mathbf{G}_\ell^n$ the corresponding discretization of $\mathcal{G}_\ell^n$. The semidiscrete Galerkin formulation leads to:
\begin{equation}
\mathbf{M} \ddot{\mathbf{Q}} + a_1\mathbf{M} \dot{\mathbf{Q}} + \mathbf{K} \mathbf{Q} + b\sum_{\ell=1}^{M} \mu_\ell \mathbf{G}_\ell \Delta\xi + a_2\mathbf{M} \dot{\mathbf{Q}}^{n-m} = \mathbf{F}(\mathbf{Q}), \label{eq:semidiscrete_full}
\end{equation}
where $\mathbf{M}$ is the mass matrix, $\mathbf{K}$ the stiffness matrix for the biharmonic operator, and $\mathbf{F}(\mathbf{Q})$ the nonlinear force vector given by
\begin{equation}
\mathbf{F}(\mathbf{Q}) = \int_0^L \mathcal{V}_h |\mathcal{V}_h|^{p-2} \boldsymbol{\phi} , dx,
\label{eq:nonlinear_force_vector}
\end{equation}
with $\boldsymbol{\phi}$ being the vector of Hermite shape functions. The integral is computed element-wise using Gaussian quadrature.

Applying the Newmark formulas \eqref{eq:newmark1_full}--\eqref{eq:newmark2_full} and the fractional discretization \eqref{eq:frac_approx}--\eqref{eq:auxiliary_disc}, we obtain the complete discrete system
(see \cite{AslamHajjejHaoSepulveda2025}):
\begin{equation}
\begin{cases}
\left(\mathbf{M} + \gamma\Delta t\,\left(a_1\mathbf{M}+\mathbf{C}_{augm}\right) + \beta\Delta t^2\mathbf{K}\right)\ddot{\mathbf{Q}}^{n+1}  + a_2\mathbf{M} \dot{\mathbf{Q}}^{n+1-m} \\
\quad = -2 b\sum\limits_{\ell=1}^{M} \widetilde{\mu}_\ell \mathbf{G}_\ell^{n} \Delta\xi + \mathbf{F}(\mathbf{Q}^{n+1}) - \mathbf{K}\left(\mathbf{Q}^n + \Delta t \dot{\mathbf{Q}}^n + \left(\frac{1}{2}-\beta\right)\Delta t^2 \ddot{\mathbf{Q}}^n\right) \\
\qquad - a_1\mathbf{M} \left(
\dot{\mathbf{Q}}^n + \left( 1 - {\gamma} \right)
\delta t \ddot{\mathbf{Q}}^n
\right) 
- \mathbf{C}_{augm} \left(
2\dot{\mathbf{Q}}^n + \left( 1 -{\gamma} \right)
\delta t \ddot{\mathbf{Q}}^n
\right)
\\[8pt]
\mathbf{G}_\ell^{n+1} = \dfrac{2 - \Delta t(\xi_\ell^2 + \vartheta)}{2 + \Delta t(\xi_\ell^2 + \vartheta)} \mathbf{G}_\ell^{n} + \dfrac{2\Delta t\mu_\ell}{2 + \Delta t(\xi_\ell^2 + \vartheta)} \dot{\mathbf{Q}}^{n+1/2}, \quad \ell=1,\dots,M,
\end{cases}
\label{eq:complete_discrete}
\end{equation}
where  $\widetilde{\mu}_\ell=\dfrac{2-\delta t\left(\xi_\ell^2 +\eta\right)}{2+\delta t\left(\xi_\ell^2 +\eta\right)}
\mu_\ell$,\ $\mathbf{C}_{augm}=\Delta t b \left(\displaystyle\sum_{\ell=1}^M
\dfrac{2{\mu}_\ell^2\delta\xi}{2+\delta t\left(\xi_\ell^2 +\eta\right)}\right)\textbf{M} $,
and $\dot{\mathbf{Q}}^{n+1/2} = \frac{1}{2}(\dot{\mathbf{Q}}^{n} + \dot{\mathbf{Q}}^{n+1})$.
\subsubsection{Discrete energy decay}
Following the approach in \cite{KomornikSepulvedaVera2025}, we define the discrete energy:
\begin{align}
E_\Delta^n \ = & \frac{1}{2} (\dot{\mathbf{Q}}^n)^T \mathbf{M} \dot{\mathbf{Q}}^n + \frac{1}{2} (\mathbf{Q}^n)^T \mathbf{K} \mathbf{Q}^n + \frac{b}{2} \sum_{\ell=1}^{M} \mu_\ell \|\mathbf{G}_\ell^n\|^2 \Delta\xi 
\nonumber\\
 &+ \frac{a_2}{2} \sum_{j=n-m}^{n-1} \|\dot{\mathbf{Q}}^j\|_{\mathbf{M}}^2
 - \frac{1}{p} \int_0^L |\mathcal{V}_h^n|^p dx,
\label{eq:discrete_energy}
\end{align}
where $\|\cdot\|_{\mathbf{M}}$ denotes the norm induced by the mass matrix,
and where the last term represents the discrete potential energy associated with the nonlinearity. The scheme \eqref{eq:complete_discrete} satisfies the discrete energy decay estimate:
\begin{equation}
\begin{aligned}
E_\Delta^{n+1} - E_\Delta^n &= -b \sum_{\ell=1}^{M} (\xi_\ell^2 + \vartheta) \|\mathbf{G}_\ell^{n+1/2}\|^2 \Delta\xi - a_1 \|\dot{\mathbf{Q}}^{n+1/2}\|_{\mathbf{M}}^2 \\
&\quad - a_2 \left( \dot{\mathbf{Q}}^{n+1/2} \right)^T \mathbf{M} \dot{\mathbf{Q}}^{n+1-m} \Delta t \\
&\quad + \left( \mathbf{F}(\mathbf{Q}^{n+1}) \right)^T \dot{\mathbf{Q}}^{n+1/2} \Delta t - \left( \frac{1}{p} \int_0^L |\mathcal{V}_h^{n+1}|^p dx - \frac{1}{p} \int_0^L |\mathcal{V}_h^{n}|^p dx \right) \\
&\quad + \frac{a_2}{2} \left( \|\dot{\mathbf{Q}}^{n}\|_{\mathbf{M}}^2 - \|\dot{\mathbf{Q}}^{n-m}\|_{\mathbf{M}}^2 \right).
\end{aligned}
\label{eq:energy_decay_raw}
\end{equation}
If the nonlinear term is discretized consistently with the potential, i.e.,
\begin{equation}
\left( \mathbf{F}(\mathbf{Q}^{n+1}) \right)^T \dot{\mathbf{Q}}^{n+1/2} \Delta t = \frac{1}{p} \int_0^L |\mathcal{V}_h^{n+1}|^p dx - \frac{1}{p} \int_0^L |\mathcal{V}_h^{n}|^p dx + \mathcal{O}(\Delta t^2),
\label{eq:nonlinear_consistency}
\end{equation}
then the nonlinear contributions cancel up to higher-order terms. Moreover, under a stability condition on $a_2$ (e.g., $a_2$ sufficiently small), the delay term does not cause energy growth. Consequently, we obtain the energy decay estimate:
\begin{equation}
E_\Delta^{n+1} - E_\Delta^n \leq -b \sum_{\ell=1}^{M} (\xi_\ell^2 + \vartheta) |\mathbf{G}\ell^{n+1/2}|^2 \Delta\xi - a_1 |\dot{\mathbf{Q}}^{n+1/2}|{\mathbf{M}}^2 \leq 0,
\label{eq:energy_decay}
\end{equation}
which mimics the continuous energy decay \eqref{eq2.12} and ensures numerical stability.
\subsection{Spatial discretization: Hermite cubic finite elements}
\label{subsec:hermite}
For the spatial discretization of the biharmonic operator 
$\Delta^2\mathcal{V}$, we utilize Hermite cubic finite elements. 
These elements naturally enforce $C^1$ continuity across element 
boundaries, which is essential for the weak formulation involving 
second derivatives \cite{BrennerScott08,Reddy07}. The standard 
Hermite cubic basis functions on the reference element $[0,1]$  follow the 
classical formulation \cite{StrangFix73,Hughes87}.
\subsubsection{Mesh and basis functions}
Let the domain $[0,L]$ be divided into $N$ uniform elements of length $h = L/N$. The nodes are located at $x_i = (i-1)h$, $i=1,\dots,N+1$. At each node $x_i$, we associate two degrees of freedom: the value of the solution $q_i = \mathcal{V}(x_i)$ and the value of its first derivative $r_i = \mathcal{V}_x(x_i)$. Thus, the total number of degrees of freedom is $2(N+1)$.

On the reference element $[0,1]$ with coordinate $\xi = (x - x_i)/h$, the Hermite cubic shape functions are:
\begin{align}
\phi_0(\xi) &= 1 - 3\xi^2 + 2\xi^3, \label{eq:phi0} \\
\phi_1(\xi) &= h\big(\xi - 2\xi^2 + \xi^3\big), \label{eq:phi1} \\
\phi_2(\xi) &= 3\xi^2 - 2\xi^3, \label{eq:phi2} \\
\phi_3(\xi) &= h\big(-\xi^2 + \xi^3\big). \label{eq:phi3}
\end{align}
These satisfy:
\begin{align*}
\phi_0(0)=1,\; \phi_0'(0)=0,\; \phi_0(1)=0,\; \phi_0'(1)=0; & \quad
\phi_1(0)=0,\; \phi_1'(0)=1,\; \phi_1(1)=0,\; \phi_1'(1)=0; \\
\phi_2(0)=0,\; \phi_2'(0)=0,\; \phi_2(1)=1,\; \phi_2'(1)=0; & \quad
\phi_3(0)=0,\; \phi_3'(0)=0,\; \phi_3(1)=0,\; \phi_3'(1)=1.
\end{align*}
The approximation on element $e = [x_i, x_{i+1}]$ is:
\begin{equation}
\mathcal{V}_h^e(\xi,t) = q_i \phi_0(\xi) + r_i \phi_1(\xi) + q_{i+1} \phi_2(\xi) + r_{i+1} \phi_3(\xi). \label{eq:approx_element}
\end{equation}
\subsubsection{Weak formulation}
The weak form of \eqref{2..1} in his one-dimensional version is obtained by multiplying by a test function $\psi \in H_0^2(0,L)$ and integrating by parts twice:
\begin{align*}
&\int_0^L \mathcal{V}_{tt} \psi \, dx + \int_0^L \mathcal{V}_{xx} \psi_{xx} \, dx 
+ b\int_0^L \int_{-\infty}^{+\infty}\mathscr{G}(x,\alpha,t)\beta(\alpha)
\psi \,d\alpha
 dx \nonumber
\\
& \qquad\qquad + a_1 \int_0^L \mathcal{V}_t \psi \, dx 
+  \int_0^L a_2z(x,1,t)\psi \, dx
= \int_0^L \mathcal{V}|\mathcal{V}|^{p-2} \psi \, dx. 
\\
& \int_0^L\mathscr{G}_{t}(x, \alpha, t)\psi \, dx+
\left(\alpha^{2}+\vartheta\right) \int_0^L\mathscr{G}(x, \alpha, t)\psi \, dx
\nonumber\\
&\qquad\qquad - \beta(\alpha)\int_0^L z(x, 1, t) \beta(\alpha)\psi \, dx=0 
\\
& \int_0^L \left(s z_{t}(x, \varrho, t)+z_{\varrho}(x, \varrho, t)\right)\psi \, dx =0 
\end{align*}
Substituting $\mathcal{V}_h$, $\mathscr{G}_h$, $z_h$ and $\psi = \phi_j$ (one of the basis functions) leads to the discrete system \eqref{eq:semidiscrete_full}.
\subsubsection{Matrix assembly}
The mass matrix $\mathbf{M}$ and stiffness matrix $\mathbf{K}$ are assembled elementwise. For a single element $e$, the element matrices are:
\begin{align}
M_{ij}^e &= \int_0^1 \phi_i(\xi) \phi_j(\xi) \, h \, d\xi, \label{eq:M_element} \\
K_{ij}^e &= \int_0^1 \phi_i''(\xi) \phi_j''(\xi) \, \frac{1}{h^3} \, d\xi, \label{eq:K_element}
\end{align}
where $\phi_i''$ denotes the second derivative with respect to $x$. Using \eqref{eq:phi0}--\eqref{eq:phi3}, these integrals can be computed analytically or by numerical quadrature. The global matrices are obtained by summing contributions from all elements, respecting the connectivity of degrees of freedom.

The nonlinear force vector $\mathbf{F}(\mathbf{Q})$ is computed via numerical integration (e.g., Gaussian quadrature) on each element:
\begin{equation}
F_i = \int_0^L \mathcal{V}_h |\mathcal{V}_h|^{p-2} \phi_i \, dx. \label{eq:nonlinear_force}
\end{equation}
\subsubsection{Boundary conditions}
Clamped boundary conditions \eqref{eq:bc} are imposed by setting:
\[
q_1 = r_1 = 0, \quad q_{N+1} = r_{N+1} = 0,
\]
and eliminating these degrees of freedom from the system.
\subsection{Algorithm summary}
\label{subsec:algorithm}
The complete numerical algorithm for solving \eqref{eq:full}--\eqref{eq:ic} is as follows:
\begin{algorithm}[htbp]
\caption{Complete Numerical Scheme for Plate Equation with Fractional Damping, Delay, and Nonlinearity}
\label{alg:full_numerical_scheme}
\SetAlgoLined
\DontPrintSemicolon
\KwIn{Parameters: $L, T, N, M, \Delta t, \Delta\xi, a_1, a_2, \theta, \vartheta, p$,\\
Initial conditions $\mathcal{V}_0$, $\mathcal{V}_1$,\\
History function $f_0$}
\KwOut{Discrete solution $\{\mathbf{Q}^n, \dot{\mathbf{Q}}^n\}_{n=0}^{N_t}$, energy $\{E_\Delta^n\}_{n=0}^{N_t}$}

\textbf{Initialization:}\;
Construct $\mathbf{M}$ and $\mathbf{K}$ using Hermite cubic finite elements\;
Interpolate initial conditions: $\mathbf{Q}^0 \gets \Pi_h\mathcal{V}_0$, $\dot{\mathbf{Q}}^0 \gets \Pi_h\mathcal{V}_1$\;
Initialize auxiliary variables: $\mathbf{G}_\ell^0 \gets \mathbf{0}$ for $\ell = 1,\dots,M$\;
Compute $\mu_\ell = |\xi_\ell|^{(2\theta-1)/2}$ for $\ell = 1,\dots,M$\;
Initialize delay manager with $s = m\Delta t$ and history $f_0$\;
Initialize buffer for velocity norms and cumulative sum $S \gets 0$\;
Compute initial energy $E_\Delta^0$ using \eqref{eq:discrete_energy}\;

\For{$n \gets 0$ \KwTo $N_t-1$}{
    Get delayed velocity: $\dot{\mathbf{Q}}^{n+1-m} \gets \text{DelayManager.get}(\dot{\mathbf{Q}}^n)$\;
    
    \textbf{Solve nonlinear system} for $\ddot{\mathbf{Q}}^{n+1}$:\;
    \While{not converged}{
        Compute nonlinear force: $\mathbf{F}(\mathbf{Q}^{n+1})$\;
        Assemble RHS using \eqref{eq:complete_discrete}\;
        Solve $\mathbf{A}\ddot{\mathbf{Q}}^{n+1} = \mathbf{b}_{NL}$\;
        Update $\mathbf{Q}^{n+1}$, $\dot{\mathbf{Q}}^{n+1}$ via Newmark formulas\;
    }
    
    \textbf{Update auxiliary variables}:\;
    \For{$\ell \gets 1$ \KwTo $M$}{
        $\mathbf{G}_\ell^{n+1} \gets \dfrac{2 - \Delta t(\xi_\ell^2 + \vartheta)}{2 + \Delta t(\xi_\ell^2 + \vartheta)} \mathbf{G}_\ell^{n} + \dfrac{2\Delta t\mu_\ell}{2 + \Delta t(\xi_\ell^2 + \vartheta)} \dot{\mathbf{Q}}^{n+1/2}$\;
    }
    
    \textbf{Update energy with delay term}:\;
    Compute current norm: $N_{\text{curr}} \gets (\dot{\mathbf{Q}}^{n+1})^T\mathbf{M}\dot{\mathbf{Q}}^{n+1}$\;
    Update cumulative sum $S$ using circular buffer\;
    Compute $E_\Delta^{n+1}$ including $\frac{a_2}{2}S$ term\;
    
    \textbf{Update delay manager} with $\dot{\mathbf{Q}}^{n+1}$\;
}

\Return{$\{\mathbf{Q}^n, \dot{\mathbf{Q}}^n, E_\Delta^n\}_{n=0}^{N_t}$}
\end{algorithm}

This numerical framework provides a reliable tool to simulate the behavior of the nonlinear plate equation, allowing us to observe both exponential decay (for positive damping and bounded initial energy) and finite-time blow-up (for negative initial energy), thereby corroborating the analytical findings of Sections \ref{S5} and \ref{sec6}.

\subsection{Numerical examples}

Throughout the following examples, we fix the domain length $L=1$ and choose the parameters
\[
a_1 = 1.0,\quad a_2 = 0.08,\quad s = 5,\quad \theta = 0.5,\quad \vartheta = 0.3,
\]
which satisfy condition (A1). For the spatial discretization we use a uniform mesh of $N=250$ nodes, resulting in $N_e=249$ elements and $N_h=496$ degrees of freedom in the Hermite polynomial basis.

The initial data are chosen as  
\[
\mathcal{V}_0(x) = \lambda\,x^2(1-x)^2,\qquad 
\mathcal{V}_1(x) = 0,\qquad 
f_0(x,t-s)=0,
\qquad 0<x<L,\;0<t<s,
\]  
so that \(U_0 \in D(A)\). The parameter \(\lambda\) controls the initial energy \(E(0)\). According to Theorem \ref{T59}, if \(0 < E(0) < d\), where  
\[
d = \frac{p-2}{2p}\left(C^*_p\right)^{\frac{2}{2-p}},
\]  
with $ C^*_p=\dfrac{L^2}{\pi^2}$
and condition \eqref{eq4.1} holds, then the energy decays exponentially. Conversely, if \(E(0) < 0\), finite‑time blow‑up may occur.

Table~\ref{table1} lists, for several values of \(p\), the critical values \(\lambda_c\) (where \(E(0)=0\)) and \(\lambda_d\) (where \(E(0)=d\)). For a fixed \(p\), the solution’s asymptotic behavior depends on \(\lambda\) as follows:  
\begin{itemize}
    \item If \(\lambda < \lambda_d(p)\), then \(E(0) > d\); the hypotheses of Theorem \ref{T59} are not satisfied, and the asymptotic behavior is not determined by that theorem.
    \item If \(\lambda_d(p) < \lambda < \lambda_c(p)\), then \(0 < E(0) < d\); exponential decay is expected.
    \item If \(\lambda > \lambda_c(p)\), then \(E(0) < 0\); finite‑time blow‑up may occur.
\end{itemize}
\begin{table}[htbp]
\centering
\label{tab:lambda_critical}
\begin{tabular}{c|c|c|c}
$p$ & $\lambda_c$ & $ d=\frac{p-2}{2p}\left(C^*_p\right)^{\frac{2}{2-p}}$ & $\lambda_d$\\ \hline
3 & 14414.4 & 16.2348 & 6.372 \\
4 & 591.66 & 2.4674	& 2.484 \\
5 & 198.15 & 1.3788	& 1.8567 \\
6 & 112.87 & 1.0472	& 1.6180\\
7 & 79.90 & 0.8467	 & 1.455\\
8 & 63.15 & 0.806	& 1.419\\
9 & 53.21 & 0.688	& 1.311\\
\end{tabular}
\caption{Critical values of $\lambda$ for which $E(0)=0$ and $E(0)=d$}
\label{table1}
\end{table}
In the simulations below we select specific values of \(\lambda\) and \(p\) to illustrate both the exponential decay and the blow‑up regimes.
\subsubsection{Example 1: Exponential stabilty.}
We set the parameters $\lambda=1$ and $p=5$, yielding an initial energy of $E(0)\approx 0.400127$, which is below the critical threshold $E_c \approx 1.3788$ (see Table \ref{table1}). This configuration satisfies the hypotheses of Theorem~\ref{T59}. In addition, we chose $\theta=0.5$, $\vartheta=0.3$,
$a_1=5.0$, and $a_2=0.4$
 to verify (A1).

Figure \ref{fig0} demonstrates the exponential decay of the total energy (right panel) and the exponential stability of the solution. The displacement of the beam, represented by $\mathcal{V}(x,t)$, decays exponentially over time (left panel). A transient energy increase is observed during the initial 5 time units, prior to the activation of the delay term. For $t>5$, the energy exhibits clear exponential decay, as evidenced by the nearly constant slope in the semi-logarithmic plot.

\begin{figure}[htbp]
    \centering
    \includegraphics[width=0.49\linewidth]{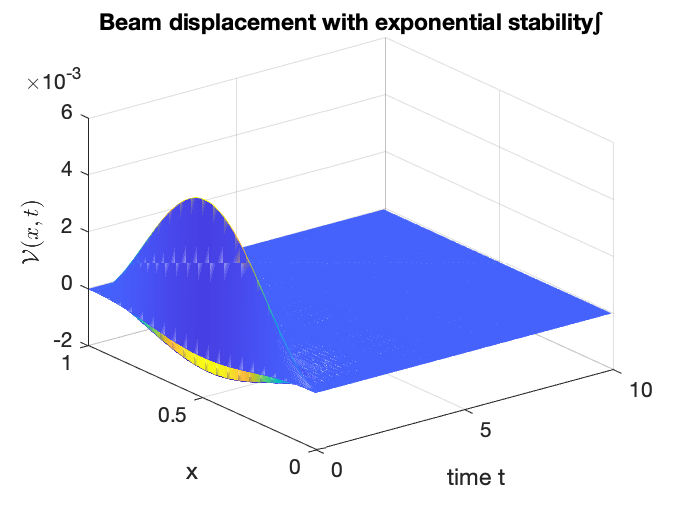}
    \includegraphics[width=0.49\linewidth]{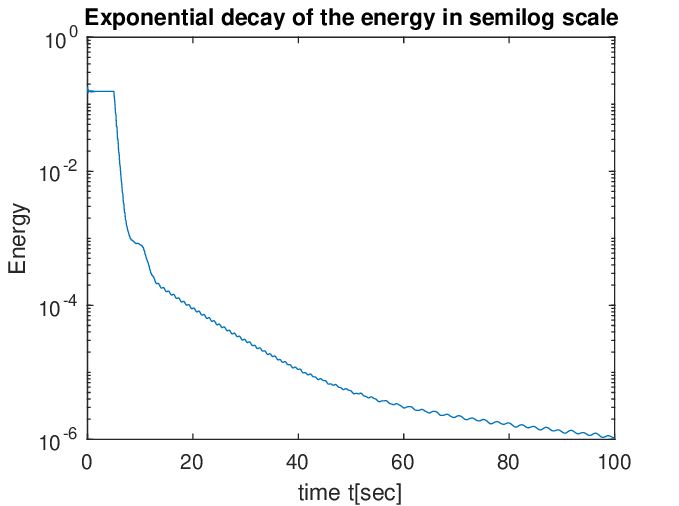}
    \caption{Exponential stability of the solution. Left: Evolution of displacement $\mathcal{V}(x,t)$. Right: Energy decay in semi-logarithmic scale.}
    \label{fig0}
\end{figure}
\subsubsection{Example 2: Finite-Time Blow-Up.}
Choosing $\lambda=200$ and $p=5$, we obtain the initial energy 
$E(0)\approx -496.4864 <0$, which satisfies the assumptions of 
Theorem~\ref{thm6.2} (see Table \ref{table1}). Consequently, the solution exhibits a finite-time 
blow-up. Numerical simulations confirm this behavior, showing a blow-up 
occurring very close to $t=0.04$; see Figure~\ref{fig1} for the rapid 
growth of the energy near that time.

\begin{figure}[htbp]
    \centering
    \includegraphics[width=0.5\linewidth]{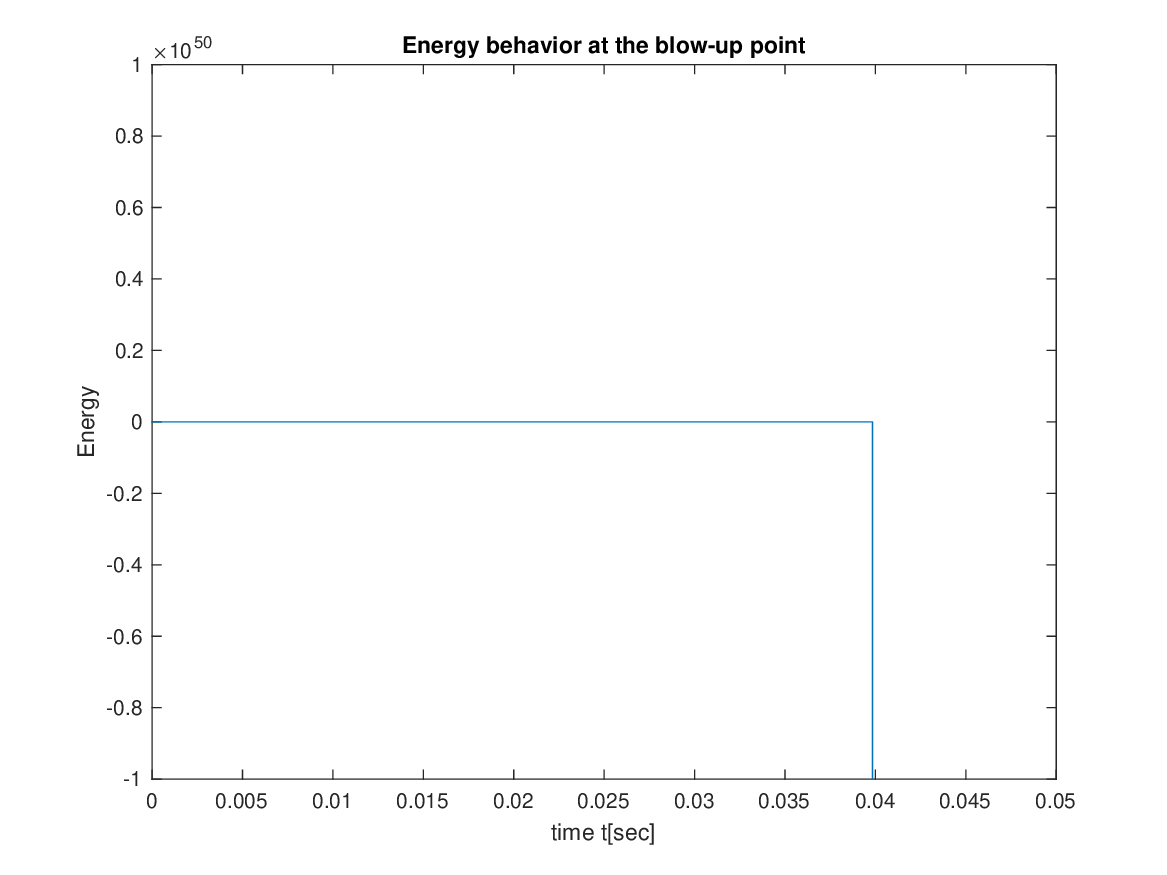}
    \caption{Energy growth near the blow-up time $t\approx 0.04$.}
    \label{fig1}
\end{figure}
Figure~\ref{fig2} displays the displacement profile immediately before 
the blow-up. The left panel shows the spatio-temporal evolution of the 
solution from the initial time until approximately $10^{-4}$ seconds 
before the blow-up. The right panel focuses on the last $4\times10^{-4}$ 
seconds. The extreme gradient developed during this brief interval 
illustrates the intrinsic difficulty of capturing the blow-up 
numerically; indeed, our scheme fails to compute the solution beyond 
this point.
\begin{figure}[htbp]
    \centering
    \includegraphics[width=0.49\linewidth]{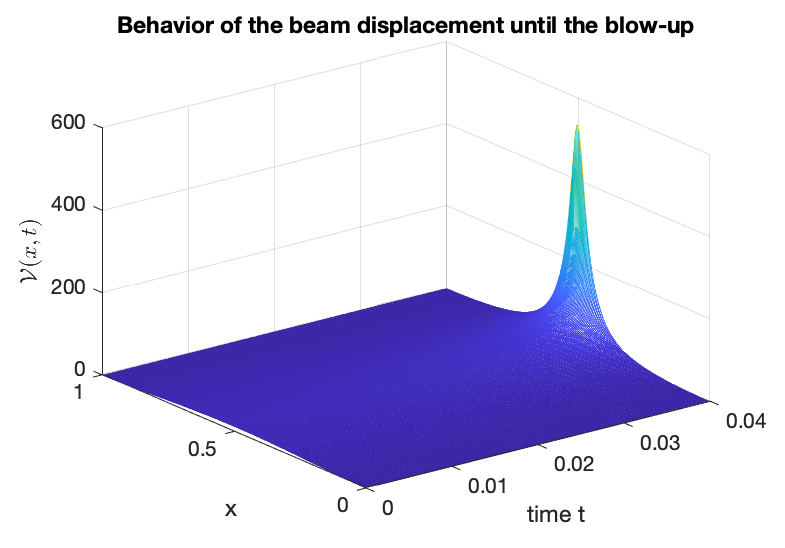}
    \includegraphics[width=0.49\linewidth]{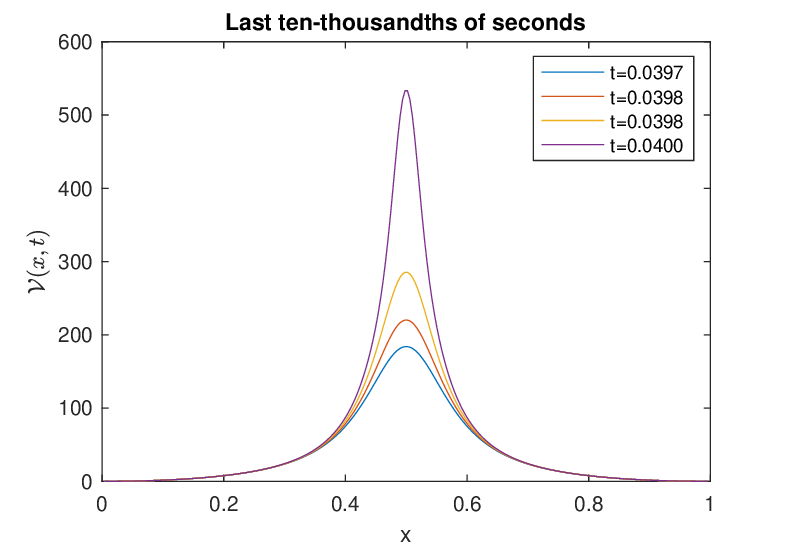}
    \caption{Beam displacement shortly before blow-up. Left: full 
             evolution until $t\approx 0.04$. Right: detailed view of the 
             last $4\times10^{-4}$ seconds.}
    \label{fig2}
\end{figure}
\subsubsection{Example 3: Uncertain behavior under insufficient stability conditions}

\begin{figure}[htbp]
    \centering
    \includegraphics[width=0.49\linewidth]{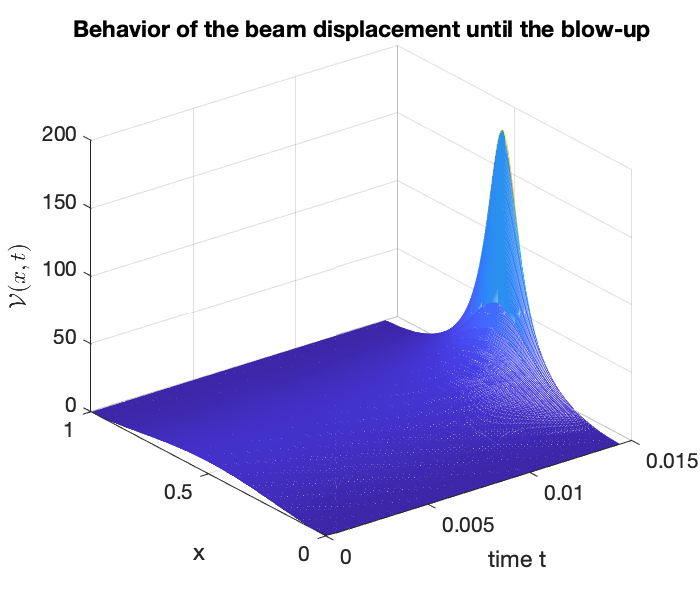}
    \includegraphics[width=0.49\linewidth]{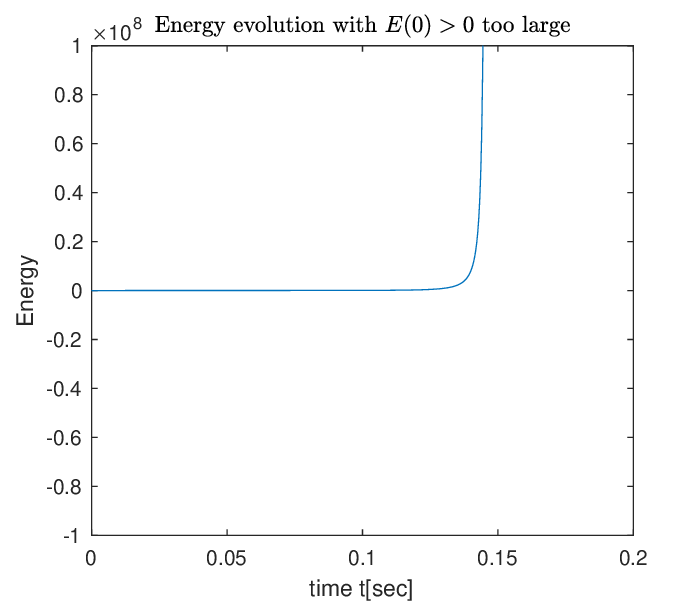}
    \caption{Violation of condition \eqref{eq4.1}. Left: displacement evolution until blow-up ($t \approx 0.04$). Right: corresponding energy evolution.}
    \label{fig3}
\end{figure}
This example demonstrates the dynamical response when neither the exponential stability conditions nor the blow-up condition are satisfied, leading to uncertain system behavior.

\begin{figure}[htbp]
    \centering
    \includegraphics[width=0.49\linewidth]{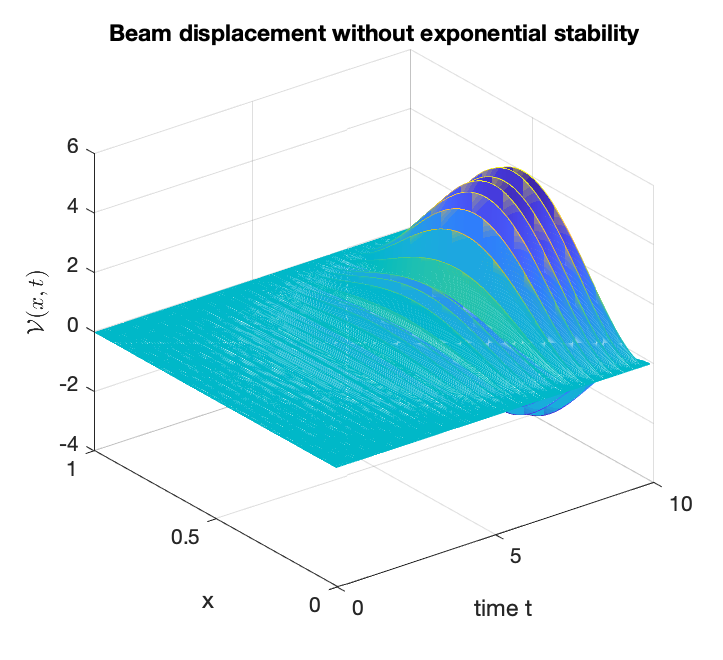}
    \includegraphics[width=0.49\linewidth]{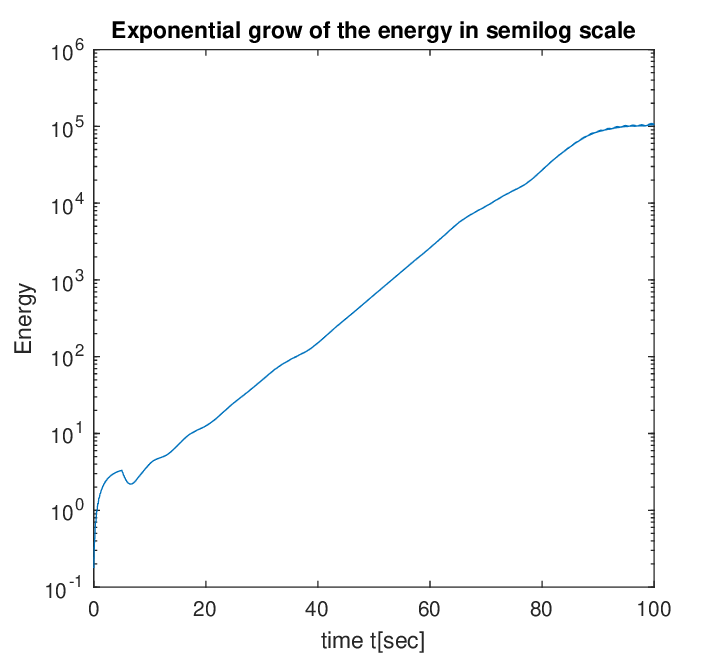}
    \caption{Violation of condition (A1). Left: displacement evolution. Right: exponential energy growth without blow-up.}
    \label{fig4}
\end{figure}
In the first scenario (Figure \ref{fig3}), we select parameters violating condition \eqref{eq4.1} (see Table \ref{table1}). The potential energy is $\frac{1}{2}\|\mathcal{V}_{xx}(\cdot,0)\|^2 = 8325.866906$, while the nonlinear energy contribution is $\frac{1}{5}\int_0^1|\mathcal{V}_0|^5 dx = 3206.641840$, yielding a positive initial energy $E(0) \approx 5119.225066$. Although $E(0) > 0$, a finite-time blow-up occurs just before $t = 0.15$, confirming that condition \eqref{eq4.1} is necessary to prevent blow-up even with positive initial energy.

The second scenario (Figure \ref{fig4}) examines violation of condition (A1) while maintaining $E(0) > 0$. Using parameters from Example 1 but with $a_1 = 1$ and $a_2 = 2$, condition (A1) is not satisfied. In this case, no blow-up is observed within $t = 100$ time units; instead, the energy exhibits exponential growth, indicating instability despite the positive initial energy.

Both cases illustrate uncertain dynamical behavior: the system neither achieves exponential stability (since the sufficient conditions are violated) nor exhibits the guaranteed blow-up predicted for $E(0) < 0$. These examples highlight the sensitivity of the system's long-term behavior to the precise satisfaction of the theoretical conditions.
\section*{Declarations}
\textbf{Ethical Approval} \\
Not applicable \\
\textbf{Availability of Data and Materials} \\
All data are available in manuscript. \\
\textbf{Funding} \\
M. Sepúlveda thanks Fondecyt-ANID project 1220869, and the support of
ANID-Chile through Centro de Modelamiento Matemático (FB210005).
\\
\textbf{Authors' Contributions} \\
All authors have approved the final version of the manuscript. \\
\textbf{Competing Interests} \\
The authors declare that they have no competing interests. \\
\textbf{Acknowledgments} \\
All authors have agreed to submit this version. 

\end{document}